\newtheorem{thm}{Theorem}[section]
\newtheorem*{question}{Question}
\newtheorem*{thm_a}{Theorem A}
\newtheorem*{thm_b}{Theorem B}
\newtheorem{prop}[thm]{Proposition}
\newtheorem{lem}[thm]{Lemma}
\newtheorem{cor}[thm]{Corollary}
\newtheorem{ex}[thm]{Example}
\theoremstyle{definition}
\newtheorem{df}[thm]{Definition}
\newtheorem{rem}[thm]{Remark}
\def\N{\mathbb{N}}
\def\Z{\mathbb{Z}}
\def\R{\mathbb{R}}
\def\ocap{\text{\rm ocap}}
\DeclareMathOperator{\cocap}{\ocap_\sharp}
\DeclareMathOperator{\htop}{h_{top}}
\DeclareMathOperator{\h}{h}
\DeclareMathOperator{\id}{Id}
\def\AA{\mathcal{A}}
\def\CC{\mathcal{C}}
\def\SS{\mathcal{S}}
\def\GG{\mathcal{G}}
\def\In{\text{\rm Int}^{\Phi}}
\def\Int{\text{\rm Int}}
\def\Pa{\partial^{\Phi}}
\def\q{{\bf q}}
\DeclareMathOperator{\per}{Per}
\DeclareMathOperator{\diam}{diam}
\DeclareMathOperator{\dist}{dist}
\title[Symbolic extensions for expansive topological flows]{Strongly isomorphic symbolic extensions for expansive topological flows}
\author{Yonatan Gutman}
\address{Institute of Mathematics, Polish Academy of Sciences, ul. \'Sniadeckich 8, 00-656 Warszawa, Poland}
\email{y.gutman@impan.pl}
\author{Ruxi Shi}
\address{Sorbonne Universit\'e, LPSM, 75005 Paris, France}
\email{ruxi.shi@upmc.fr}
\thanks{Y.G. was partially supported by the National Science Centre (Poland) grant
2020/39/B/ST1/02329. R.S was partially supported by Fondation Sciences Mathématiques de Paris.}
\begin{document}
\keywords{symbolic extension, topological
flow, strongly isomorphic, expansive,  small flow boundary, cross-section.}
\subjclass[2020]{37B10,
37A35.}	
	\maketitle

\begin{abstract}

In this paper, we prove that finite-dimensional topological flows without fixed points and having a countable number of periodic orbits, have the small flow boundary property. This enables us to answer positively  a question of Bowen and Walters from 1972: Any expansive topological flow has a strongly isomorphic symbolic flow extension, i.e. an extension by a suspension flow over a subshift. Previously Burguet had shown this is true if the flow is assumed to be $C^2$-smooth.
\end{abstract}


\tableofcontents

\section{Introduction}

Symbolic coding of dynamical systems $(X,T)$ in the form of measurable factor maps into shift spaces over finite alphabets $(X,T)\rightarrow (Y,\sigma)\subset (\{1,2,\ldots, a\}^\Z,\sigma)$ has played a prominent role in the theory of dynamical systems since its inception (\cite{hadamard1898surfaces,morse1921recurrent,milnor1988iterated}). Requiring the factor maps to be continuous is usually impossible. It is however meaningful to look for a symbolic system $(Y,\sigma)\subset (\{1,2,\ldots, a\}^\Z,\sigma)$ for which the original system occurs as a continuous factor $(Y,\sigma)\rightarrow (X,T)$. In other words this form of ``digitization" corresponds to symbolic extensions. However as $(Y,\sigma)$ is an extension of $(X,T)$ it is necessarily ``more complex" than $(X,T)$. Viewing $(Y,\sigma)$ as a model of $(X,T)$ one strives to minimize this ``complexity gap". This may be formalized mathematically by various conditions such as requiring the extension to be principal or strongly isomorphic\footnote{See Definition \ref{df:principal}}. The associated theory for $\Z$-systems is deep and extensive (\cite{BDz2004, downarowicz2005entropy,D11}). Recently a symbolic extension theory for $\mathbb{R}$-systems, i.e.\ \textit{topological flows} has been put forth (\cite{burguet2019symbolic}). The present paper is a further contribution in this direction, specifically  to the theory of symbolic extensions of expansive topological flows. Our  main tool is the \textit{small flow boundary property}.
 
 As a dynamical analog of Lebesgue covering dimension zero, the {\it small boundary property} for a $\mathbb{Z}$-system $(X,T)$ was introduced by Shub and Weiss in \cite{SW} who investigated the question under which conditions a given $\mathbb{Z}$-system has factors with strictly lower entropy. Later it was realized that the small boundary property has wider applicability. Notably Lindenstrauss and Weiss \cite{LW} showed that a $\mathbb{Z}$-system which has the small boundary property must have mean dimension zero. Moreover a $\mathbb{Z}$-system with the small boundary property has a zero-dimensional strongly isomorphic extension (\cite[p. 4338]{burguet2019symbolic} based on \cite{downarowicz2005entropy}). From \cite{L95} it follows that a finite-dimensional $\mathbb{Z}$-system without periodic points has the small boundary property\footnote{More generally, from \cite[Theorem 3.3]{L95} it follows that an infinite $\mathbb{Z}$-system ($|X|~=~\infty)$ with a finite number of periodic points has the small boundary property.}.

 Burguet \cite{burguet2019symbolic} introduced the {\it small flow boundary property} for topological flows as an analog to the small boundary property. He showed that flows with the small flow boundary property admit strongly isomorphic zero-dimensional extensions
 and gave  necessary and sufficient conditions for the existence of \textit{symbolic} extensions\footnote{A topological flow is said to admit a symbolic extension, respectively a zero-dimensional extension, if it has an extension by a suspension
flow over a subshift, respectively a zero dimensional system, with a positive continuous roof function.} for such  flows in terms of the existence of \textit{superenvelopes} (\cite[Theorem 3.6]{burguet2019symbolic}).
This can be seen as a certain generalization of the Boyle-Downarowicz symbolic extension entropy theorem (\cite{BDz2004}).
 Burguet \cite{burguet2019symbolic} showed that a $C^2$-flow without fixed points\footnote{Flows without fixed points are known as \emph{regular flows} but we will not use this terminology in this paper.} and such that for any $\tau>0$, the number of periodic orbits of period less than $\tau$ is finite has the small flow boundary property. In our main theorem we manage to remove the smoothness  assumption: 
\begin{thm_a}
Let $X$ be a compact finite-dimensional space.
    Let $\Phi$ be a topological flow on $X$ without fixed points, having a countable number of periodic orbits. Then  $(X, \Phi)$ has the small flow boundary  property.
\end{thm_a}

Expansive $\mathbb{Z}$-systems were introduced as early as 1950 by \cite{utz1950unstable}. In \cite{reddy1968lifting} and \cite{keybob} it was shown that expansive $\mathbb{Z}$-systems admit symbolic extensions. From the work of Boyle and Downarowicz \cite{BDz2004} it follows that an expansive $\mathbb{Z}$-system admits a symbolic extension of the same entropy.
The notion of expansiveness for flows was introduced by Bowen and Walters \cite{bowen1972expansive}. Bowen and Walters \cite{bowen1972expansive} proved that expansiveness is invariant under topological conjugacy, that the topological entropy of an expansive flow is finite and that all fixed points of an expansive flow are isolated.
In addition they constructed a symbolic extension for expansive flows. They
asked whether this symbolic extension preserves entropy. More precisely, they made use of closed cross-sections to build a symbolic extension 
and wondered if one could choose carefully these closed cross-sections so that the associated symbolic extension has the same topological entropy as the original system. Burguet \cite{burguet2019symbolic} gave a positive answer to this question for $C^2$-expansive flows. In this paper, we give an affirmative answer for all expansive flows.

\begin{thm_b}
 Let $(X, \Phi)$ be an expansive flow. Then it has a strongly isomorphic symbolic extension.
\end{thm_b}

\subsection*{Structure of the paper} In Section \ref{sec:Preliminaries}, we recall basic notions related to discrete dynamical systems and topological flows. In Section \ref{sec:Establishing the small flow boundary property}, we prove that finite-dimensional topological flows without fixed points, having a countable number of periodic orbits, satisfy the small flow boundary property (Theorem A). In Section \ref{sec:Expansive flows have strongly isomorphic symbolic extensions}, we recall the definition of expansive flows and Bowen-Walters construction of symbolic extensions for expansive flows, and show that any expansive topological flow has a strongly isomorphic symbolic extension (Theorem \ref{thm:thm B}). This answers an open question of Bowen and Walters \cite{bowen1972expansive}.  In the Appendix (Section \ref{sec:Appendix}), we review the construction of a complete family of cross-sections for a topological flow without fixed points, following Bowen and Walters.

\subsection*{Acknowledgements} We are grateful to David Burguet who told us about the main problem of the paper and conveyed to us his strong conviction in the feasibility of a positive solution (see also \cite[Remark 2.3]{burguet2019symbolic}).

\section{Preliminaries}\label{sec:Preliminaries}

\subsection{Notation}\label{subsec:notation}
Let $(X,\dist)$ be a metric space. Let $x\in S\subset X$. Denote by $B_S(x,\epsilon)=\{y\in S|\, \dist(x,y)<\epsilon\}$ the open $\epsilon$-ball in $S$ around $x\in S$. If $S$ is clear from the context, it may be omitted from the notation. Let $C=\{A_1,\ldots A_n\}$ be a set of sets $A_i\subset X$, $i=1,\ldots,n$. We denote by $\bigcup C=\cup_{i=1}^n A_i\subset X$ and $\bigcap C=\cap_{i=1}^n A_i\subset X$. Let $S,V\subset X$. Denote by $\partial_S V$ and $\Int_S V$  respectively the \textbf{boundary} and \textbf{interior} of $V$ w.r.t.\ the subspace topology induced by $S$.  Let $S\subset X$ be a closed subset and $Q\subset S$ be a subset in $S$. Fix $\epsilon>0$. The \textbf{open}, respectively \textbf{closed $\epsilon$-tube around $Q$} is the set $\Theta^S_{\epsilon}(Q):=\{y\in S|\, \dist(y,\overline{Q})< \epsilon\}$ respectively $\overline{\Theta}^S_{\epsilon}(Q):=\{y\in S|\, \dist(y,\overline{Q})\leq \epsilon\}$. Let $O\subset S$  be an open subset in $S$. The \textbf{open}, respectively \textbf{closed internal $(-\epsilon)$-tube of  $O$ inside $S$} is the set $\Theta^S_{-\epsilon}(O):=\{y\in O|\, \dist(y,S\setminus O)> \epsilon\}$ respectively $\overline{\Theta}^S_{-\epsilon}(O):=\{y\in O|\, \dist(y,S\setminus O)\geq \epsilon\}$. If $S$ is clear from the context, it may be omitted from the notation. 

\subsection{Discrete dynamical systems and topological flows}
A pair $(X,T)$ is called a {\bf (discrete) dynamical system} if $X$ is a compact metric space and $T:X\to X$ is a homeomorphism. For $Y$, a second countable metrizable space we denote by $\dim(Y)$ the Lebesgue covering dimension of $Y$.\footnote{For second countable normal spaces (metrizable by the Urysohn metrization theorem), the Lebesgue covering dimension equals the small inductive dimension  (see \cite[\S 6.2]{fedorchuk1990fundamentals}).} We use the convention $\dim(Y)=-1$ iff $Y=\emptyset$. 
\begin{df}
	A \textbf{topological flow} is a pair $(X, \Phi)$ where $X$ is a compact metrizable space and $\Phi: X\times \R \to X$ is a continuous flow on $X$, that is, the map $\Phi$ is continuous, $\Phi(\cdot, 0)$ is the identity map on $X$ and $\Phi(\Phi(x,t), s)=\Phi(x, t+s)$ for all $t,s\in \R$ and $x\in X$. For $t\in \R$, we sometimes use the notation $\Phi_t(x)=\Phi(x, t)$ and notice that  $\Phi_t: X\rightarrow X$ is a homeomorphism. In addition for $L\subset \R$, $S\subset X$ we denote $\Phi_{L}(S)=\{\Phi_t(x)\,|\, t\in L,\, x\in S\}$. Throughout the text, we fix a compatible metric $d$ on $X$. Given a set $\emptyset \not= A\subset X$ and $x\in X$, we define $\dist(x,A)=\inf_{y\in A} \dist(x,y)$, as well as $\dist(x,\emptyset)=\infty$.
\end{df}
\begin{df}
	A point $x\in X$ is called a \textbf{fixed point} if $\Phi_t(x)=x$ for all $t$.
A point $x\in X$ is a \textbf{periodic point} with \textbf{period} $\tau>0$ if $\Phi_\tau(x)=x$ and $\Phi_t(x)\not=x$ for any $0<t<\tau$. In the latter case, the set $\{\Phi_t(x)\,|\,0\leq t \leq \tau\}$ is called the \textbf{periodic orbit} associated with $x$.
\end{df}
\begin{df}
	The flow $(X, \Phi)$ is said to be \textbf{aperiodic} if it has no periodic orbits, that is, the equation $\Phi_t(x)=x$ implies $t=0$.
\end{df}

\subsection{Cross-sections}

\begin{df}\label{df:cross-section}
A \textbf{cross-section} of \textbf{injectivity time} $\eta>0$ is a subset $S\subset X$ such that the restriction of $\Phi$ on $S\times [-\eta, \eta]$ is one-to-one. The cross-section is said to be \textbf{global} if there is $\xi>0$ such that $\Phi(S\times [-\xi, \xi])=X$. The set $\Phi_{[-\eta, \eta]}(S)$ is called the \textbf{$\eta$-cylinder} associated with $S$.
\end{df}
\begin{rem}
In \cite{burguet2019symbolic} a closed cross-section $S$ such that the flow map  $\Phi: (x, t) \rightarrow \Phi_t(x)$ is a surjective local homeomorphism from $S \times \R$ to $X$, is called a 
\textit{Poincaré cross-section} and it is shown this is strictly stronger than $S$ being a closed and global cross-section.   
\end{rem}
\begin{df}
Let $(X, \Phi)$ be a topological flow. A finite family $\SS=\{S_i \}_{i=1}^{N}$ of disjoint closed cross-sections each of injectivity time $\eta$ is said \textbf{complete} if $\bigcup_{i=1}^{N}\Phi_{[-\frac{\eta}{2},\frac{\eta}{2}]}(S_i)=X$. It follows that $\GG=\cup_{i=1}^{N}S_i$ is a closed global cross-section.
\end{df}

A trivial observation is if $x\in S$ and $\Phi_t x\in S$ for some $t\not=0$, where $S$ is a cross-section of injectivity time $\eta>0$, then $|t|> 2\eta$ as otherwise  $\Phi_{-\frac t 2}S\cap \Phi_{\frac t 2}S\neq\emptyset$. 

\subsection{Flow boundaries and interiors}\label{subsec:Calculating flow boundaries and interiors}

As we will see in the sequel, the cylinders associated with cross-sections will play a fundamental role in the analysis of topological flows. The following definition which originates in \cite[Definition 3]{bowen1972expansive} is based upon \cite[Lemma 2.1]{burguet2019symbolic}.

\begin{df}\label{def:Flow boundaries and interiors}
Let $U$ be a set contained in a closed cross-section  of injectivity time $\eta$.
The \textbf{flow interior} $\In(U)\subset U$ of $U$ is the unique set obeying:

$$\Phi_{(-\eta, \eta)}(\boldsymbol{{\rm Int}^{\Phi}(U)})={\rm Int}(\Phi_{[-\eta, \eta]}(U))$$

The \textbf{flow boundary} $\partial^{\Phi}U$ of $U$ is the unique set obeying:
$$\partial \Phi_{[-\eta, \eta]}(U)=\Phi_{-\eta}(U)\sqcup \Phi_{\eta}(U)\sqcup \Phi_{(-\eta, \eta)}( \boldsymbol{ \partial^{\Phi}U})$$

 One can show that for every $0<\gamma<\eta$:
$$
\In(U)=\text{Int}(\Phi_{[-\gamma, \gamma]}(U))\cap U,
$$ 
$$
\partial^{\Phi}U=\overline{U}\setminus \In(U).
$$
\end{df}
Note that $\partial^{\Phi}U$ is closed as it can be written as 
$\partial^{\Phi}U=\overline{U}\setminus \text{Int}(\Phi_{[-\gamma, \gamma]})$ 
for any $0<\gamma<\eta$ (recall that $\overline{U}$ is the  closure of $U$ w.r.t.\ the space $X$).
We remark that under certain circumstances the above notions coincide with the classical notions of interior and boundary under the induced topology on the cross-section. See Proposition \ref{prop:natural boundary}.

\subsection{Good cross-sections exist}\label{subsec:Good subsections exist}

Here are several important facts regarding cross-sections:

\begin{thm}(Whitney)\label{thm:Whitney} \cite[page 270]{whitney1933regular} 
Let $(X,\Phi)$ be a topological flow without fixed points, then for each $x\in X$ there is a closed cross-section $S$  such that $x\in \In S$.
\end{thm}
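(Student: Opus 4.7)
The plan is to construct a continuous auxiliary function $F$ on $X$ which is strictly decreasing along orbits in a neighborhood of $x$, and then take $S$ to be a local level set $\{F=F(x)\}$ intersected with a small closed ball around $x$. Strict monotonicity of $F$ along orbits will force this level set to meet each nearby orbit transversally in a single point, which will immediately yield both the cross-section property (injectivity of the flow map on $S\times[-\eta,\eta]$) and the flow interior containment $x\in\In(S)$.

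To build $F$, first pick $\tau>0$ with $\Phi_\tau(x)\neq x$, which is possible by the absence of fixed points; in the periodic case one additionally takes $\tau$ less than half the minimal period of $x$. Then choose a continuous Urysohn function $g\colon X\to[0,1]$ with $g(x)=1$ and $\mathrm{supp}(g)$ contained in a small open neighborhood $V_0$ of $x$ whose image under $\Phi_\tau$ is disjoint from $V_0$, so that $g\equiv 0$ near $\Phi_\tau(x)$. Set
$$F(y):=\int_0^\tau g(\Phi_u(y))\,du,\qquad\text{so}\qquad F(\Phi_s(y))=\int_s^{s+\tau} g(\Phi_u(y))\,du.$$
For $s_1<s_2$ small one has
$$F(\Phi_{s_1}(y))-F(\Phi_{s_2}(y))=\int_{s_1}^{s_2}g(\Phi_u(y))\,du-\int_{s_1+\tau}^{s_2+\tau}g(\Phi_u(y))\,du.$$
By continuity of $(y,u)\mapsto g(\Phi_u(y))$ together with $g(x)=1$ and $g(\Phi_\tau(x))=0$, I would produce a neighborhood $W\ni x$ and $\delta>0$ such that the first integrand exceeds $3/4$ and the second is below $1/4$ whenever $y\in W$ and $|u|\leq\delta$; the displayed difference is then at least $(s_2-s_1)/2>0$, so $s\mapsto F(\Phi_s(y))$ is strictly decreasing on $[-\delta,\delta]$ for every $y\in W$.

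With monotonicity in hand, I would choose $r'>0$ so small that $\overline{B(x,r')}\subset W$ and $\Phi_{[-\delta,\delta]}(\overline{B(x,r')})\subset W$, define $S:=\overline{B(x,r')}\cap F^{-1}(F(x))$ (closed in $X$), and set $\eta:=\delta/2$. Injectivity on $S\times[-\eta,\eta]$ then follows because $\Phi_{t_1}(y_1)=\Phi_{t_2}(y_2)$ with $y_i\in S$ and $|t_i|\leq\eta$ gives $F(\Phi_{t_1-t_2}(y_1))=F(y_2)=F(y_1)$, forcing $t_1=t_2$ by strict monotonicity along the orbit through $y_1\in W$, and hence $y_1=y_2$. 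For the flow interior, any $z$ sufficiently close to $x$ has $F(\Phi_{-\eta}(z))>F(x)>F(\Phi_\eta(z))$ by continuity and the monotonicity at $x$, so the intermediate value theorem yields $s\in(-\eta,\eta)$ with $\Phi_s(z)\in F^{-1}(F(x))\cap\overline{B(x,r')}=S$; thus $\Phi_{(-\eta,\eta)}(S)$ contains an open neighborhood of $x$, whence $x\in\In(S)$ by the characterization in Definition \ref{def:Flow boundaries and interiors}. The main obstacle is coordinating the nested constants $\tau,r',\delta,\eta$ so that strict monotonicity along orbits, confinement of short orbit segments inside $W$, and the support separation $\mathrm{supp}(g)\cap\Phi_\tau(\mathrm{supp}(g))=\emptyset$ hold simultaneously; each ingredient is an elementary appeal to uniform continuity, but the bookkeeping must be done carefully, particularly in the periodic case where $\tau$ is constrained to avoid the minimal period.
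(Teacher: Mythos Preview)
Your approach is correct and essentially the same as the paper's: both construct a continuous function that is strictly monotone along orbits near $x$ by time-averaging a suitable continuous function along the flow, then take a local level set as the cross-section. The only cosmetic difference is the choice of integrand --- the paper averages the distance $d(\Phi_s(y),x)$ over $s\in[0,1]$ and uses the resulting derivative $\theta'(y)=d(\Phi_1(y),x)-d(y,x)>0$ at $y=x$, whereas you average a Urysohn bump $g$ over $[0,\tau]$ and estimate the difference $F(\Phi_{s_1}(y))-F(\Phi_{s_2}(y))$ directly; the paper then describes $S$ as the image of a closed neighborhood under the hitting-time map while you write it as $\overline{B(x,r')}\cap F^{-1}(F(x))$, but these amount to the same local level set.
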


\begin{proof}(sketch)
Fix $x\in X$. For $y\in X$, define $$\theta(y)=\int_0^1 \dist(\Phi_s(y),x) ds,$$
where $d$ is the metric on $X$ compatible with the topology.
It is easy to see that for fixed $y\in X$ the function $t\mapsto \theta(\Phi_t(y))$ has a continuous derivative. We denote the derivative at $t=0$ by $\theta'(y)$. An easy calculation shows:
$$\theta'(y)=\dist(\Phi_1(y),x)-\dist(y,x).$$
Assume w.l.o.g.\ that $\Phi_1(x)\not= x$. Thus $\theta'(x)=\dist(\Phi_1(x),x)>0$. Using the inverse function theorem we find $\ell>0$ such that  $\theta(\Phi_{-\ell}(x))<\theta(x)<\theta(\Phi_{\ell}(x))$ and $\theta'(\Phi_{t}(x))>0$ for all $-2\ell\leq t\leq 2\ell$. Using the continuity of $\Phi$, $\theta$ and $\theta'$ we may find an open neighborhood $U$ of $x$ such that for all $y\in \overline{U}$, $\theta'(\Phi_{t}(y))>0$ for all $-2\ell\leq t\leq 2\ell$ and:
$$\theta(\Phi_{-\ell}(y))<\theta(x)<\theta(\Phi_{\ell}(y)).$$
Thus for every $y\in \overline{U}$ there is a unique $-\ell<t_y<\ell$ such that $\theta(\Phi_{t_y}(y))=\theta(x)$. It is not hard to show that the set 
$S=\{\Phi_{t_y}(y)\,|\, y\in \overline{U} \}$ is a closed cross-section such that $x\in S\cap U\subset \In S$. Indeed $\ell>0$ is clearly an injectivity time for $S$. Moreover $U\subset \Phi_{[-\ell, \ell]}(S)$, as if $t_y<0$ then $t_y+\ell>0$ and if $t_y>0$ then $t_y-\ell<0$. 
\end{proof}
Based on the previous theorem it is possible to prove: 
\begin{thm}(Bowen \& Walters)
A topological flow without fixed points admits a complete family of (closed) cross-sections.
\end{thm}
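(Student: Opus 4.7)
The plan is to combine Whitney's theorem with a compactness argument and an inductive disjointification. First, for each $x \in X$, Whitney's theorem yields a closed cross-section $W_x$ with $x \in \In W_x$ of some injectivity time $\eta_x > 0$. By Definition \ref{def:Flow boundaries and interiors}, the set $\Phi_{(-\eta_x, \eta_x)}(\In W_x)$ is an open neighborhood of $x$ in $X$. Compactness of $X$ then supplies finitely many such cross-sections $W_1, \ldots, W_M$ whose $\eta_i$-cylinders cover $X$. Setting $\eta := \min_i \eta_i$ and replacing each $W_i$ by a closed sub-cross-section inside $\In W_i$, we obtain a finite family of closed cross-sections with a common injectivity time $\eta$ whose $(\eta/2)$-cylinders still cover $X$. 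However, these cross-sections need not be pairwise disjoint, nor even share all their intersection data in a controlled way.

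The main task is then the disjointification: to produce from $W_1, \ldots, W_M$ closed cross-sections $S_1, \ldots, S_N$ with a common (possibly smaller) injectivity time $\eta' > 0$, pairwise disjoint, whose $(\eta'/2)$-cylinders still cover $X$. A crucial preliminary observation is that for every orbit $\mathcal{O}$, the intersection $\mathcal{O} \cap \bigcup_i W_i$ is a discrete subset of $\mathcal{O}$: within a single closed cross-section the injectivity time $\eta$ forces consecutive hits to be separated by time at least $2\eta$, and the closedness of each $W_j$ prevents points of one cross-section from being accumulated by another along the same orbit (otherwise a limit argument would place the accumulation point in $W_j$, violating the injectivity time). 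Exploiting this discreteness, one trims each $W_i$ inductively by removing the portion whose orbit has already been captured by some $W_j$ with $j < i$ within a short time window, defining successively $S_1 \subset W_1$, $S_2 \subset W_2$, and so on.

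The natural main obstacle is that the naive set-theoretic trimming $W_i \setminus \Phi_{(-\eta, \eta)}(\bigcup_{j<i} S_j)$ removes an open set from a closed set and thus yields only a relatively open subset of $W_i$, not a closed cross-section. One therefore performs the trimming with a small buffer, removing a closed flow-tube around the previously-used cross-sections and simultaneously passing to closed subsets of the flow interiors, so that the remainders are both closed and retain the cross-section property (with slightly reduced injectivity time). Simultaneously verifying that the resulting $S_i$ are closed, pairwise disjoint, and still satisfy $\bigcup_{i=1}^N \Phi_{[-\eta'/2,\eta'/2]}(S_i) = X$ is where the bulk of the technical work lies; this balancing act between shrinking enough to separate the cross-sections and retaining enough to preserve coverage is the heart of the Bowen--Walters construction carried out in detail in the appendix.
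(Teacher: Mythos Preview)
The opening steps (Whitney plus compactness, uniform injectivity time) match the paper. However, your disjointification differs fundamentally from the paper's (and Bowen--Walters') construction. You propose to \emph{trim}: pass to closed subsets $S_i\subset W_i$. The construction in Lemma~\ref{lem:complete family appendix} instead achieves disjointness by \emph{time-shifting} small pieces along the flow. After refining to small closed cross-sections $V_{y_1},\ldots,V_{y_k}$ of diameter $<\epsilon$ whose $\epsilon$-cylinders cover $X$, one proceeds inductively: given the pairwise disjoint family $\mathcal{S}_{i-1}'$ already built, for each $y\in V_{y_i}'$ the orbit segment $\Phi_{[-\epsilon,\epsilon]}(y)$ meets $\bigcup\mathcal{S}_{i-1}'$ in only finitely many points, so there is an open interval $I_y\subset(-\epsilon,\epsilon)$ and a closed neighbourhood $W_y'$ of $y$ in $V_{y_i}'$ with $\Phi_{I_y}(W_y')\cap\bigcup\mathcal{S}_{i-1}'=\emptyset$. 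Covering $V_{y_i}'$ by finitely many such $W_{y^j}'$ and choosing distinct $\rho_j\in I_{y^j}$, one adjoins the flow-translates $\Phi_{\rho_j}(W_{y^j})$ to the family. These are closed (homeomorphic images of closed sets), disjoint from the earlier pieces by choice of $\rho_j$, and disjoint from one another because the $\rho_j$ are distinct while the injectivity time exceeds $2\epsilon$. The resulting cross-sections are \emph{not} subsets of the original $V_{y_i}$; no trimming occurs.

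Your trimming route has a genuine gap. The sentence claiming that $W_i\setminus\Phi_{(-\eta,\eta)}(\bigcup_{j<i}S_j)$ ``removes an open set from a closed set and thus yields only a relatively open subset'' is self-contradictory (removing an open set from a closed set leaves a closed set). The actual obstruction is that $\Phi_{(-\eta,\eta)}(S_j)$ is in general \emph{neither} open nor closed in $X$: only $\Phi_{(-\eta,\eta)}(\In S_j)$ is open (Definition~\ref{def:Flow boundaries and interiors}), so removing that leaves a closed set which may still meet $\Pa S_j$, while removing the closed tube $\Phi_{[-\eta,\eta]}(S_j)$ leaves a set that is only relatively open in $W_i$. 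Your proposed ``buffer'' does not explain how to resolve this three-way tension among closedness, disjointness, and coverage, and the appendix carries out no such argument. The time-shifting device is precisely what sidesteps the difficulty: a flow-translate of a closed set is automatically closed, and the freedom in the shift amount furnishes disjointness without any set-theoretic surgery on the cross-sections themselves.
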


\begin{proof}
See  the proof of  Lemma \ref{lem:complete family appendix} where a stronger result is proven. 
\end{proof}

\begin{lem}\label{lem:dim_cross-section}
Let $(X, \Phi)$ be a topological flow with $\dim(X)=n\geq 1$. Let $S\subset X$ be a closed cross-section. Then $\dim(S)\leq n-1$. If in addition $S$ is a global cross-section then $\dim(S)= n-1$.
\end{lem}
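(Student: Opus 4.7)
The plan is to exploit the fact that $S$ crossed with a short closed time-interval embeds homeomorphically into $X$ via the flow map, and then combine this with the classical product formula for the Lebesgue covering dimension with an interval. First I would dispatch the trivial case $S = \emptyset$ (for which $\dim(S) = -1 \le n-1$); in the global setting $S$ cannot be empty because $X$ is non-empty. Assuming $S \neq \emptyset$, the set $S$ is compact (closed in compact $X$), and the map $\Phi : S \times [-\eta, \eta] \to X$, $(s, t) \mapsto \Phi_t(s)$, is continuous and injective by Definition \ref{df:cross-section}. Hence it restricts to a homeomorphism onto its compact image $\Phi_{[-\eta, \eta]}(S) \subset X$, and subspace monotonicity of the Lebesgue covering dimension yields
$$
\dim(S \times [-\eta, \eta]) \;=\; \dim(\Phi_{[-\eta, \eta]}(S)) \;\le\; \dim(X) \;=\; n.
$$

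Next I would invoke the classical product identity $\dim(S \times [-\eta, \eta]) = \dim(S) + 1$, valid for every non-empty compact metric $S$ paired with a non-degenerate closed interval. This forces $\dim(S) \le n-1$, which is the first assertion.

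For the global case, fix $\xi > 0$ with $\Phi_{[-\xi, \xi]}(S) = X$ and partition $[-\xi, \xi]$ into finitely many closed subintervals $I_1, \ldots, I_m$ each of length at most $\eta$. Then $X = \bigcup_{j=1}^m \Phi_{I_j}(S)$ is a finite cover by closed sets. Since the cross-section injectivity persists on any time-interval of length $\le \eta$ (a one-line check: $\Phi_t(s) = \Phi_{t'}(s')$ with $s, s' \in S$ and $|t - t'| \le \eta$ implies $\Phi_{t-t'}(s) = s' \in S$, so $(s, t-t')$ and $(s', 0)$ agree in $S \times [-\eta, \eta]$), each $\Phi : S \times I_j \to \Phi_{I_j}(S)$ is a homeomorphism, and hence $\dim(\Phi_{I_j}(S)) = \dim(S) + 1$. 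The finite sum theorem for Lebesgue covering dimension (for closed subsets in a normal space) then gives
$$
n \;=\; \dim(X) \;\le\; \max_{1 \le j \le m} \dim(\Phi_{I_j}(S)) \;=\; \dim(S) + 1,
$$
so $\dim(S) \ge n-1$, and combined with the first part $\dim(S) = n-1$.

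The main obstacle is the invocation of the product identity $\dim(S \times I) = \dim(S) + 1$: for arbitrary separable metric factors only $\le$ is automatic (Pontryagin's well-known counterexample shows equality can fail in general), but equality does hold as soon as one factor is a non-degenerate polyhedron, in particular a closed interval. This is a standard result in dimension theory that I would cite directly rather than reprove.
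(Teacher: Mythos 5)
Your proof is correct. The first half (the upper bound $\dim(S)\le n-1$) is exactly the paper's argument: embed $S\times[-\eta,\eta]$ into $X$ via the flow and invoke Hurewicz's identity $\dim(S\times I)=\dim(S)+1$ for compact $S$; your explicit handling of $S=\emptyset$ and your remark that Pontryagin's example does not obstruct the interval case are fine and consistent with what the paper tacitly uses.

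For the global case your route genuinely differs. The paper keeps the single map $f:S\times[-\xi,\xi]\to X$, observes it is a closed surjection with countable (in fact finite, by $2\eta$-separation of return times) fibers, and applies Hurewicz's dimension-lowering theorem for closed maps (\cite[Theorem 1.12.4]{engelking1995theory}) together with the product inequality to get $n\le\dim(S)+1$. You instead chop $[-\xi,\xi]$ into subintervals of length at most $\eta$, check (correctly — your one-line injectivity argument is valid) that the flow map is a homeomorphism on each piece $S\times I_j$, and conclude via the finite closed sum theorem that $n=\dim(X)\le\max_j\dim(\Phi_{I_j}(S))=\dim(S)+1$. Your version is more elementary in that it avoids the dimension-lowering theorem entirely, replacing it by the sum theorem (which the paper already quotes as Theorem \ref{thm:R3}) plus a second application of the same product identity; the paper's version is shorter on the page but leans on a heavier tool and leaves the countability of the fibers unexplained. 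Both are complete proofs.
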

\begin{proof}
As $S\times [-\eta, \eta]$ is homeomorphic to a subset of $X$, one has $\dim(S\times [-\eta, \eta])\leq n$. By a theorem of Hurewicz in \cite{hurewicz1935dimension}, as $S$ is compact, $\dim(S)\leq n-1$. Now assume in addition that  $S$ is a global cross-section. Let $\xi>0$ such that the natural continuous map $f: S\times [-\xi,\xi]\rightarrow X$ given by $(x,t)\mapsto \Phi(x,t)$ is surjective. Note  that $f$ is a closed map between two metric separable spaces and for every $x\in X$, $f^{-1}(x)$ is countable. Thus by \cite[Theorem 1.12.4]{engelking1995theory} 
$$\dim(X)\leq \dim(S\times[-\xi,\xi])+\sup_{x\in X}\dim(f^{-1}(x))\le  \dim(S)+1+0,
$$
where we used the product theorem for non-empty metric separable spaces $A$ and $B$ $\dim(A\times B)\leq \dim(A)+\dim(B)$ (\cite[Theorem 1.5.16]{engelking1995theory}).
Thus $\dim(S)\geq n-1$ as desired. 
\end{proof}

We remark that one can not replace the assumption ``global cross-section S” by ``a cross-section with $\In(S)\not=\emptyset$". A counter-example is that $X$ is a disjoint union of $X_1$ and $X_2$ with $\dim(X_1)>\dim(X_2)$, $\Phi$ is a continuous flow on $X$ without fixed points and $S\subset X_2$ is a cross-section with $\In(S)\not=\emptyset$. Then it is clear that $\dim(S)\le \dim(X_2)-1<\dim(X)-1$.

\subsection{Calculating with flow interiors and boundaries}\label{subsec:Calculating}

Let $S,V\subset X$. Recall that $\partial_S V$ and $\Int_S V$  denote respectively the boundary and interior of $V$ w.r.t.\ the subspace topology induced by $S$.

\begin{lem}[\cite{burguet2019symbolic}, Lemma 2.2]\label{lem:partial in induced topo}
	Let $S$ be a set contained in a closed cross-section. Let $U\subset S$. Then 
	$$
	\partial_S U \subset \Pa U \subset \partial_S U \cup \Pa S.
	$$

\end{lem}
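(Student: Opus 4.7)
The plan is to unwind both sides using the working formula $\In(U)=\Int(\Phi_{[-\gamma,\gamma]}(U))\cap U$, valid for any $0<\gamma<\eta$, where $\eta$ is an injectivity time of a closed cross-section $S'\supset S$. Note that since $S'$ is closed and $U\subset S\subset S'$, one has $\overline{U}\subset S'$ (closures taken in $X$), so $\overline{U}^S=\overline{U}\cap S$. The key geometric tool is that $\Phi|_{S'\times[-\eta,\eta]}$ is continuous, injective, and defined on the compact set $S'\times[-\eta,\eta]$, hence a homeomorphism onto its image; inverting the $S'$-coordinate gives a continuous projection $P\colon\Phi_{[-\gamma,\gamma]}(S')\to S'$ characterized by $P(\Phi_t(s))=s$.

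For the first inclusion $\partial_S U\subset\Pa U$, take $x\in\partial_S U$. Then $x\in\overline{U}^S\subset\overline{U}$, so it suffices to show $x\notin\In(U)$. Suppose for contradiction that $x\in\In(U)$, and choose an open $V\ni x$ in $X$ with $V\subset\Phi_{[-\gamma,\gamma]}(U)$. For any $y\in V\cap S$ write $y=\Phi_t(u)$ with $u\in U\subset S'$ and $|t|\le\gamma<\eta$; since also $y\in S\subset S'$, the injectivity of $\Phi|_{S'\times[-\eta,\eta]}$ forces $t=0$ and $y=u\in U$. Thus $V\cap S\subset U$, which places $x\in\Int_S U$, contradicting $x\in\partial_S U$.

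For the second inclusion $\Pa U\subset\partial_S U\cup\Pa S$, take $x\in\Pa U$ and assume $x\notin\Pa S$. Since $x\in\overline{U}\subset\overline{S}$, this forces $x\in\In(S)\subset S$; in particular there is an open $W\ni x$ in $X$ with $W\subset\Phi_{[-\gamma,\gamma]}(S)$. Moreover $x\in\overline{U}\cap S=\overline{U}^S$, so it remains to rule out $x\in\Int_S U$. If $x\in\Int_S U$, pick an open $V\ni x$ in $X$ with $V\cap S\subset U$. Using continuity of $P$ at $x$ (where $P(x)=x\in V$), choose an open $V''\subset X$ with $x\in V''$ and $V''\cap\Phi_{[-\gamma,\gamma]}(S')\subset P^{-1}(V)$. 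Set $N=V\cap W\cap V''$, open in $X$ and containing $x$. For any $y\in N$, the inclusion $y\in W\subset\Phi_{[-\gamma,\gamma]}(S)$ yields $y=\Phi_t(s)$ with $s\in S$, $|t|\le\gamma$, and then $s=P(y)\in V\cap S\subset U$, so $y\in\Phi_{[-\gamma,\gamma]}(U)$. Hence $x\in\Int(\Phi_{[-\gamma,\gamma]}(U))$; combined with $x\in U$ this gives $x\in\In(U)$, contradicting $x\in\Pa U$.

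The main obstacle is the second inclusion, where a purely subspace-topological assumption on $U\subset S$ must be upgraded to a flow-saturated open neighborhood of $x$ inside $\Phi_{[-\gamma,\gamma]}(U)$. The continuous projection $P$, which exists precisely because $S$ sits inside a \emph{closed} cross-section $S'$ (allowing the compact--plus--injective argument), is the bridge that makes this promotion possible.
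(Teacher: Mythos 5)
Your proof is correct. The paper does not prove this lemma at all --- it is quoted verbatim from Burguet (\cite{burguet2019symbolic}, Lemma 2.2) --- so there is nothing internal to compare against; your argument, resting on the fact that $\Phi$ restricted to $S'\times[-\eta,\eta]$ is a homeomorphism onto its image (compact plus injective plus continuous) and on the resulting continuous projection $P$, is the natural and expected route, and both inclusions are carried out without gaps.
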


\begin{ex}
	Consider a minimal rotation on the torus $(\mathbb{T}^2, \Phi)$. Identify $\mathbb{T}^2=[0,1]^2$ in the usual way. The closed set $S=[A,B]\times\{0\}$, where $0<A<B<1$ is a global cross-section. Let $A<C<B$ and $U=[C,B]\times\{0\}$ be a strict subset of $S$. Then $\partial_S U = \{C\}\times\{0\}$,  $ \Pa U =\{C, B\}\times\{0\} $ and $ \Pa S=\{A,B \}\times\{0\}$. This is an example for which the inclusion relations for sets in Lemma \ref{lem:partial in induced topo} are strict.
\end{ex}


\begin{lem}\label{lem:flow boundary formula}
	Let $S$ be a set contained in a closed cross-section. Let $U\subset S$. Then $\Pa U=\partial_S U \cup (\overline{U}\cap \Pa S)$.
\end{lem}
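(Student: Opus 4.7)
The plan is to prove two inclusions, using Lemma 2.2 (which gives the weaker two-sided inclusion $\partial_S U \subset \Pa U \subset \partial_S U \cup \Pa S$) together with the concrete formulas $\Pa U = \overline{U}\setminus \In(U)$ and $\In(U)=\Int(\Phi_{[-\gamma,\gamma]}(U))\cap U$ recalled in Definition~\ref{def:Flow boundaries and interiors}.

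First I would establish $\Pa U \subset \partial_S U \cup (\overline{U}\cap \Pa S)$. Starting from the Lemma 2.2 upper bound $\Pa U \subset \partial_S U \cup \Pa S$, I would intersect both sides with $\overline{U}$. The left side is unchanged because $\Pa U \subset \overline U$ directly from the formula $\Pa U = \overline U \setminus \In(U)$. On the right, distributing the intersection gives $(\overline U \cap \partial_S U) \cup (\overline U \cap \Pa S)$, and the first term simplifies to $\partial_S U$ since $\partial_S U \subset \overline_S U \subset \overline U$. This yields the desired inclusion.

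For the reverse inclusion $\partial_S U \cup (\overline U \cap \Pa S) \subset \Pa U$, the containment $\partial_S U \subset \Pa U$ is already in Lemma 2.2, so the only thing to check is $\overline U \cap \Pa S \subset \Pa U$. Fix $x \in \overline U \cap \Pa S$; since $x \in \overline U$ it suffices to show $x \notin \In(U)$. If $x \notin U$, this is immediate from $\In(U) \subset U$. If $x \in U$, I would use the monotonicity $U \subset S \Rightarrow \Phi_{[-\gamma,\gamma]}(U) \subset \Phi_{[-\gamma,\gamma]}(S)$, which gives $\Int(\Phi_{[-\gamma,\gamma]}(U)) \subset \Int(\Phi_{[-\gamma,\gamma]}(S))$. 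Because $x \in \Pa S$ means $x \notin \Int(\Phi_{[-\gamma,\gamma]}(S))$, we conclude $x \notin \Int(\Phi_{[-\gamma,\gamma]}(U))$, hence $x \notin \In(U)$, as needed.

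I do not expect a serious obstacle here; the proof is essentially a short set-theoretic manipulation once one recognizes that the role of $\overline U$ in the formula comes from the identity $\Pa U \subset \overline U$ and the role of the monotonicity comes from the fact that enlarging a subset of a cross-section enlarges its cylinder. The only mild subtlety is the case split on whether $x \in U$ when verifying $\overline U \cap \Pa S \subset \Pa U$, which is handled cleanly using that $\In(U)$ is the intersection with $U$ of an interior that is monotone in $U$.
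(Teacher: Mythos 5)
Your proof is correct and follows essentially the same route as the paper: the forward inclusion by intersecting the upper bound of Lemma~\ref{lem:partial in induced topo} with $\overline{U}$, and the reverse inclusion by showing $\overline{U}\cap\Pa S\subset\Pa U$ via the monotonicity $\In U\subset\In S$ (which the paper states as the one-line computation $\overline{U}\cap(\overline{S}\setminus\In S)\subset\overline{U}\setminus\In U$, while you spell it out pointwise with a harmless case split on $x\in U$).
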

\begin{proof}
	By Lemma \ref{lem:partial in induced topo} and Definition \ref{def:Flow boundaries and interiors} it follows on the one hand that $\Pa U \subset (\partial_S U \cup \Pa S)\cap \overline{U}=\partial_S U \cup (\overline{U}\cap \Pa S)$. On the other hand, since $\overline{U}\cap \Pa S =\overline{U}\cap(\overline{S}\setminus \In S)\subset \overline{U}\setminus \In U=\Pa U$, we obtain by Lemma \ref{lem:partial in induced topo} that $\partial_S U \cup (\overline{U}\cap \Pa S)\subset \Pa U$.
\end{proof}

\begin{lem}\label{lem:In U=U}
	Let $(X,\Phi)$ be a topological flow. Let $U\subset S$ be two subsets of a closed cross-section. Then $\In U=U$ implies that $U$ is relatively open in $S$. If $U \cap \Pa S=\emptyset$, then the converse holds, i.e.\ $U$ relatively open in $S$ implies $\In U=U$.
\end{lem}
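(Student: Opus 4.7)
The plan is to work from the formula $\In U=\text{Int}(\Phi_{[-\gamma,\gamma]}(U))\cap U$ for any $0<\gamma<\eta$, where $\eta$ is the injectivity time of the cross-section, and to exploit the injectivity of $\Phi$ on $S\times[-\eta,\eta]$ in each of the two directions. The trivial observation that points of $S$ whose orbit returns to $S$ must travel for time greater than $2\eta$ will be the key combinatorial input.

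For the forward implication, suppose $\In U=U$. Fix $0<\gamma<\eta$ and any $x\in U$. Then $x\in\text{Int}(\Phi_{[-\gamma,\gamma]}(U))$, so there is an open neighborhood $W$ of $x$ in $X$ with $W\subset \Phi_{[-\gamma,\gamma]}(U)$. I claim $W\cap S\subset U$, which will show that $U$ is relatively open in $S$. If $y\in W\cap S$, write $y=\Phi_t(u)$ with $u\in U\subset S$ and $|t|\le\gamma<\eta$. Since $y,u\in S$ and $|t|<\eta<2\eta$, the injectivity of $\Phi$ on $S\times[-\eta,\eta]$ forces $t=0$, hence $y=u\in U$.

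For the reverse implication, assume $U$ is relatively open in $S$ and that $U\cap\Pa S=\emptyset$. Pick $x\in U$; I want to show $x\in\text{Int}(\Phi_{[-\gamma,\gamma]}(U))$ for some $0<\gamma<\eta$, which will yield $x\in\In U$. The hypothesis $U\cap\Pa S=\emptyset$ gives $x\in\In S$, so $x$ has an open neighborhood $W\subset\Phi_{[-\gamma,\gamma]}(S)$. The complement $S\setminus U$ is closed in $S$, and since $S$ is closed in $X$ (and compact), the set $\Phi_{[-\gamma,\gamma]}(S\setminus U)$ is compact. I claim $x\notin\Phi_{[-\gamma,\gamma]}(S\setminus U)$: otherwise $x=\Phi_t(z)$ with $z\in S\setminus U$ and $|t|\le\gamma<\eta$, and the same injectivity argument as above forces $t=0$ and $x=z\in S\setminus U$, contradicting $x\in U$. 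Hence there is an open neighborhood $V$ of $x$ in $X$ disjoint from $\Phi_{[-\gamma,\gamma]}(S\setminus U)$. Then $V\cap W\subset\Phi_{[-\gamma,\gamma]}(S)\setminus\Phi_{[-\gamma,\gamma]}(S\setminus U)\subset\Phi_{[-\gamma,\gamma]}(U)$, establishing $x\in\text{Int}(\Phi_{[-\gamma,\gamma]}(U))$.

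I do not anticipate a substantive obstacle; the only delicate point is seeing that in the converse direction one really needs $U\cap\Pa S=\emptyset$ to ensure that a small neighborhood of $x$ is already trapped inside the cylinder $\Phi_{[-\gamma,\gamma]}(S)$, since otherwise nearby orbits might exit $S$ entirely within time $\gamma$ and there would be no way to pull points of the ambient neighborhood back to $U$. Once that neighborhood is secured, the cross-section injectivity argument excludes the ``bad'' translates of $S\setminus U$ and finishes the proof.
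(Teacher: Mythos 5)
Your proof is correct. The forward implication is essentially the paper's argument made explicit: both hinge on the inclusion $\Phi_{[-\gamma,\gamma]}(U)\cap S\subset U$, which you derive carefully from the injectivity of $\Phi$ on $S\times[-\eta,\eta]$ and which the paper simply cites as clear from the definition of a cross-section. For the converse, however, you take a genuinely different route. The paper reduces the claim to showing $\Pa U\cap U=\emptyset$ and then invokes the identity $\Pa U=\partial_S U\cup(\overline{U}\cap \Pa S)$ of Lemma \ref{lem:flow boundary formula} (which in turn rests on Lemma \ref{lem:partial in induced topo}), so that the hypotheses ``$U$ relatively open'' and ``$U\cap\Pa S=\emptyset$'' kill the two terms separately. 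You instead verify $U\subset\Int(\Phi_{[-\gamma,\gamma]}(U))$ directly from the definition: the hypothesis $U\cap\Pa S=\emptyset$ places $x\in\In S$ and hence traps a neighborhood of $x$ inside the cylinder $\Phi_{[-\gamma,\gamma]}(S)$, and a separation from the compact set $\Phi_{[-\gamma,\gamma]}(S\setminus U)$ (legitimate, since $S\setminus U$ is closed in $S$, hence in the compact space $X$) combined with cross-section injectivity excludes all translates of $S\setminus U$. Your version is self-contained, bypassing the boundary calculus of Subsection \ref{subsec:Calculating}, and it makes transparent exactly where each hypothesis enters; the paper's version is shorter but delegates the work to the previously established formula for $\Pa U$. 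Both are valid.
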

\begin{proof}
	Assume that $U=\In U$. Fix $0<\gamma<\eta$. From the definition of cross-section it is clear that $\Phi_{[-\gamma, \gamma]}(U)\cap S\subset U$. Therefore  $U=\In U=\text{Int}(\Phi_{[-\gamma, \gamma]}(U))\cap U=\text{Int}(\Phi_{[-\gamma, \gamma]}(U))\cap S$, implying that $U$ is relatively open in $S$.
	Now assume that $U$ is (relatively) open in $S$, i.e.\ $U\cap \partial_S U=\emptyset$. As it follows from Definition  \ref{def:Flow boundaries and interiors} that $\In U\subset U$ and $\In U\cup \Pa U=\overline{U}$, it is enough to prove   $\Pa U\cap U=\emptyset$. By Lemma \ref{lem:flow boundary formula}, $\Pa U\cap U=(\partial_S U\cap U) \cup (U\cap \Pa S)$. This expression equals the empty set as by assumption $U \cap \Pa S=\emptyset$. 
\end{proof}

 Given a cross-section $S$ and $x\in \In S$, we clearly have $\delta:=\dist(x,\Pa S)>0$ as $\Pa S$ is closed. Therefore it is easy to see that $U=B(x, \delta/2)\cap S$ is a relatively open set in $S$ with $x\in U\subset \In S$ and $\overline{U} \cap \Pa S=\emptyset$. This explains the usefulness of the criterion in the following proposition.
Indeed this proposition is key for calculating flow boundaries and interiors in the theory we develop below.
\begin{prop}\label{prop:natural boundary}
Let $(X,\Phi)$ be a topological flow. Let $U$ be a subset of a closed cross-section $S$ such that 
$\overline{U} \cap \Pa S=\emptyset$.
Then:
$$\Pa U=\partial_S U\,\, \mathrm{ and }\,\, \In U=\Int_S U.$$
\end{prop}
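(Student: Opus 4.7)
The plan is to deduce both identities as direct consequences of Lemma \ref{lem:flow boundary formula}, exploiting the hypothesis $\overline{U}\cap\Pa S=\emptyset$ to eliminate the ``extra'' piece of the flow boundary that normally distinguishes it from the subspace-topological boundary.

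For the first identity $\Pa U=\partial_S U$, I would apply Lemma \ref{lem:flow boundary formula} verbatim to obtain $\Pa U=\partial_S U\cup(\overline{U}\cap\Pa S)$, and then invoke the hypothesis $\overline{U}\cap\Pa S=\emptyset$ to kill the second summand. For the second identity $\In U=\Int_S U$, I would first rewrite the flow interior in terms of the flow boundary: from $\Pa U=\overline{U}\setminus\In U$ in Definition \ref{def:Flow boundaries and interiors}, together with $\In U\subset U\subset\overline{U}$, one gets $\In U=\overline{U}\setminus\Pa U$. Combining this with the first identity gives $\In U=\overline{U}\setminus\partial_S U$. On the other hand, since $S$ is closed in $X$ and $U\subset S$, the closure of $U$ in the subspace topology of $S$ coincides with $\overline{U}$, so the standard topological formula $\Int_S U=\overline{U}^{S}\setminus\partial_S U$ also reads $\overline{U}\setminus\partial_S U$. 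The two expressions match, yielding $\In U=\Int_S U$.

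There is no real obstacle here: the argument is essentially formal bookkeeping once Lemma \ref{lem:flow boundary formula} is available. The only point worth flagging is the identification $\overline{U}^{S}=\overline{U}$, which relies on $S$ being closed in $X$ and which underpins the coincidence between the flow interior and the subspace interior under our hypothesis.
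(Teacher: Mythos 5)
Your proposal is correct and follows essentially the same route as the paper: both deduce $\Pa U=\partial_S U$ by applying Lemma \ref{lem:flow boundary formula} and cancelling the term $\overline{U}\cap\Pa S$ via the hypothesis, and both then obtain $\In U=\Int_S U$ by identifying each side with $\overline{U}\setminus\Pa U=\overline{U}\setminus\partial_S U$. Your explicit remark that $\overline{U}^{S}=\overline{U}$ because $S$ is closed is a worthwhile clarification that the paper leaves implicit.
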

\begin{proof}
 By Lemma \ref{lem:flow boundary formula},   $\Pa U=\partial_S U$. Thus $\Int_S U=\overline{U}\setminus \partial_S U=\overline{U}\setminus \Pa U=\In U$.
\end{proof}

\begin{rem}
A case where the previous remark can be used is when $S$ is a cross-section, $U,V \subset S$ with $\In U=U$ and $\In V=V$ and $\Pa U\subset V$. Indeed $$\overline{U}=\In U\cup\Pa U\subset \In U\cup V=\In U\cup \In V \subset\In S $$
Thus $\overline{U} \cap \Pa S=\emptyset$.
\end{rem}

\subsection{The small flow  boundary property}
First, we recall the small boundary property for discrete dynamical system.
\begin{df}(\cite{SW,LW})
Let $(X,T)$ a discrete dynamical system.  A subset $A\subset X$ has a {\bf small boundary} if $\mu(\partial A)=0$ for every $T$-invariant measure $\mu$. The system $(X,T)$ is said to have the {\bf small boundary property} if there
is a basis for the topology of $X$ consisting of open sets with small boundary.
\end{df}
Let $(X, \Phi)$ be a topological flow.
\begin{df}
 A Borel subset of $X$ is called a \textbf{null set} if it has zero measure w.r.t.\ any $\Phi$-invariant Borel probability measure. A Borel subset is said to be a \textbf{full set} when its complement is a null set. 
 \end{df}
 
\begin{df}
	A closed cross-section $S$  of injectivity time $\eta$ has a \textbf{small flow boundary} if $\Phi_{[-\eta,\eta]}(\partial^{\Phi}(S))$ is a null set.
\end{df}

For a cross-section $A\subset X$, we define the \textbf{counting orbit capacity} of $A$ by
$$
\cocap(A):=\lim_{T\to \infty} \frac{1}{T}\sup_{x\in X} \sharp\{0\leq t<T:\Phi_t(x)\in A \}.
$$

The limit exists and is finite as $\sup_{x\in X} \sharp\{0\leq t<T:\Phi_t(x)\in A \}$ is subadditive. 

\begin{lem}[\cite{burguet2019symbolic}, Lemma 2.10]\label{lem:small flow boundary}
	Let $(X, \Phi)$ be a topological flow. Suppose that $S$ is a closed  cross-section  of injectivity time $\eta>0$. Then the following are equivalent.
	\begin{enumerate}

		\item  $S$ has a small flow boundary;
		\item $\cocap(\partial^{\Phi}S)=0$.
	\end{enumerate}
\end{lem}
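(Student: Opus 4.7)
The plan is to identify both conditions with the asymptotic visiting frequency of an orbit to $\partial^{\Phi}S$. The crucial geometric point is that, because $\partial^{\Phi}S\subset S$ and $S$ has injectivity time $\eta$, consecutive times at which a single orbit meets $\partial^{\Phi}S$ are separated by strictly more than $2\eta$. Setting $C:=\Phi_{[-\eta,\eta]}(\partial^{\Phi}S)$ and $N(x,T):=\#\{t\in[0,T]:\Phi_t(x)\in\partial^{\Phi}S\}$, I would first establish the sandwich
\begin{equation*}
2\eta\,N(x,T)-2\eta\;\leq\;\mathrm{Leb}\bigl\{t\in[0,T]:\Phi_t(x)\in C\bigr\}\;\leq\;2\eta\,N(x,T)+4\eta.
\end{equation*}
The upper bound comes from covering each instant $t$ with $\Phi_t(x)\in C$ by an interval $[s-\eta,s+\eta]$ around some visit $s\in[-\eta,T+\eta]$ to $\partial^{\Phi}S$; there are at most $N(x,T)+2$ such visits, again by the separation of consecutive visits. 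The lower bound records the fact that the intervals $[s_i-\eta,s_i+\eta]$ around distinct visits $s_i\in[0,T]$ have pairwise disjoint interiors, so their total length inside $[0,T]$ is $2\eta N(x,T)$ minus at most one truncated interval at each endpoint.

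For the direction (2)$\Rightarrow$(1), fix a $\Phi$-invariant Borel probability measure $\mu$. Fubini and $\Phi$-invariance yield, for every $T>0$,
\begin{equation*}
\mu(C)=\frac{1}{T}\int_X \mathrm{Leb}\{t\in[0,T]:\Phi_t(x)\in C\}\,d\mu(x).
\end{equation*}
Substituting the upper bound above gives $\mu(C)\leq 2\eta\, T^{-1}\sup_{x\in X}N(x,T)+4\eta/T$. Taking $T\to\infty$ and applying hypothesis (2) yields $\mu(C)\leq 2\eta\,\cocap(\partial^{\Phi}S)=0$. Since $\mu$ was arbitrary, $C$ is a null set and $S$ has a small flow boundary.

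For the direction (1)$\Rightarrow$(2), I would argue by contraposition. Assuming $\cocap(\partial^{\Phi}S)=c>0$, choose $x_n\in X$ and $T_n\to\infty$ with $N(x_n,T_n)/T_n\to c$. Form the empirical measures $\mu_n:=\tfrac{1}{T_n}\int_0^{T_n}\delta_{\Phi_t(x_n)}\,dt$ and pass to a weak-$\ast$ accumulation point $\mu$; a standard Krylov--Bogolyubov argument shows $\mu$ is $\Phi$-invariant. Note that $C$ is closed, being the continuous image of the compact set $\partial^{\Phi}S\times[-\eta,\eta]$. Hence the Portmanteau theorem together with the lower bound above gives
\begin{equation*}
\mu(C)\;\geq\;\limsup_{n\to\infty}\mu_n(C)\;\geq\;\limsup_{n\to\infty}\frac{2\eta\, N(x_n,T_n)-2\eta}{T_n}\;=\;2\eta c\;>\;0,
\end{equation*}
so $S$ does not have a small flow boundary.

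The only real work is setting up the sandwich precisely; the injectivity time hypothesis is exactly what is needed for the disjointness underlying the lower bound, and closedness of $C$ is exactly what is needed for the Portmanteau inequality in the converse direction. Everything else is a routine application of flow-invariance, Fubini, and Krylov--Bogolyubov.
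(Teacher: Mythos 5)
Your proof is correct, and the paper itself does not reprove this lemma but simply quotes it from Burguet (Lemma 2.10 of the cited paper); your argument is the expected one. The sandwich estimate relating $N(x,T)$ to $\mathrm{Leb}\{t\in[0,T]:\Phi_t(x)\in\Phi_{[-\eta,\eta]}(\partial^{\Phi}S)\}$ (valid because $\partial^{\Phi}S\subset S$ and consecutive visits to a cross-section of injectivity time $\eta$ are separated by more than $2\eta$), combined with Fubini/invariance in one direction and Krylov--Bogolyubov plus Portmanteau (using that $\partial^{\Phi}S$ is closed, hence the tube is compact) in the other, is exactly how this equivalence is established in the source.
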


\begin{df}
Let $(X,\Phi)$ be a topological flow. The flow $(X, \Phi)$ is said to have the \textbf{small flow boundary property} if for any $x\in X$ and any closed cross-section $S'$ with $x\in \In(S')$, there exists a closed subset $S\subset S'$ such that $x\in \In(S)$ and $S$ has a small flow boundary.
\end{df}
\begin{rem}
    Note that if a topological flow has the small flow boundary property than it has no fixed points as it is required that every $x\in X$ belongs to a cross-section.	
\end{rem}
\subsection{Suspension flows and symbolic extensions}

Let $(Z, \rho)$ be a compact metric space and $T:Z\to Z$ a homeomorphism. Let $f: Z\to \mathbb{R}_{>0}$ be a continuous map. Let $
F_{f}=\{(x,t): 0\le t\le f(x), x\in Z \}\subset Z\times \R$. Let  $R_f$ be the closed equivalence relation on $F_{f}$ induced by $\{((x,f(x)),(Tx,0))|\, x\in Z\}\subset   F_{f}\times F_{f}$.

The \textbf{suspension flow} of $T$ under $f$ is the flow $\Phi$ on the space
$$
Z_{f}:=F_f/ R_f
$$
induced by the time translation $T_t$ on $Z\times \mathbb{R}$ defined by $T_t(x,s)=(x, t+s)$. A suspension flow over a zero-dimensional $\mathbb{Z}$-topological dynamical system is called a \textbf{zero dimensional suspension flow} and a topological extension by a zero-dimensional suspension flow is said to be a \textbf{zero-dimensional extension}. Similarly a suspension flow over a symbolic $\mathbb{Z}$-topological dynamical system (a.k.a $\mathbb{Z}$-\textbf{subshift}) is called a \textbf{ symbolic suspension flow} and a topological extension by a symbolic suspension flow is said to be a \textbf{symbolic extension}.
 \begin{df}\label{df:principal}
Let $(X, \Phi)$ and $(Y, \Psi)$ be two topological flows. Suppose that $\pi: Y\to X$ is a topological extension from $(X, \Phi)$ to $(Y, \Psi)$. A topological extension is said to be (see \cite[\S 2.3]{burguet2019symbolic})
\begin{itemize}
   \item \textbf{entropy-preserving} when it preserves topological  entropy i.e.\ $\htop(X, \Phi)=\htop(Y, \Psi)$.
    \item \textbf{principal} when it preserves the entropy of invariant measures, i.e.\ $\h(\mu,\Psi)=\h(\pi_* \mu,\Phi)$ for all $\Psi$-invariant measures $\mu$,
\item \textbf{isomorphic} when $\pi: (Y, \Psi, \mu) \to (X, \Phi, \pi_* \mu)$ is a measure theoretical isomorphism for all $\Psi$-invariant measures $\mu$,
\item \textbf{strongly isomorphic} when there is a full set $E$ of $X$ such that the restriction of $\pi$ to $\pi^{-1}E$ is one-to-one.
\end{itemize}
\end{df}

\begin{rem}
 
Clearly, strongly isomorphic $\Longrightarrow$ isomorphic $\Longrightarrow$ principal$\Longrightarrow$ entropy-preserving. It is easy to give an example of an extension which is  entropy-preserving but not principal: Let $\pi: (X,T)\rightarrow(Y,S)$ such that $\htop(T)>\htop(S)$. Then let $f: X  \sqcup X \to X  \sqcup Y$ by $f(x)=x$ if $x$ in first $X$ and $f(x)=\pi(x)$ if $x$ in second $X$. An example of an extension which is principal but not isomorphic is given by \cite[Theorem 4.7]{burguet2019uniform}. 
Downarowicz and Glasner constructed
an minimal system which is an isomorphic but not almost 1–1 extension of its maximal equicontinuous factor. If this extension were  strongly isomorphic then it would trivially have at least one singleton fiber implying the extension is almost 1–1. Thus their construction also  gives an example of an extension which is isomorphic but not strongly isomorphic (see \cite[Remark 3.2(d)]{downarowicz2016isomorphic}).
Note that the $1$-suspensions over the examples above give the analogous  examples for flows. Indeed by \cite[p. 4328]{burguet2019symbolic}, there is an affine homeomorphism between the simplices of invariant measures $\Theta:\mathcal{M}(X,T)\rightarrow \mathcal{M}(Z_1(X),\Phi)$ given by $\mu \mapsto \mu\times \lambda$, where $\lambda$ is the Lebesgue measure on the interval $[0,1]$. 
\end{rem}

\subsection{Some facts from dimension theory}\label{subsec:facts_dimension_theory}

The following results in dimension theory for a separable metric spaces  will be used in the proof of our main result. Recall that an $F_\sigma$ set is a countable union of closed sets.  
\begin{thm}(\cite[Proposition 1.2.12]{engelking1995theory})\label{thm:R1}
Let $E$ be a zero-dimensional subset in a separable metric space $M$. Then for every $x\in M$ and every open neighborhood $U$ of $x$ there is an open set $U'\subset U$ with $x\in U'$ and $\partial U' \cap E=\emptyset$
\end{thm}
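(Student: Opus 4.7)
The plan is to establish this classical dimension-theoretic fact via a Urysohn-type construction that exploits the zero-dimensional structure of $E$.

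I would first reduce to the nontrivial case $x \in \overline{E}$: if $x \notin \overline{E}$, then a small enough open ball $B(x, \rho) \subset U$ with $\overline{B(x, \rho)} \cap \overline{E} = \emptyset$ satisfies $\partial B(x, \rho) \cap E = \emptyset$, so $U' = B(x, \rho)$ works. For the main case, fix $r > 0$ with $\overline{B(x, r)} \subset U$ and invoke Urysohn's lemma in the (normal) metric space $M$ to obtain a continuous function $f \colon M \to [0, 1]$ with $f(x) = 0$ and $f \equiv 1$ on $M \setminus B(x, r)$. If I can find $t \in (0, 1)$ such that $f^{-1}(t) \cap E = \emptyset$, then $U' := f^{-1}([0, t))$ is open, contains $x$, is contained in $U$, and has $\partial U' \subset f^{-1}(t)$, hence $\partial U' \cap E = \emptyset$.

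The crux is producing such a value $t$: for a generic $f$ every level in $(0, 1)$ may meet $E$. Here is where the zero-dimensionality of $E$ enters. As a zero-dimensional separable metric space, $E$ admits arbitrarily fine partitions by clopen-in-$E$ sets. Choosing such a partition $\{W_i\}$ of $E \cap \overline{B(x, r)}$ into pieces of small diameter and assigning to each $W_i$ a value $c_i$ drawn from a countable dense subset $C \subset (0, 1)$ (while assigning the value $0$ to the piece containing $x$, if $x \in E$), one obtains a continuous step function $h \colon E \to [0, 1]$ taking only countably many values. Using Tietze's theorem to extend $h$ to a continuous $\widetilde{h} \colon M \to [0, 1]$, and then interpolating $\widetilde{h}$ with $f$ via a partition of unity subordinate to a small neighborhood of $E \cap \overline{B(x, r)}$, yields a continuous $g \colon M \to [0, 1]$ still satisfying $g(x) = 0$ and $g \equiv 1$ off $B(x, r)$, but with $g(E)$ countable. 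Any $t \in (0, 1) \setminus g(E)$ then supplies the required separating level.

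The hardest step is the patching that produces $g$ while preserving the boundary values of $f$: one needs sufficient diameter and separation control on the $W_i$ (and their closures in $M$) so that the Tietze extension and the partition-of-unity interpolation interact cleanly, and do not inadvertently introduce new level sets hitting $E$. This is a delicate but standard dimension-theoretic construction, and is carried out in full detail as Proposition 1.2.12 of \cite{engelking1995theory}.
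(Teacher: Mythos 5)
Your reduction to finding a level $t\in(0,1)$ of a Urysohn function $f$ with $f^{-1}(t)\cap E=\emptyset$ is fine, and the need to perturb $f$ is genuine (for $M=[0,1]^2$ and $E$ the zero-dimensional set of points with both coordinates irrational, every sphere around $x$ meets $E$). The gap is in the perturbation itself. First, Tietze's theorem extends continuous functions from \emph{closed} subspaces, and $E$ (hence $E\cap\overline{B(x,r)}$) need not be closed in $M$; a locally constant $h$ on a fine clopen partition of $E$ with values assigned arbitrarily from a countable dense set typically admits \emph{no} continuous extension to $M$. (Take $M=\mathbb{R}$, $E=\mathbb{Q}$, pieces $\mathbb{Q}\cap(a_i,a_{i+1})$ with irrational endpoints, and alternate two distinct values: any continuous extension would be constant on each $[a_i,a_{i+1}]$ and hence two-valued at $a_{i+1}$.) Second, even granting an extension $\widetilde h$, the blend $g=\lambda\widetilde h+(1-\lambda)f$ keeps $g(E)$ countable only where $\lambda\in\{0,1\}$; on the collar where $0<\lambda<1$ the points of $E$ can take uncountably many values, and arranging $\lambda\equiv 1$ on a neighborhood of the non-closed set $E\cap\overline{B(x,r)}$ while keeping $g\equiv 1$ off $B(x,r)$ (needed for $U'\subset U$) and $g(x)=0$ (already problematic when $x\in\overline{E}\setminus E$, since nearby pieces carry positive values $c_i$) is not addressed. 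Finally, the closing appeal to ``Proposition 1.2.12 of Engelking'' is circular: that proposition \emph{is} the statement being proved, and the paper offers no proof beyond this citation.

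The standard argument is shorter and avoids functions entirely. Put $A=\{x\}$, $B=M\setminus U$, and choose open $W_A\supset A$, $W_B\supset B$ with disjoint closures. Since $\dim E\le 0$, there is a set $V$ clopen in $E$ with $\overline{W_A}\cap E\subset V$ and $V\cap\overline{W_B}\cap E=\emptyset$. One checks that $A\cup V$ and $B\cup(E\setminus V)$ are separated sets in $M$ (each disjoint from the closure of the other), so by complete normality of metric spaces they lie in disjoint open sets $U_1\supset A\cup V$ and $U_2\supset B\cup(E\setminus V)$. Then $U'=U_1$ works: $x\in U_1$; $U_1\cap B=\emptyset$ gives $U_1\subset U$; and $\partial U_1\subset M\setminus(U_1\cup U_2)$, which misses $E=V\cup(E\setminus V)\subset U_1\cup U_2$. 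Any function-theoretic proof would need this separation statement first, at which point the level-set machinery is superfluous.
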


\begin{thm}(\cite[Chapter III, Theorem III 2]{HW41})\label{thm:R3}
Let $(B_i)_{i\in \mathbb{N}}$ be a countable collection of closed sets in a separable metric space satisfying $\dim B_i\le k$ for all $i$. Then $\dim \bigcup_i B_i\le k$. 
\end{thm}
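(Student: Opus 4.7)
The plan is to give the classical Hurewicz--Wallman argument by induction on $k$, working with the small inductive dimension $\mathrm{Ind}$, which for separable metric spaces coincides with the Lebesgue covering dimension $\dim$ used throughout the paper (see \cite{fedorchuk1990fundamentals}). Recall that $\mathrm{Ind}(Y)\le k$ iff for every pair of disjoint closed sets $A,B\subset Y$ there is a closed separator $L\subset Y$ of $A$ and $B$ (i.e.\ $Y\setminus L=U\sqcup V$ with $U,V$ disjoint open, $A\subset U$, $B\subset V$) with $\mathrm{Ind}(L)\le k-1$. The goal is to verify this criterion for $X:=\bigcup_i B_i$ at level $k$.

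The base case $k=-1$ is trivial since then each $B_i=\emptyset$, forcing $X=\emptyset$. For the inductive step, fix disjoint closed sets $A,B\subset X$. The key tool is a separator lemma: for any closed $C\subset X$ with $\dim C\le k$, there exists a closed separator $L\subset X$ of $A,B$ with $\dim(L\cap C)\le k-1$. To prove it, apply the inductive definition of $\dim C\le k$ to produce, inside $C$, disjoint relatively open sets $U',V'\subset C$ containing $A\cap C,B\cap C$ with $\dim(C\setminus(U'\cup V'))\le k-1$; then extend $U',V'$ to disjoint open subsets of $X$ using normality of $X$ and take the complementary closed set as $L$.

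Iterating this lemma along the sequence $B_1,B_2,\ldots$, one builds a nested chain of closed separators $L_1\supset L_2\supset\cdots$ of $A$ and $B$ with $\dim(L_n\cap B_j)\le k-1$ for every $j\le n$. Provided the construction is arranged so that the limit $L_\infty:=\bigcap_n L_n$ is again a separator of $A$ and $B$, one has $L_\infty=\bigcup_j(L_\infty\cap B_j)$, a countable union of closed subsets (closed in $L_\infty$) of dimension $\le k-1$; the inductive hypothesis then yields $\dim L_\infty\le k-1$, verifying the separator criterion at level $k$ for $X$.

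The main obstacle is exactly this last point: a nested intersection of separators need not separate. The Hurewicz--Wallman remedy is to carry out the iteration so that $L_{n+1}$ differs from $L_n$ only inside a prescribed small open region, and to organize these regions so that the complementary open sets $U_n,V_n$ stabilize on a compact exhaustion of $A$ and $B$ respectively. One can then take unions of the $U_n$'s and of the $V_n$'s and obtain disjoint open sets in $X\setminus L_\infty$ exhibiting $L_\infty$ as a separator. Once this bookkeeping is in place the inductive hypothesis finishes the proof, and the theorem follows.
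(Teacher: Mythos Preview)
The paper does not give its own proof of this statement: it is simply quoted as a classical result from Hurewicz--Wallman \cite{HW41}, so there is nothing to compare your argument against on the paper's side. Your sketch is indeed the standard Hurewicz--Wallman induction (the ``sum theorem''), and the overall architecture---induct on $k$, use a separator lemma to handle one closed piece at a time, intersect a nested sequence of separators, then apply the inductive hypothesis to the limit---is correct.

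Two points are worth tightening. First, a terminological slip: the criterion you state (separators between arbitrary disjoint closed sets) is the \emph{large} inductive dimension $\mathrm{Ind}$, not the small one $\mathrm{ind}$; this is harmless since all three notions coincide on separable metric spaces, but the name should match the definition. Second, the separator lemma you invoke---given closed $C\subset X$ with $\dim C\le k$, there is a separator $L$ of $A,B$ in $X$ with $\dim(L\cap C)\le k-1$---requires more than ``normality to extend $U',V'$'': one must show that a separator $L'$ of $A\cap C,\,B\cap C$ inside $C$ can be enlarged to a separator $L$ of $A,B$ in all of $X$ with $L\cap C\subset L'$. This is a genuine lemma (the ``second separation theorem'' in \cite{HW41}) and its proof, while elementary, is not just a one-line normality argument. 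Likewise, your final paragraph correctly identifies the real difficulty (a nested intersection of separators need not separate) but then defers to ``bookkeeping''; in the classical proof this is handled by choosing each successive separator within a controlled neighborhood of the previous one so that the complementary open sets form increasing unions, and this deserves at least one explicit sentence rather than a gesture.
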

The following corollary is obvious.
\begin{cor}\label{cor:sum_theorem_for_F_sigma}
Let $(B_i)_{i\in \mathbb{N}}$ be a countable collection of $F_\sigma$ sets in a separable metric space satisfying $\dim B_i\le k$ for all $i$. Then $\dim \bigcup_i B_i\le k$. 
\end{cor}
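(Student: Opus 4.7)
The plan is to reduce the statement to the closed-set case (Theorem \ref{thm:R3}) by unpacking the $F_\sigma$ structure. First I would write, for each $i\in\mathbb{N}$, a decomposition $B_i=\bigcup_{j\in\mathbb{N}} F_{i,j}$ where each $F_{i,j}$ is closed in the ambient separable metric space. Then
\[
\bigcup_{i\in\mathbb{N}} B_i \;=\; \bigcup_{(i,j)\in\mathbb{N}\times\mathbb{N}} F_{i,j},
\]
which is a countable family of closed sets indexed by a countable set.

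Next I would invoke monotonicity of Lebesgue covering dimension in separable metric spaces, i.e.\ that $A\subset B$ implies $\dim A\leq \dim B$ (this is a standard consequence of the subspace theorem in Engelking). Since $F_{i,j}\subset B_i$ and $\dim B_i\leq k$ by hypothesis, we obtain $\dim F_{i,j}\leq k$ for every pair $(i,j)$.

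Finally, applying Theorem \ref{thm:R3} to the countable collection $\{F_{i,j}\}_{(i,j)\in\mathbb{N}\times\mathbb{N}}$ of closed sets with $\dim F_{i,j}\leq k$, we conclude that
\[
\dim \bigcup_{i\in\mathbb{N}} B_i \;=\; \dim \bigcup_{(i,j)} F_{i,j} \;\leq\; k,
\]
which is the desired bound. There is no real obstacle here: the only non-trivial ingredient beyond Theorem \ref{thm:R3} itself is the monotonicity of $\dim$ under passage to subspaces in the separable metric setting, and this is a basic fact of classical dimension theory.
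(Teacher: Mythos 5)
Your argument is correct and is exactly the intended one: the paper states this corollary without proof (calling it obvious after Theorem \ref{thm:R3}), and the proof it has in mind is precisely your reduction — decompose each $B_i$ into countably many closed sets, use monotonicity of $\dim$ under subspaces to bound each piece by $k$, and apply the countable closed sum theorem to the resulting countable family. Nothing is missing.
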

\begin{lem}\label{lem:clo_int_open_is_F_sigma}
Let $M$ be a separable metric space. Let $C$ be closed and $U$ open then $U\cap C$ is $F_\sigma$. 
\end{lem}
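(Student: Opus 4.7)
The plan is to reduce the statement to the classical fact that every open subset of a metric space is an $F_\sigma$ set, after which intersecting with a closed set only produces another countable union of closed sets.

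First, I would write $U$ as a countable union of closed sets. For each $n \geq 1$, define
\[
F_n \;=\; \{x \in M : d(x, M \setminus U) \geq 1/n\},
\]
with the convention $F_n = M$ if $U = M$. Since the function $x \mapsto d(x, M \setminus U)$ is $1$-Lipschitz and hence continuous, each $F_n$ is closed in $M$. Moreover $U = \bigcup_{n \geq 1} F_n$: indeed, $F_n \subset U$ is immediate because $d(x, M \setminus U) > 0$ forces $x \notin M \setminus U$ (as $M \setminus U$ is closed); conversely, for any $x \in U$, closedness of $M \setminus U$ again yields $d(x, M \setminus U) > 0$, so $x \in F_n$ for all sufficiently large $n$.

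Having done this, I would simply write
\[
U \cap C \;=\; \bigcup_{n \geq 1} (F_n \cap C),
\]
and observe that each $F_n \cap C$ is closed as an intersection of two closed subsets of $M$. Hence $U \cap C$ is a countable union of closed sets, i.e.\ an $F_\sigma$ set, as required.

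There is essentially no obstacle here: separability of $M$ is not even needed, and the argument relies only on the fact that in a metric space the complement of an open set, being closed, is at positive distance from each of its points, which allows one to exhaust $U$ by the closed ``inner parallel'' sets $F_n$.
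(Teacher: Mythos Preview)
Your proof is correct and follows the same overall strategy as the paper: write $U$ as an $F_\sigma$ set and then intersect with the closed set $C$. The only difference is in how $U$ is exhibited as $F_\sigma$: the paper uses second countability to write $U$ as a countable union of closed balls, while you use the inner parallel sets $F_n=\{x: d(x,M\setminus U)\ge 1/n\}$. Your route is slightly more elementary and, as you correctly observe, does not require separability; the paper's phrasing via closed balls is shorter but leans on the second-countability hypothesis that you show is unnecessary.
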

\begin{proof}
    Note $M$ is second-countable and write $U$ as a countable union of closed balls. 
\end{proof}

The following theorem is well known.
\begin{thm}(\cite[Theorem 1.5.7]{engelking1995theory})\label{thm:R22}
Let $M$ be a separable metric space. If $-1<\dim(M)<\infty$ there is a zero-dimensional subset $E$ of $M$  such that $\dim(M\setminus E)=\dim(M)-1$.
\end{thm}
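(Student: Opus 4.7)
The plan is to construct the desired decomposition $M = E \sqcup (M\setminus E)$ by extracting a countable base of $M$ whose elements have $(n-1)$-dimensional boundaries, where $n := \dim M$. Since $M$ is a separable metric space, Lebesgue covering dimension coincides with the small inductive dimension (the footnote in the excerpt records this equivalence), so every point of $M$ has arbitrarily small open neighborhoods whose boundary has dimension at most $n-1$. Combining this with second countability (a standard consequence of separability and metrizability for metric spaces), one can select a countable base $\{U_i\}_{i\in \N}$ of $M$ satisfying $\dim \partial U_i \le n-1$ for every $i$.

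I would then set $Y := \bigcup_{i\in\N} \partial U_i$ and $E := M\setminus Y$, and claim that $E$ is the desired zero-dimensional set. Each $\partial U_i$ is closed with $\dim \partial U_i \le n-1$, so Theorem \ref{thm:R3} yields $\dim Y \le n-1$, giving the upper bound $\dim(M\setminus E) \le n-1$. For zero-dimensionality of $E$, fix $x \in E$ and a relatively open neighborhood $W$ of $x$ in $E$; choose a base element $U_i$ with $x \in U_i$ and $U_i \cap E \subset W$. Then $U_i \cap E$ is relatively open in $E$, and its relative boundary in $E$ is contained in $\partial U_i \cap E$, which is empty since $\partial U_i \subset Y = M\setminus E$. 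Hence $U_i \cap E$ is clopen in $E$, so $E$ admits a base of clopen sets and $\dim E \le 0$.

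To exclude $\dim(M\setminus E) < n-1$, invoke the Menger-Urysohn inequality $\dim(A \cup B) \le \dim A + \dim B + 1$ for separable metric spaces: $n = \dim M \le \dim E + \dim Y + 1 \le 0 + \dim Y + 1$, forcing $\dim Y \ge n-1$. Combined with the upper bound, $\dim(M\setminus E) = n - 1$, as required.

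The main obstacle is really the very first step, namely the existence of a countable base with $(n-1)$-dimensional boundaries; this is where one pays for passing between the covering-dimensional definition used in the paper and the small-inductive-dimensional characterization that directly produces neighborhoods with controlled boundaries. Once that structural fact about separable metric spaces is granted, everything else is a clopen-base argument plus one application each of the countable sum theorem and the Menger-Urysohn inequality.
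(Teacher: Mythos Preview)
Your proof is correct. The paper does not give its own proof of Theorem~\ref{thm:R22}; it merely cites it from Engelking. The only closely related argument in the paper is the proof of the $F_\sigma$ strengthening, Theorem~\ref{thm:R2}, and it is instructive to compare your route with that one.

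The two decompositions are in a sense dual. You take $Y=\bigcup_i\partial U_i$ and set $E=M\setminus Y$; thus your zero-dimensional set is the \emph{complement} of the boundary union, which is a $G_\delta$ set. The paper's proof of Theorem~\ref{thm:R2} proceeds by induction on $n$: at the inductive step it applies the hypothesis inside each $\partial B_i$ to peel off a zero-dimensional $F_\sigma$ piece $F_i\subset\partial B_i$, and sets $E=\bigcup_i F_i$, so there the zero-dimensional set lives \emph{inside} the boundary union and is $F_\sigma$ by construction. Your argument is shorter and non-inductive, but it cannot give the $F_\sigma$ conclusion of Theorem~\ref{thm:R2}, which is what the paper actually needs downstream (for the sum theorem in Corollary~\ref{cor:sum_theorem_for_F_sigma}). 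Both proofs finish the lower bound $\dim(M\setminus E)\ge n-1$ the same way, via the Menger--Urysohn inequality.
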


We will need the following strengthening of Theorem \ref{thm:R22}.
\begin{thm}\label{thm:R2}
Let $M$ be a separable metric space. If $-1<\dim(M)<\infty$ there is a zero-dimensional $F_\sigma$ subset $E$ of $M$  such that $\dim(M\setminus E)=\dim(M)-1$.
\end{thm}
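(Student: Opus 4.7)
The plan is to argue by induction on $n := \dim(M)$, refining the decomposition-theoretic proof of Theorem \ref{thm:R22} so that the extracted zero-dimensional set is additionally $F_\sigma$ in $M$. The base case $n = 0$ is immediate: take $E := M$, which is closed in itself and zero-dimensional by hypothesis, with $M \setminus E = \emptyset$ of dimension $-1$.

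For the inductive step, suppose the statement holds for all separable metric spaces of dimension strictly between $-1$ and $n$, and assume $\dim M = n \geq 1$. I fix a countable basis $\{B_i\}_{i \in \mathbb{N}}$ of $M$ satisfying $\dim(\partial B_i) \leq n - 1$ for every $i$, which exists since $\dim M \leq n$. Put $Y := \bigcup_i \partial B_i$ and $Z := M \setminus Y$. Then $Y$ is $F_\sigma$ in $M$ with $\dim Y \leq n - 1$ (Theorem \ref{thm:R3}), and $Z$ is zero-dimensional by a standard argument: for $x \in Z$ and any open neighborhood $V$ of $x$, choose $B_i$ with $x \in B_i \subset V$; then $B_i \cap Z$ is clopen in $Z$, since its boundary in $Z$ lies in $\partial B_i \cap Z \subset Y \cap Z = \emptyset$. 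Moreover $\dim Y$ must equal $n - 1$ exactly: if instead $\dim Y \leq n - 2$, the addition theorem $\dim(A \cup B) \leq \dim A + \dim B + 1$ for separable metric spaces (\cite[Theorem 1.5.4]{engelking1995theory}) applied to $M = Y \cup Z$ would give $\dim M \leq n - 1$, contradicting $\dim M = n$.

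I then apply the inductive hypothesis to $Y$ to obtain a zero-dimensional $F_\sigma$ subset $E_Y$ of $Y$ with $\dim(Y \setminus E_Y) = n - 2$, and set $E := E_Y$. To verify that $E$ is $F_\sigma$ in $M$, write $E_Y = \bigcup_k F_k$ with each $F_k$ closed in $Y$; then $F_k = G_k \cap Y$ for some $G_k$ closed in $M$, and since $Y$ is itself a countable union of sets closed in $M$, each $G_k \cap Y$ is too, and hence so is $E$. The decomposition $M \setminus E = Z \cup (Y \setminus E_Y)$ together with the addition theorem yields $\dim(M \setminus E) \leq 0 + (n - 2) + 1 = n - 1$, while the reverse inequality follows from the same theorem applied to $M = E \cup (M \setminus E)$. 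The chief subtlety is that Corollary \ref{cor:sum_theorem_for_F_sigma} alone does not suffice to bound $\dim(Z \cup (Y \setminus E_Y))$, because $Z$ is in general only $G_\delta$ (not $F_\sigma$) in $M$; the full addition theorem must therefore be invoked, and one must track carefully that the $F_\sigma$ property persists when passing from the subspace $Y$ back to the ambient space $M$.
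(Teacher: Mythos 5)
Your proof is correct, and while it shares the paper's inductive skeleton (induction on $n=\dim M$ over a countable basis $\{B_i\}$ with $\dim \partial B_i\le n-1$), the inductive step is executed differently. The paper applies the inductive hypothesis to each boundary $\partial B_i$ \emph{separately}, obtaining zero-dimensional $F_\sigma$ sets $F_i\subset\partial B_i$ with $\dim(\partial B_i\setminus F_i)\le n-2$, sets $E=\bigcup_i F_i$, and gets the upper bound $\dim(M\setminus E)\le n-1$ by noting that $\{B_i\cap(M\setminus E)\}$ is a basis of $M\setminus E$ whose boundaries lie in $\partial B_i\setminus F_i$; the addition theorem of \cite[Chapter III, 2 B]{HW41} is invoked only for the reverse inequality. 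You instead make a \emph{single} application of the inductive hypothesis to the union $Y=\bigcup_i\partial B_i$ (after first verifying $\dim Y=n-1$ exactly), and derive the upper bound from the addition theorem applied to the decomposition $M\setminus E=Z\cup(Y\setminus E_Y)$ with $Z=M\setminus Y$ zero-dimensional. Both routes are sound. Yours trades the basis characterization of dimension for a second use of the addition theorem, at the cost of two extra verifications which you correctly supply: that $\dim Y$ is exactly $n-1$ (needed so that $\dim(Y\setminus E_Y)=n-2$), and that an $F_\sigma$ subset of the $F_\sigma$ set $Y$ remains $F_\sigma$ in $M$. Your closing observation is also apt: Corollary \ref{cor:sum_theorem_for_F_sigma} cannot replace the addition theorem in your argument because $Z$ is in general only $G_\delta$; the paper sidesteps this entirely by never having to bound the dimension of a union involving $Z$.
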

\begin{proof}
    Denote $n=\dim(M)$. If $n=0$, set $E=M$. If $n=1$, let $\{B_i\}_{i=1}^\infty$ be a countable basis of open sets of $M$ so that $\dim(\partial B_i)=0$ for all $i$ (\cite[Theorem 1.1.6]{engelking1995theory}). Let $E=\bigcup_i\partial B_i$. By Theorem \ref{thm:R3} $E$ is a zero-dimensional $F_\sigma$ set. Note $\{B_i\cap (M\setminus E)\}_{i=1}^\infty$ is a  basis of $M\setminus E$ with $\partial (B_i\cap (M\setminus E))=\emptyset$. Thus $\dim (M\setminus E)=0$ as desired. Assume the claim has been established for $n-1\geq 0$. Let $\{B_i\}_{i=1}^\infty$ be a countable basis of $M$ so that $\dim(\partial B_i)\leq n-1$ for all $i$. 
    Using the inductive assumption let $F_i\subset \partial B_i $ be a zero-dimensional $F_\sigma$ set so that $\dim(\partial B_i\setminus F_i)\leq n-2$. Let $E=\cup_i F_i$. By Theorem \ref{thm:R3}, $E$ is a zero-dimensional $F_\sigma$ set. Note $\{B_i\cap (M\setminus E)\}_{i=1}^\infty$ is a  basis of $M\setminus E$ with $\partial (B_i\cap (M\setminus E))\subset \partial B_i\setminus F_i$. Thus $\dim (M\setminus E)\leq n-1$. By \cite[Chapter III, 2 B]{HW41}, $\dim (E\cup (M\setminus E))\leq 1+\dim (E)+ \dim(M\setminus E)$ and thus $\dim (M\setminus E)= n-1$.
\end{proof}

As an illustration of Theorem \ref{thm:R2} consider $M=[0,1]^2$ and note that $E=M\cap (\mathbb{Q}\times\mathbb{Q})$ is zero-dimensional and $M\setminus E$ is one-dimensional. Indeed it is easy to see that balls centered at rational coordinates with a rational radius form a base with zero-dimensional boundary in $M\setminus E$. As an illustration of Theorem \ref{thm:R1} note that  balls centered at rational coordinates with a transcendental radius do not intersect $E$.
\begin{prop}\label{prop:avoiding zero_dim}
 Let $M$ be a metric compact space and $E$ a zero-dimensional subset of $M$. Let $C$ be a closed set and $U$ an open subset such that $C\subset U\subset M$.  Then there is an open set $U'$ with $C \subset U'\subset \overline{U'}\subset U$ and $\partial U' \cap E=\emptyset$.
\end{prop}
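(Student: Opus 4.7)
The plan is to apply the pointwise separation result Theorem \ref{thm:R1} at every point of $C$ and then use compactness of $C$ to patch local neighborhoods into one global open set $U'$ whose boundary still avoids $E$.

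First I would fix, for each $x \in C$, a small open neighborhood whose closure stays inside $U$. Since $C$ is compact (being a closed subset of the compact space $M$) and $M\setminus U$ is closed and disjoint from $C$, the number $\delta_x = d(x, M\setminus U)$ is strictly positive for every $x \in C$ (the case $U=M$ is trivial, with $U' = M$). Set $V_x = U \cap B(x, \delta_x/2)$, an open neighborhood of $x$, and apply Theorem \ref{thm:R1} inside $V_x$ to produce an open set $U_x'$ with $x \in U_x' \subset V_x$ and $\partial U_x' \cap E = \emptyset$. Then $\overline{U_x'} \subset \overline{B(x,\delta_x/2)} \subset B(x,\delta_x) \subset U$.

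Next, using the compactness of $C$, I would extract a finite subcover $U_{x_1}', \dots, U_{x_n}'$ of $\{U_x'\}_{x\in C}$ and define $U' = \bigcup_{i=1}^n U_{x_i}'$. Clearly $C \subset U'$ and $\overline{U'} = \bigcup_{i=1}^n \overline{U_{x_i}'} \subset U$. For the boundary condition, I would invoke the general topological fact that for any family of open sets $\partial\bigl(\bigcup_i A_i\bigr) \subset \bigcup_i \partial A_i$: if $y \in \partial U'$ then $y \in \overline{U_{i_0}'}$ for some $i_0$, but $y \notin U'$ and in particular $y \notin U_{i_0}'$, so $y \in \partial U_{i_0}'$. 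Consequently $\partial U' \cap E \subset \bigcup_{i=1}^n (\partial U_{x_i}' \cap E) = \emptyset$, as required.

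There is no serious obstacle here; the only two ingredients are the pointwise statement of Theorem \ref{thm:R1} and a standard compactness-plus-finite-union argument. The one detail that has to be handled carefully is ensuring $\overline{U_x'} \subset U$ (not just $U_x' \subset U$), which is why I shrink $V_x$ inside the open ball of radius $\delta_x/2$ before invoking Theorem \ref{thm:R1}.
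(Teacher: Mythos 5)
Your proposal is correct and follows essentially the same route as the paper: apply Theorem \ref{thm:R1} at each point of $C$, extract a finite subcover by compactness, and take the finite union, using $\partial(\bigcup_i A_i)\subset\bigcup_i\partial A_i$. The only cosmetic difference is how the containment $\overline{U'}\subset U$ is arranged --- the paper first fixes a single intermediate open set $W$ with $C\subset W\subset\overline{W}\subset U$, while you shrink each neighborhood individually via the distance to $M\setminus U$; both devices are equivalent.
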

\begin{proof}
First fix an open set $W$ so $C \subset W\subset \overline{W}\subset U$. Using Theorem \ref{thm:R1}, for each $x\in C$, choose an open set $x\in U_x\subset W$ with $\partial U_x \cap E=\emptyset$.  Let $U_{x_1},\ldots, U_{x_n}$ be a  cover of $C$. It is easy to see that $U'=\bigcup_{i=1}^n U_{x_i}\subset W$ has the required properties.
\end{proof}

\section{Establishing the small flow boundary property }\label{sec:Establishing the small flow boundary property}
\subsection{Notation and assumptions in force in Section 3}\label{subsec:notation_3}
Let $(X, \Phi)$ be a topological flow without fixed points. Through this section, we always suppose that  $\dim(X)=d+1$ for $d\ge 0$.
By Lemma \ref{lem:complete family appendix} there exists an $\eta>0$  and $0<\alpha<\eta$ such that $(X, \Phi)$ has two complete families of cross-sections $\SS=\{S_i \}_{i=1}^{N}$ of (closed disjoint) cross-sections of injectivity time $\eta_{\SS}$ and  $\SS'=\{S_i' \}_{i=1}^{N}$  of injectivity time $\eta_{\SS}'$ such that for all $1\le i\le N$,
\begin{itemize}
	\item $S_i=\overline{S_i}\subset \In(S_i')$; 
	\item $\eta=\eta_{\SS}=\eta_{\SS'}$;
	\item $\max \{  \diam (S_i')\}\le \alpha$;
	\item $\Phi_{[0,\alpha]}\mathcal{G}=\Phi_{[-\alpha, 0]}\mathcal{G}=X$, where $\mathcal{G}=\cup_{S\in \mathcal{S}} S$.
\end{itemize}
\noindent
Starting with Definition \ref{def:n-general} in the sequel we fix two cross-sections $S=S_i$ and $S'=S_i'$ in the complete families $\mathcal{S}$ and $\mathcal{S}'$ for some $1\le i\le N$. 

\subsection{Local homeomorphisms between cross-sections }\label{sec:notations}

\begin{df}
Let $1\le i,j\le N$. Denote by $t_{i,j}$ the first positive \textbf{hitting time} from $S_i$ to $S_j$, that is, for $x\in S_i$, 
$$
t_{i,j}(x)=\min\{t>0: \Phi_t(x)\in S_j \},
$$
where if the argument set is empty we put $t_{i,j}(x)=\infty$. The functions $t_{i,j}'$ are defined similarly w.r.t.\ $S_i'$ and $S_j'$. Note $t_{i,j}'(x)\leq t_{i,j}(x)$ for all $x\in S_i$. Define
$$D_{i,j}=\{x\in S_i: t_{i,j}(x)\le \eta \}$$
and
$$D_{i,j}'=\{x\in S_i': t_{i,j}'(x)<\infty\}.$$ 
Note $D_{i,i}=\emptyset$ for all $i$. Let $T_{i,j}$ be the first positive \textbf{hitting map} from $D_{i,j}$ to $S_j$, that is, for $x\in D_{i,j}$, 
$$
T_{i,j}(x)=\Phi_{t_{i,j}(x)}(x),
$$
Similarly, we define $T_{i,j}'$ from $D_{i,j}'$ to $S_j'$. 

\end{df}

\begin{df}\label{def:i_th return map}
Define the \textbf{first return-time} map\\ $t^{(1)}_\mathcal{G}(x)=t_\mathcal{G}~:~ \mathcal{G}~=~\cup_{S\in \mathcal{S}} S\to \R_+$ by $$t^{(1)}_\mathcal{G}(x)=t_\mathcal{G}(x):=\min \{t>0|\, \Phi_t(x)\in \mathcal{G}\}.$$
As $\SS=\{S_i \}_{i=1}^{N}$ is a complete family of closed disjoint cross-sections, there is $\gamma>0$ so that $S=\bigcup_{i=1}^{N} S_i$ is a global cross-section with injectivity time $\gamma$. Thus the argument set in the definition above is never empty.

Inductively for $i\geq 1$ define the \textbf{$(i+1)$-st return time} map \\ $t^{(i+1)}_\mathcal{G}(x)~:~ \mathcal{G}~=~\cup_{S\in \mathcal{S}} S\to \R_+$ by $$t^{(i+1)}_\mathcal{G}(x)=t^{(i)}_\mathcal{G}(t_\mathcal{G}(x))+t_\mathcal{G}(x).$$

Define $t^{(0)}_\mathcal{G}(x)=0$ and $t^{(-k)}_\mathcal{G}(x)$ for $k\in \N$, similarly to the above.

\end{df}

\begin{df}
For $1\leq i,j\leq N$, define:

$$F_{i,j}:=\{x\in D_{i,j}|\, t_{i,j}(x)=t_\mathcal{G}(x)\}. $$
That is $F_{i,j}$ is the set of $x\in S_i$ such that the first member of  $\SS$ it hits after leaving $S_i$ is $S_j$. 
\end{df}

\begin{lem}\label{lem:F_ij_bounded_below}
If $x\in F_{i,j}$, then $t_{i,j}(x)>\gamma$. 
\end{lem}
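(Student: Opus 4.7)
The plan is to apply the trivial observation recalled at the end of Subsection 2.3 (namely, that if $y$ belongs to a cross-section $T$ of injectivity time $\mu>0$ and $\Phi_s(y)\in T$ for some $s\neq 0$, then $|s|>2\mu$) to the cross-section $\mathcal{G}=\bigcup_{i=1}^{N}S_i$. By the very definition of $\gamma$ recorded in Definition \ref{def:i_th return map}, $\mathcal{G}$ is a cross-section of injectivity time $\gamma$, so I will apply the observation with $T=\mathcal{G}$ and $\mu=\gamma$.

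Concretely, let $x\in F_{i,j}$. Then $x\in S_i\subset\mathcal{G}$ and $\Phi_{t_{i,j}(x)}(x)\in S_j\subset\mathcal{G}$, with $t_{i,j}(x)>0$ by the definition of $t_{i,j}$. Applying the trivial observation to the pair $x,\Phi_{t_{i,j}(x)}(x)\in\mathcal{G}$ yields $t_{i,j}(x)>2\gamma$, and in particular $t_{i,j}(x)>\gamma$ as required.

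There is no real obstacle here: the condition $x\in F_{i,j}$ (rather than just $x\in D_{i,j}$) is not even used in this argument, because the lower bound $2\gamma$ is a purely kinematic consequence of the injectivity of $\Phi$ on $\mathcal{G}\times[-\gamma,\gamma]$. The role of the lemma is presumably to record a uniform positive lower bound on hitting times within $F_{i,j}$ that will be used later (e.g.\ to control how often an orbit can visit $\mathcal{G}$ on a time interval of bounded length), so the actual content of the proof is merely to pin down where the constant $\gamma$ comes from and invoke the one-line observation about injectivity times.
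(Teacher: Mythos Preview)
Your proof is correct and follows essentially the same approach as the paper: both use that $\mathcal{G}$ is a cross-section of injectivity time $\gamma$, so any positive return time to $\mathcal{G}$ exceeds $2\gamma>\gamma$. The paper phrases this via the identity $t_{i,j}(x)=t_{\mathcal{G}}(x)$ on $F_{i,j}$ and the bound $t_{\mathcal{G}}\ge 2\gamma$, whereas you apply the injectivity observation directly and (correctly) note that the hypothesis $x\in F_{i,j}$ can be weakened to $x\in D_{i,j}$.
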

\begin{proof}
 Note  $t_\mathcal{G}$ is bounded  from below by $2\gamma$.
\end{proof}

\begin{lem}\label{lem:F_covers}
For all $1\leq i\leq N$, it holds that  $S_i=\cup_{j=1}^N F_{i,j}$.
\end{lem}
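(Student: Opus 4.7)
Plan: Fix $x \in S_i$. The goal is to exhibit an index $j$ with $x \in F_{i,j}$, which requires producing $j$ such that (a) the first positive hitting of $S_j$ from $x$ actually occurs, (b) this hitting time is bounded by $\eta$ (so $x \in D_{i,j}$), and (c) this hitting time equals the overall first return time $t_\mathcal{G}(x)$ to $\mathcal{G} = \bigsqcup_k S_k$. The entire proof hinges on controlling $t := t_\mathcal{G}(x)$, and I would first establish $2\gamma \leq t \leq \alpha < \eta$.

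For the upper bound, I would exploit the strengthened completeness $\Phi_{[-\alpha,0]}(\mathcal{G}) = X$ furnished by Lemma \ref{lem:complete family appendix}: applied to $\Phi_{\delta}(x)$ for any $\delta > 0$, it produces $s_\delta \in [0,\alpha]$ with $\Phi_{s_\delta+\delta}(x) \in \mathcal{G}$, so the set $A_x := \{t > 0 : \Phi_t(x) \in \mathcal{G}\}$ contains a point in $(0,\alpha+\delta]$. For the lower bound, I would invoke the elementary cross-section observation recorded just after Definition \ref{df:cross-section}: since $x \in \mathcal{G}$ and $\mathcal{G}$ is a cross-section of injectivity time $\gamma$, every element of $A_x$ is $\geq 2\gamma$. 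Consequently $A_x \subset [2\gamma,\infty)$ is closed in $\mathbb{R}$ (as the restriction of $\Phi_\cdot(x)^{-1}(\mathcal{G})$ to $[2\gamma,\infty)$), and a routine compactness argument sending $\delta \to 0^+$ yields $t \in A_x \cap [2\gamma,\alpha]$; in particular $t = \min A_x \leq \alpha < \eta$.

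Having $t$ in hand, set $y := \Phi_t(x) \in \mathcal{G}$. Pairwise disjointness of $S_1,\ldots,S_N$ selects a unique $j$ with $y \in S_j$, so $t_{i,j}(x) \leq t$ by definition of the hitting time, while $S_j \subset \mathcal{G}$ forces $t_{i,j}(x) \geq t_\mathcal{G}(x) = t$. Thus $t_{i,j}(x) = t \leq \eta$, giving $x \in D_{i,j}$, and the equality $t_{i,j}(x) = t_\mathcal{G}(x)$ is exactly the condition defining $F_{i,j}$. The main obstacle is the bound $t_\mathcal{G}(x) \leq \eta$; everything else is a direct unwinding of definitions together with disjointness of $\mathcal{S}$. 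That bound depends on marrying the two-sided completeness $\Phi_{[-\alpha,0]}(\mathcal{G}) = \Phi_{[0,\alpha]}(\mathcal{G}) = X$ with the universal injectivity-based lower bound on positive return times.
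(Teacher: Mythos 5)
Your proof is correct and follows essentially the same route as the paper: both use the completeness condition $\Phi_{[-\alpha,0]}(\mathcal{G})=X$ to bound the first return time to $\mathcal{G}$ by $\alpha<\eta$ and then take $j$ to be the label of the cross-section hit at that first return. Your limiting argument with $\delta\to 0^+$ is in fact slightly more careful than the paper's one-line appeal to completeness, since it rules out the degenerate case $t=0$ (which would force $j=i$ and $D_{i,i}=\emptyset$).
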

\begin{proof}
Let $x\in S_i$. As $\Phi_{[-\alpha, 0]}\mathcal{G}=X$, there exists some $j$, $t\in [0,\alpha]$ and $y\in S_j$ so that  $\Phi_{-t}y=x$, i.e., $\Phi_{t}x=y\in S_j$ . Thus $t_{i,j}(x)\leq \alpha<\eta$ and $x\in D_{i,j}$. Let $K=\{1\leq k\leq N|\, x\in  D_{i,k}\}$. Note $j\in K$. Clearly there is $k\in K$ so that $t_{i,k}=\min_{s\in K} t_{i,s}$. Conclude $x\in  F_{i,k}$.
\end{proof}

\begin{lem}\label{lem:identify_t_ij}
Let $1\leq i,j\leq N$.
\begin{enumerate}

    \item 
If $x\in S_i'$, $y\in S_j'$ and $\Phi_r(x)=y$ for some $0\leq r\leq 2\eta$, then $t_{i,j}'(x)=r$.
\item
If $x\in D_{i,j}$, then $t_{i,j}'(x)=t_{i,j}(x)$.
\end{enumerate}
\end{lem}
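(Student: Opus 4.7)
The plan is to prove part (1) first via the injectivity-time property of the cross-section $S_j'$, and then derive part (2) as an immediate corollary using the inclusions $S_i\subset \In(S_i')\subset S_i'$ and $S_j\subset S_j'$ coming from the setup in Section \ref{sec:notations}.

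For part (1), I begin by observing that since $\Phi_r(x)=y\in S_j'$ with $r>0$, the definition of $t_{i,j}'$ immediately yields $t_{i,j}'(x)\leq r$. Suppose for contradiction that $t_{i,j}'(x)=r'$ with $0<r'<r$. Set $z:=\Phi_{r'}(x)\in S_j'$, so that $\Phi_{r-r'}(z)=\Phi_r(x)=y\in S_j'$ as well. Thus $z$ and $\Phi_{r-r'}(z)$ both lie in $S_j'$, and $0<r-r'\leq 2\eta$. But $S_j'$ is a cross-section of injectivity time $\eta_{\SS'}=\eta$, so the trivial observation recalled right after the definition of a complete family (namely, if $z\in S$ and $\Phi_t(z)\in S$ for $t\neq 0$ then $|t|>2\eta$) gives $r-r'>2\eta$, a contradiction. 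Hence $t_{i,j}'(x)=r$. (The case $r=0$ is vacuous: disjointness of the family $\SS'$ would force $i=j$, but then $t_{i,i}'(x)>0$ by definition, so the hypothesis already implicitly assumes $r>0$.)

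For part (2), set $r:=t_{i,j}(x)\in(0,\eta]$; this is a positive real since $D_{i,i}=\emptyset$ forces $i\ne j$, and in any case $t_{i,j}(x)\leq\eta$ by definition of $D_{i,j}$. Then $x\in S_i\subset S_i'$ and $\Phi_r(x)\in S_j\subset S_j'$, with $0<r\leq\eta\leq 2\eta$. Applying part (1), I get $t_{i,j}'(x)=r=t_{i,j}(x)$, as required.

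I expect no significant obstacle here: the whole argument is a single application of the injectivity-time observation recalled in Section 2.3, combined with the inclusions built into the definition of the pair $(\SS,\SS')$. The only subtle point worth stating explicitly is the reduction of part (2) to part (1), which hinges on the bound $t_{i,j}(x)\leq\eta$ built into the definition of $D_{i,j}$ (so that $r\leq 2\eta$, the regime where the injectivity argument bites).
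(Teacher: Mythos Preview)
Your proof is correct and follows essentially the same approach as the paper: both argue by contradiction from $t_{i,j}'(x)<r$ using the injectivity-time property of $S_j'$, and both deduce (2) from (1) via the inclusions $S_i\subset S_i'$, $S_j\subset S_j'$ together with the bound $t_{i,j}(x)\le\eta$ built into $D_{i,j}$. Your explicit invocation of the ``$|t|>2\eta$'' observation and your remark on the degenerate case $r=0$ are minor elaborations not present in the paper's terse version, but the substance is identical.
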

\begin{proof}
  For (1) assume for a contradiction that $t_{i,j}'(x)<r$. It follows that $\Phi(y, r-t_{i,j}'(x))\in S_j'$ and $|t_{i,j}'(x)-r|\leq 2\eta$. Since $\Phi$ is injective on $S_j'\times [-\eta, \eta]$, this is a contradiction. 
  
  For (2) note that by definition $t_{i,j}(x)\leq \eta$ and  $\Phi_{t_{i,j}(x)}(x)\in S_j'$, thus by (1), $t_{i,j}'(x)=t_{i,j}(x)$.
\end{proof}

\begin{prop}\label{Prop:T_homeomorphism}
	 Let $x\in D_{i,j}$. Then there is an open neighborhood $V$ of $x$ in $S_i'$ with $\overline{V}\subset \In S_i'$ such that $t_{i,j}'$ is continuous on $V$, $t_{i,j}'(x)\leq 2\eta$ for all $x\in V$ and $T_{i,j}'|_{V}:V\rightarrow S_j'$ is an open map and $T_{i,j}'|_{V}:V\rightarrow T_{i,j}'(V)$ is a homeomorphism.
\end{prop}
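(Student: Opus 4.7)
The overall strategy is to build a flow box near $y := \Phi_{t_{i,j}(x)}(x) \in S_j$ in order to define a continuous local hitting function $\tau$ to $S_j'$, to identify $\tau$ with $t_{i,j}'$ on a sufficiently small relative neighborhood $V$ of $x$ in $S_i'$, and finally to verify injectivity, continuity of the inverse, and the open-map property via symmetric flow-box arguments. By Lemma \ref{lem:identify_t_ij}(2), $s := t_{i,j}'(x) = t_{i,j}(x) \le \eta$, while $y \in S_j \subset \In S_j'$ and $x \in S_i \subset \In S_i'$. I may assume $i \ne j$: otherwise $D_{i,i} = \emptyset$ and the statement is vacuous; in particular $x \notin S_j'$.

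For the flow box, I would choose $\gamma \in (0, \eta/2)$ small and an open neighborhood $W$ of $y$ in $X$ with $\overline{W} \subset \Phi_{(-\gamma, \gamma)}(\In S_j')$. Using injectivity of $\Phi$ on $S_j' \times [-\eta, \eta]$, each $u \in W$ admits a unique decomposition $u = \Phi_{\rho(u)}(\zeta(u))$ with $\rho(u) \in (-\gamma, \gamma)$ and $\zeta(u) \in \In S_j'$, and a standard compactness argument shows $\rho, \zeta$ are continuous on $W$. By continuity of $\Phi_s$ and since $x \in \In S_i'$ is relatively open in $S_i'$, I pick a relatively open neighborhood $V$ of $x$ in $S_i'$ with $\overline{V} \subset \In S_i'$ and $\Phi_s(\overline{V}) \subset W$, and set $\tau(x') := s - \rho(\Phi_s(x'))$. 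This defines a continuous function $\tau : V \to (s-\gamma, s+\gamma) \subset (0, 2\eta)$ with $\Phi_{\tau(x')}(x') = \zeta(\Phi_s(x')) \in S_j'$.

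The main obstacle will be to show $\tau = t_{i,j}'$ on $V$, after possibly shrinking $V$. The inequality $t_{i,j}'(x') \le \tau(x')$ is immediate. If equality failed arbitrarily close to $x$, I could extract a sequence $x_n \to x$ in $V$ with $r_n := t_{i,j}'(x_n) < \tau(x_n)$ and a convergent subsequence $r_n \to r^* \in [0, s+\gamma]$; continuity of $\tau$ gives $\tau(x_n) \to s$, hence $r^* \le s$. The case $r^* = 0$ is ruled out because closedness of $S_j'$ would force $x \in S_j'$, contradicting $i \ne j$. If $r^* > 0$, closedness gives $\Phi_{r^*}(x) \in S_j'$, so $r^* \ge t_{i,j}'(x) = s$ and thus $r^* = s$. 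But then eventually $r_n \in (s-\gamma, s+\gamma)$, and the uniqueness of the flow-box decomposition of $\Phi_s(x_n) \in W$ (via injectivity of $\Phi$ on $S_j' \times [-\eta,\eta]$ and $2\gamma < \eta$) forces $r_n = \tau(x_n)$, contradicting $r_n < \tau(x_n)$.

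For the remaining properties, injectivity of $T := T_{i,j}'|_V$ follows since $T(x_1) = T(x_2)$ gives $x_1 = \Phi_{\tau(x_2) - \tau(x_1)}(x_2)$ with $x_1, x_2 \in S_i'$ and $|\tau(x_2) - \tau(x_1)| < 2\gamma < \eta$, so injectivity of $\Phi$ on $S_i' \times [-\eta, \eta]$ yields $x_1 = x_2$. Continuity of $T^{-1}$ on $T(V)$ follows from a symmetric compactness argument using the same injectivity. For the open-map property it suffices to show $T(V)$ is open in $S_j'$, since $T$ is then a homeomorphism onto an open subset: for any $x_0' \in V$ and $\bar{x}''$ in $S_j'$ close to $T(x_0')$, a flow box around $x_0' \in \In S_i'$ allows one to decompose $\Phi_{-\tau(x_0')}(\bar{x}'')$ as $\Phi_{-r''}(z'')$ with $z'' \in S_i'$ close to $x_0'$; for $\bar{x}''$ close enough, $z'' \in V$ and $T(z'') = \bar{x}''$, showing $T(V)$ is open.
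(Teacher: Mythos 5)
Your proof is correct and follows essentially the same route as the paper's: both rest on the injectivity of $\Phi$ on $S_j'\times[-\eta,\eta]$ near $T_{i,j}(x)$ (together with Lemma \ref{lem:identify_t_ij}) to pin down the hitting time, a sequential compactness/contradiction argument for its continuity, and a reverse flow box on the $S_i'$ side for the open-map property. Packaging the neighborhood $W\subset\Phi_{(-\gamma,\gamma)}(\In S_j')$ into an explicit continuous chart $(\rho,\zeta)$ and identifying $t_{i,j}'$ with $\tau=s-\rho\circ\Phi_s$ is only a cosmetic variant of the paper's two-sided inequality; just note that you implicitly need $\gamma<t_{i,j}(x)$ so that $\tau>0$.
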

\begin{proof}
	Let $0<\epsilon<\eta$.
	Since $T_{i,j}(x)\in S_j\in \In S_j'$, there is a $\kappa>0$ such that the open ball $B(T_{i,j}(x), \kappa)$ is contained in $\Phi_{(-\epsilon, \epsilon)}(\In S_j')$. Since $\Phi$ is continuous, there is an $\delta>0$ such that for any $y\in X$ with $\dist(x,y)<\delta$, we have that $\Phi(y, t_{i,j}(x))\in B(T_{i,j}(x), \kappa)\subset \Phi_{(-\epsilon, \epsilon)}(\In S_j')$. Let $V_{\delta}=\{y\in S_i': \dist(x,y)<\delta\}$ which is an open neighborhood of $x$ in $S_i'$. As $x\in S_i$, $\dist(x, \Pa S_i')>0$, thus taking $\delta$ small enough, one may assume w.l.o.g.\ that $\overline{V_{\delta}}\subset \In S_i'$. It follows by Lemma \ref{lem:identify_t_ij} that
	\begin{equation}\label{eq:two_sided_ineq}
	t_{i,j}(x)-\epsilon <t_{i,j}'(y)<t_{i,j}(x)+\epsilon,   
	\end{equation}
	 whenever $y\in V_{\delta}$. In particular $t_{i,j}'(y)< 2\eta$ for all $y\in V_\delta$  and $T_{i,j}'(V)\subset \In S_j'$. We claim that after taking $\delta$ small enough, the map $t_{i,j}'$ is continuous on $V=V_{\delta}$. If not, then there exists a sequence $\{y_n\}_{n\in \N}\subset V$ such that $\lim\limits_{n\to \infty}y_n=x$  but $t_x:=\lim\limits_{n\to \infty}t_{i,j}'(y_n)\not=t_{i,j}(x)$. As by definition $0<t_{i,j}(x)\leq \eta$ and by choice $\epsilon<\eta$, one has that $t_x\le t_{i,j}(x)+\epsilon<2\eta$ and $\Phi(x, t_x)\in S_j'$. It follows that $\Phi(T_{i,j}(x), t_x-t_{i,j}(x))\in S_j'$ and $|t_x-t_{i,j}(x)|<2\eta$. Since $\Phi$ is injective on $S_j'\times [-\eta, \eta]$, this is a contradiction. Let $z=T_{i,j}'(x')\in \In S_j'$ for some $x'\in V$. Let $W_{\rho}=\{y\in S_j': \dist(z,y)<\rho\}$ be an open neighborhood of $z$ in $S_j'$. Using Equation \eqref{eq:two_sided_ineq} with $y=x'$, for $\rho>0$ small enough, there is $0<\xi<\eta$ such that  $\Phi_{-t_{i,j}(x)}(\Phi_{(-\rho, \rho)}W_{\rho})$ is an open set in $\Phi_{(-\xi, \xi)}(V)$. Thus for each $w\in W_{\rho}$, there are unique $v\in V$, $r\in (-\xi, \xi)$, such that $\Phi_{t_{i,j}(x)-r}(v)=w$. By Lemma \ref{lem:identify_t_ij}, one must have $t_{i,j}'(v)=t_{i,j}(x)-r$ and $T_{i,j}'(v)=w$. This implies $W_\rho\subset T_{i,j}'(V)$ which implies $T_{i,j}'|_{V}:V\rightarrow S_j'$ is an open map. In addition, as $t_{i,j}'$ is continuous on $V$ so is $T_{i,j}'|_{V}$. Thus in order to establish that $T_{i,j}'|_{V}:V\rightarrow T_{i,j}'(V)$ is a homeomorphism, it is enough to show that $T_{i,j}'|_{V}$ is injective. Indeed if it holds for $z_1,z_2\in V$, that $T_{i,j}'(z_1)=T_{i,j}'(z_2)$, that is $\Phi_{t_{i,j}'(z_1)}(z_1)=\Phi_{t_{i,j}'(z_2)}(z_2)$, then $z_1=\Phi_{t_{i,j}'(z_2)-t_{i,j}'(z_1)}(z_2)$ which implies  $|t_{i,j}'(z_2)-t_{i,j}'(z_1)|>2\eta$, contradicting Equation \eqref{eq:two_sided_ineq}. 
		Q.E.D.
\end{proof}

\subsection{Return time sets and cross-section names}\label{sec:return_names}

\begin{prop}\label{prop:C_ij}
For every $1\leq i,j\leq N$ there is a finite collection of open sets $\CC_{i,j}$ in $S_i'$ with the following properties:
\begin{enumerate}
    \item $F_{i,j}\subset \bigcup \CC_{i,j}\subset \overline{ \bigcup \CC_{i,j}}\subset \In S_i'$.
    \item For all $V\in \CC_{i,j}$,  $t_{i,j}'$ is continuous on $V$ and $t_{i,j}'(x)\leq 2\eta$ for all $x\in V$.
    \item For all $V\in \CC_{i,j}$, $(T_{i,j}')_{|V}:V\rightarrow T_{i,j}'(V)$ is a homeomorphism and $T_{i,j}'(V)$ is open in $S_j'$.
   \end{enumerate}
\end{prop}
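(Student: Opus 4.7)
The plan is to treat $i=j$ trivially and then, for $i\neq j$, to apply Proposition \ref{Prop:T_homeomorphism} pointwise on the compact closure $\overline{F_{i,j}}$ and extract a finite subcover. When $i=j$, any $x\in S_i$ whose orbit returns to $S_i$ at some time $t\neq 0$ must satisfy $|t|>2\eta$ by the injectivity time assumption (the trivial observation recorded right after the definition of complete family). Hence $D_{i,i}=\emptyset$, $F_{i,i}=\emptyset$, and the choice $\CC_{i,i}:=\emptyset$ already satisfies the statement.

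For the main case $i\neq j$, I would first establish that $D_{i,j}$ is closed in $S_i$; this is the technical heart of the argument. Given a sequence $x_n\in D_{i,j}$ with $x_n\to x\in S_i$, set $t_n:=t_{i,j}(x_n)\in(0,\eta]$ and, after passing to a subsequence, assume $t_n\to t\in[0,\eta]$. Continuity of $\Phi$ together with the closedness of $S_j$ forces $\Phi_t(x)\in S_j$. The possibility $t=0$ is excluded because it would give $x\in S_i\cap S_j$, contradicting the disjointness of the family $\SS$. Hence $0<t\leq\eta$ and $x\in D_{i,j}$. Since $F_{i,j}\subset D_{i,j}$, it follows that $\overline{F_{i,j}}\subset D_{i,j}$, and this set is compact as a closed subset of $S_i$.

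Next, for each $x\in\overline{F_{i,j}}\subset D_{i,j}$ I would invoke Proposition \ref{Prop:T_homeomorphism} to obtain an open neighborhood $V_x$ of $x$ in $S_i'$ with $\overline{V_x}\subset\In(S_i')$, on which $t_{i,j}'$ is continuous and bounded above by $2\eta$, and such that $T_{i,j}'|_{V_x}$ is a homeomorphism onto an open subset of $S_j'$. By compactness of $\overline{F_{i,j}}$, finitely many $V_{x_1},\ldots,V_{x_k}$ already cover it, and I set $\CC_{i,j}:=\{V_{x_1},\ldots,V_{x_k}\}$. Properties (2) and (3) then hold on each $V\in\CC_{i,j}$ by construction. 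Property (1) follows from $F_{i,j}\subset\overline{F_{i,j}}\subset\bigcup\CC_{i,j}$ together with the finiteness of the family, which lets the closure pass through the union, giving $\overline{\bigcup\CC_{i,j}}=\bigcup_{\ell=1}^{k}\overline{V_{x_\ell}}\subset\In(S_i')$.

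The main obstacle, as indicated, is the closedness of $D_{i,j}$ in $S_i$: one must rule out the degenerate limit $t=0$, and this is exactly where the disjointness of the cross-sections in $\SS$ is essential and where the asymmetric treatment of $i=j$ becomes visible. Once this is in place, the rest of the argument is a routine local-to-finite compactness reduction piggybacking on Proposition \ref{Prop:T_homeomorphism}.
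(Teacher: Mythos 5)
Your proof is correct and follows essentially the same route as the paper's: establish $\overline{F_{i,j}}\subset D_{i,j}$ by a compactness/limit argument on the return times, then cover $\overline{F_{i,j}}$ by the neighborhoods from Proposition \ref{Prop:T_homeomorphism} and extract a finite subcover. The only cosmetic difference is how the degenerate limit $t=0$ is excluded — you use disjointness of the cross-sections (treating $i=j$ separately), while the paper implicitly relies on the lower bound $t_{i,j}(x_q)>\gamma$ for $x_q\in F_{i,j}$ from Lemma \ref{lem:F_ij_bounded_below} — and both are valid.
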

\begin{proof}
Fix $1\le i,j\le N$.  Let $x\in \overline{F_{i,j}}$. Let $\{x_q\}_{q=1}^{\infty}\subset F_{i,j}\subset D_{i,j}$ so that $x_q\rightarrow_{q\rightarrow \infty} x$. As $t_{i,j}(x_q)\leq \eta$, we may assume w.l.o.g.\ $t_{i,j}(x_q)$ converges to some $t\leq \eta$. Clearly $\Phi_t(x)\in S_j$ and therefore $x\in D_{i,j}$. Conclude $\overline{F_{i,j}}\subset D_{i,j}\subset S_{i}\subset \In S'_i$. Using Proposition \ref{Prop:T_homeomorphism} cover $\overline{F_{i,j}}$ by a finite collection of open sets $\CC_{i,j}$ such that  $\bigcup\CC_{i,j}\subset \overline{ \bigcup \CC_{i,j}}\subset \In S_i'$. with properties  (2) and (3).      
\end{proof}
For each $C\in \CC_{i,j}$, we define $T_C:C\to S_j'$ by $x\mapsto T_{i,j}'(x)$. For $n\in \N\cup \{0\}$, $\textbf{i}=(i_k)_{0\le k\le n}\in\{1,2,\dots N\}^{n+1}$ and $C^n=(C_0, C_1, \dots, C_{n-1}, C_n)\in \CC(\textbf{i}):=\prod_{k=0}^{n-1}\CC_{i_k, i_{k+1}}\times \{\CC_{i_n,j} \}_{1\le j\le N}$, we define 
$$
T_{C^n}^k=
\begin{cases}
T_{C_{k-1}}\circ \cdots \circ T_{C_0}~&\text{for}~1\le k\le n;\\
\text{Id}_{C_0} &\text{for}~k=0;
\end{cases}
$$
on the set
$$
Z_{C^n}:=\{x\in C_0: T_{C_{k-1}}\circ \cdots \circ T_{C_0}(x)\in C_k~\text{for}~1\le k\le n  \}.
$$
Let $x\in Z_{C^n}$. The set $Z_{C^n}$ is called an  $n$-\textbf{cross-section name} of $x$. Note $x$  may have more than one  $n$-cross-section name for a given $n \in \N$.

Let $V\subset X$, not necessarily contained in the image of $T_{C^n}$. Following standard notation, we denote 
\begin{equation}\label{eq:T_C_n}
T_{C^n}^{-k}(V):=\{ x\in Z_{C^n}: T_{C^n}^k(x)\in V\}=(T_{C^n}^{k})^{-1}(V\cap T_{C^n}^k(Z_{C^n})).
\end{equation} 

Let $n\in \N\cup \{0\}$. Denote $$\CC(n)= \cup_{\textbf{i}\in \{1,2,\dots N\}^{n+1}} \CC(\textbf{i}).$$

$$\CC_i=\cup_{1\leq j\leq N} \CC_{i,j}.$$

\begin{df}\label{def:I_C_n}
    
Let $n\in \N\cup \{0\}$. Define the \textbf{return time set}:
$$
I_{C^n}=\{0\le k\le n: i_{k}=i_0 \}.
$$
It follows that the image of $T_{C^n}^k$ is contained in $S_{i_0}'$ for $k\in I_{C^n}$. Note $0\in I_{C^n}$.  
\end{df}

\begin{rem}
 Note that $\CC(0)=\bigcup_{1\le i,j\le N} \CC_{i,j}$.
Note that for $C^0=(C_0)\in \CC(0)$ it holds $I_{C^0}=\{0\}$, $Z_{C^0}=C_0$ and  $T_{C^0}^0=\text{Id}_{C_0}$.

\end{rem}

\begin{lem}\label{lem:F_sigma_invariance}
Let $n\in \N$, $C^n\in \CC(n)$ and $1\leq k\leq n$. Then the following holds:
\begin{enumerate}
    \item 
$Z_{C^n}$ and $T_{C^n}^{k}(Z_{C^n})$ are open;  $T_{C^n}^{k}$ is a homeomorphism on $Z_{C^n}$.
\item
Let  $V$ be $F_\sigma$. Then $T_{C^n}^{-k}(V)$ is $F_\sigma$.
\end{enumerate}
\end{lem}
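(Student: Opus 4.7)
The plan is to establish (1) by an induction on $k$ that tracks the successive partial compositions, and then derive (2) from (1) together with the fact that $Z_{C^n}$ is itself $F_\sigma$ in $X$.

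For (1), I introduce for each $0 \le k \le n$ the auxiliary set
\[
W_k := \{x \in C_0 : T_{C^n}^j(x) \in C_j \text{ for all } 1 \le j \le k\},
\]
so that $W_0 = C_0$ and $W_n = Z_{C^n}$. The inductive claim is that $W_k$ is open in $S_{i_0}'$ and that $T_{C^n}^k|_{W_k}$ is a homeomorphism onto the open subset $T_{C^n}^k(W_k) \subset S_{i_k}'$. The base case $k=0$ is immediate from Proposition \ref{prop:C_ij}(1). For the inductive step, Proposition \ref{prop:C_ij}(3) gives that $T_{C_k} : C_k \to T_{C_k}(C_k)$ is a homeomorphism of open subsets of $S_{i_k}'$ and $S_{i_{k+1}}'$, so $U_k := T_{C_k}^{-1}\bigl(C_{k+1} \cap T_{C_k}(C_k)\bigr)$ is open in $S_{i_k}'$, and
\[
W_{k+1} = (T_{C^n}^k|_{W_k})^{-1}(U_k)
\]
is open in $W_k$ by the inductive homeomorphism property; the composition $T_{C^n}^{k+1}|_{W_{k+1}} = T_{C_k} \circ T_{C^n}^k|_{W_{k+1}}$ is then a homeomorphism onto an open subset of $S_{i_{k+1}}'$. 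Specializing to $W_n = Z_{C^n}$ gives the openness of $Z_{C^n}$ and the homeomorphism claim for $k = n$; for intermediate $k$, since $Z_{C^n}$ is open in $W_k$ and $T_{C^n}^k|_{W_k}$ is a homeomorphism, $T_{C^n}^k(Z_{C^n})$ is open in $T_{C^n}^k(W_k)$, hence open in $S_{i_k}'$.

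For (2), I will use that $T_{C^n}^k|_{Z_{C^n}}$ is continuous as a map into $X$: indeed each $T_{C_j}$ is a restriction of $x \mapsto \Phi(x, t_{i_j,i_{j+1}}'(x))$, which is continuous by Proposition \ref{prop:C_ij}(2). Writing $V = \bigcup_m F_m$ with each $F_m$ closed in $X$, continuity implies that $T_{C^n}^{-k}(F_m)$ is closed in the subspace $Z_{C^n}$, so has the form $G_m \cap Z_{C^n}$ for some $G_m$ closed in $X$. Because $S_{i_0}'$ is closed in $X$ and $Z_{C^n}$ is relatively open in $S_{i_0}'$ by part (1), Lemma \ref{lem:clo_int_open_is_F_sigma} yields that $Z_{C^n}$ is $F_\sigma$ in $X$; hence so is $G_m \cap Z_{C^n}$, and finally $T_{C^n}^{-k}(V) = \bigcup_m (G_m \cap Z_{C^n})$ is $F_\sigma$ as a countable union of $F_\sigma$ sets.

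The only delicate point is the bookkeeping in the inductive step of (1), checking that the relevant open-set pull-backs are genuinely well-defined and that the compositions inherit the homeomorphism property; there is no conceptual obstacle beyond this, and once (1) is in place, (2) follows cleanly from continuity of $T_{C^n}^k$ together with the $F_\sigma$ structure of $Z_{C^n}$.
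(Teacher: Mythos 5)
Your proof is correct and follows essentially the same route as the paper: part (1) is the same induction on the partial compositions, checking at each stage that the domain and range are open and that the partial map is a homeomorphism via Proposition \ref{prop:C_ij}. For part (2) the paper pulls the $F_\sigma$ set $V\cap T_{C^n}^{k}(Z_{C^n})$ back through the inverse homeomorphism, whereas you pull closed sets back by continuity and intersect with $Z_{C^n}$, which is $F_\sigma$ by Lemma \ref{lem:clo_int_open_is_F_sigma}; this is a minor (and slightly more careful) variant of the same argument.
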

\begin{proof}
The first claim is best understood by considering the first cases. Indeed note $T_{C^n}^1=T_{C_0}:C_0\rightarrow T_{i_0,i_1}'(C_0)$ and $T_{C^n}^2:T_{C_0}^{-1}(T_{C_0}(C_0)\cap C_1)\rightarrow T_{C_1}(T_{C_0}(C_0)\cap C_1)$, both have a  domain which is an an open set in $S_{i_0}'$ and a range which is an open set in $S_{i_1}'$ and $S_{i_2}'$ respectively by Proposition \ref{Prop:T_homeomorphism}. Now assume we have proven that the domain of $T_{C^n}^k$ is an open set $D_k$ in $S_{i_0}'$ and that the range of $T_{C^n}^k$ is an open set $R_k$ in $S_{i_k}'$. It is easy to see that the domain of $T_{C^n}^{k+1}$  is an open set in $D_k$  and therefore in $S_{i_0}'$,  $(T_{C_n}^k)^{-1}(R_k\cap C_k)$ and that the range of $T_{C^n}^{k+1}$ is an open set in $S_{i_{k+1}}'$,  $T_{C_k}(R_k\cap C_k)$. Note that $Z_{C^n}$ equals the domain of $T_{C^n}^{n}$ which we have seen to be open. Using  Proposition \ref{Prop:T_homeomorphism}(3), we have that $T_{C^n}^{k}(Z_{C^n})$ is open. By the above $T_{C^n}^{k}$ is a homeomorphism on an open domain which contains $Z_{C^n}$ and therefore is  a homeomorphism on $Z_{C^n}$. 

For Claim $(2)$, note $T_{C^n}^{-k}(V)=(T_{C^n}^{k})^{-1}(V\cap T_{C^n}(Z_{C^n}))$ and $T_{C^n}(Z_{C^n})$ is open by (1). Thus $V\cap T_{C^n}(Z_{C^n})$ is $F_\sigma$ and on this set $T_{C^n}^{-k}$
is a homeomorphism.
\end{proof}

Using Proposition \ref{prop:C_ij} let $\delta_0>0$ small enough so that $2\delta_0$ is a Lebesgue number for each of the open covers  $\CC_{i,j}$ (of $F_{i,j}$). Thus for all $1\leq i,j\leq N$,  
 \begin{equation}\label{eq:delta_approx}
F_{i,j}\subset \bigcup_{V\in \CC_{i,j}}\overline{\Theta}^{S_i'}_{-\delta_0}(V).     
 \end{equation}
 From now on in this section we abbreviate $\overline{\Theta}_{\cdot}(\cdot)=\overline{\Theta}^{S_1'}_{\cdot}(\cdot)$.
\begin{df}\label{def:delta_0}
We define
 $$
Z_{C^n}^{\delta_0}:=\{x\in \overline{\Theta}_{-\delta_0}(C_0): T_{C_{k-1}}\circ \cdots \circ T_{C_0}(x)\in \overline{\Theta}_{-\delta_0}(C_k)~\text{for}~1\le k\le n  \}.
$$
This is clearly a closed set.
\end{df}

Recall that $\dim(X) = d + 1$. The following lemma will be important in the sequel:
\begin{lem}\label{lem:internal_return}
Let $x\in S_1$ so that there are $0=t_0<t_1<t_2<\ldots<t_{d}$ with $\Phi_{t_k}x\in S_1$ for $k=0,\ldots, d$.  Then there exists  $C^n=(C_0, C_1, \dots, C_{n-1}, C_n)\in \CC(n)$ for some $n\in\N$, and $j_0=0<j_1<\ldots <j_d$ such that $x\in Z_{C^n}^{\delta_0}$ and  $T_{C^n}^{j_i}x=\Phi_{t_i}x$ 
for $i=0,\ldots, d$. 
\end{lem}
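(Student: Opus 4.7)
The plan is to build the cross-section name by enumerating all hits of $\mathcal{G}=\bigcup_{i=1}^N S_i$ along the orbit segment $\Phi_{[0,t_d]}x$ (plus one extra hit past $t_d$). The key point is that between consecutive hits the elapsed time is at most $\eta$ (since $\bigcup_i\Phi_{[-\eta/2,\eta/2]}(S_i)=X$ forces $t_\mathcal{G}\le\eta$), while it is at least $2\gamma>0$ (since $\mathcal{G}$ is a cross-section of injectivity time $\gamma$). Concretely I would list the consecutive hitting times in $[0,t_d]$ as $0=s_0<s_1<\cdots<s_m=t_d$, let $s_{m+1}$ be the first hit of $\mathcal{G}$ strictly after $t_d$, and define $i_k\in\{1,\dots,N\}$ by $\Phi_{s_k}x\in S_{i_k}$ for $0\le k\le m+1$. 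Since each $\Phi_{t_i}x\in S_1\subset\mathcal{G}$, the sequence $t_0,\dots,t_d$ appears as a sub-enumeration $s_{j_0},\dots,s_{j_d}$ with $j_0=0$, $j_d=m$, and $i_{j_i}=1$.

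Next, I would set $n:=m$ and take $\textbf{i}=(i_0,\dots,i_n)$, so $\textbf{i}\in\{1,\dots,N\}^{n+1}$ with $i_0=1=i_n$. For each $0\le k\le n$ the point $\Phi_{s_k}x$ lies in $F_{i_k,i_{k+1}}$: indeed $\Phi_{s_{k+1}-s_k}(\Phi_{s_k}x)\in S_{i_{k+1}}$ with $s_{k+1}-s_k\le\eta$, so $\Phi_{s_k}x\in D_{i_k,i_{k+1}}$, and by the very choice of $s_{k+1}$ as the next hit of $\mathcal{G}$, $t_{i_k,i_{k+1}}(\Phi_{s_k}x)=t_\mathcal{G}(\Phi_{s_k}x)$. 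Applying the Lebesgue number inclusion \eqref{eq:delta_approx}, for each $k$ I can pick $C_k\in\CC_{i_k,i_{k+1}}$ such that $\Phi_{s_k}x\in\overline{\Theta}_{-\delta_0}(C_k)$, producing $C^n=(C_0,\dots,C_n)\in\CC(\textbf{i})$.

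Finally, I would verify by induction on $k$ that $T_{C^n}^k x=\Phi_{s_k}x$ for $0\le k\le n$. The base case is $T_{C^n}^0 x=x=\Phi_{s_0}x$. For the inductive step, assuming $T_{C^n}^k x=\Phi_{s_k}x$, we have $T_{C^n}^{k+1}x=T_{C_k}(\Phi_{s_k}x)=T_{i_k,i_{k+1}}'(\Phi_{s_k}x)$, and Lemma \ref{lem:identify_t_ij}(2) gives $t'_{i_k,i_{k+1}}(\Phi_{s_k}x)=t_{i_k,i_{k+1}}(\Phi_{s_k}x)=s_{k+1}-s_k$, whence $T_{C^n}^{k+1}x=\Phi_{s_{k+1}}x$. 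Combined with the containment $\Phi_{s_k}x\in\overline{\Theta}_{-\delta_0}(C_k)$, this proves $x\in Z_{C^n}^{\delta_0}$, and $T_{C^n}^{j_i}x=\Phi_{s_{j_i}}x=\Phi_{t_i}x$ for $0\le i\le d$, as required.

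The main subtlety is purely bookkeeping: because $\CC(\textbf{i})$ treats the last coordinate asymmetrically (the final entry must come from some $\CC_{i_n,j}$), one has to extend the enumeration by one step beyond $t_d$ so as to have a genuine ``next section'' $S_{i_{n+1}}$ available, and then everything clicks into place via Lemma \ref{lem:identify_t_ij} to identify the auxiliary first-hitting-time function $t_{i,j}'$ with the actual travel time along the orbit.
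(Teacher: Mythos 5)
Your proof is correct and follows essentially the same route as the paper's: decompose the orbit segment into successive returns to $\mathcal{G}$, place each return point in the appropriate $F_{i_k,i_{k+1}}$, choose $C_k$ via the Lebesgue-number inclusion \eqref{eq:delta_approx}, and identify $T_{C^n}^k x$ with the flow returns using Lemma \ref{lem:identify_t_ij}(2). The only cosmetic difference is that you enumerate the hits in $[0,t_d]$ up front (plus one extra for the last coordinate of $\CC(\mathbf{i})$), whereas the paper iterates first returns until $n\gamma\geq t_d$ and then extracts the subsequence; these are the same argument.
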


\begin{proof}
By Lemma \ref{lem:F_covers}, $x\in F_{1,j_1}$ for some $j_1$. By Equation \eqref{eq:delta_approx}, we may find $C_0\in \CC_{1,j_1}$ so that $x\in \overline{\Theta}_{-\delta_0}(C_0)$. A priori $T_{C_0}(x)\in S_{j_1}'$, however by Lemma \ref{lem:identify_t_ij}(2), $T_{C_0}(x)\in S_{j_1}$. Thus by Lemma \ref{lem:F_covers}, $T_{C_0}(x)\in F_{j_1,j_2}$ for some $j_2$.  Repeating the argument so that we may find $C_1\in \CC_{j_1,j_2}$ so that
$T_{C_0}(x)\in \overline{\Theta}_{-\delta_0}(C_1)$. Continuing inductively we construct a sequence $C^n:=(C_0,C_1,\ldots C_n)\in\CC(n)$ with the properties:   
\begin{enumerate}
    \item $x\in Z_{C^n}^{\delta_0}$.
    \item $T_{C^n}^k(x)=\Phi_{t_\mathcal{G}(T_{C^n}^{k-1}(x))}(T_{C^n}^{k-1}(x))$, for $k=1,\ldots,n$.
    \item $n\gamma\geq t_d.$
\end{enumerate}

Conditions (2) and (3) guarantee that there exist $j_0=0<j_1<\ldots <j_d$ such that  $T_{C^n}^{j_i}x=\Phi_{t_i}x$ 
for $i=0,\ldots, d$.
 
\end{proof}

\subsection{Statement of the Main Theorem}
The main result is as follows.

\begin{thm}[=Theorem A]\label{main thm}
	Let $X$ be a finite-dimensional space.
    Let $\Phi$ be a topological flow on $X$ without fixed points, having a countable number of periodic orbits. Then  $(X, \Phi)$ has the small flow boundary  property.
\end{thm}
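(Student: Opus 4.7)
By Lemma \ref{lem:small flow boundary}, given $x\in X$ and a closed cross-section $S'\ni x$ with $x\in \In S'$, it suffices to produce a closed subset $S\subset S'$ with $x\in \In S$ and $\cocap(\Pa S)=0$. Using Whitney's theorem (Theorem \ref{thm:Whitney}) together with the complete-family framework of Subsection \ref{sec:notations}, I may assume the existence of a closed cross-section $S_1\subset \In S'$ with $x\in \In S_1$, and work entirely inside $S_1$; by Lemma \ref{lem:dim_cross-section}, $\dim S_1\le d:=\dim X-1$. Let $P\subset S_1$ denote the countable (hence zero-dimensional) set of points on periodic orbits. By Theorem \ref{thm:R2} pick a zero-dimensional $F_\sigma$ set $E\subset S_1$ with $\dim(S_1\setminus E)\le d-1$; Corollary \ref{cor:sum_theorem_for_F_sigma} then gives that $E\cup P$ is zero-dimensional $F_\sigma$. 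Fix any open $W\subset S_1$ with $x\in W\subset\overline{W}\subset \In S_1$ and apply Proposition \ref{prop:avoiding zero_dim} with $M=S_1$, $C=\{x\}$ to obtain an open $U\subset W$ with $x\in U$ and $\partial_{S_1}U\cap(E\cup P)=\emptyset$. Setting $S:=\overline{U}$, Proposition \ref{prop:natural boundary} (applicable since $\overline{S}\cap\Pa S_1=\emptyset$) yields $\In S=\Int_{S_1}S\supset U\ni x$ and $\Pa S=\partial_{S_1}S\subset\partial_{S_1}U$. Thus $A:=\Pa S$ is a closed subset of $S_1\setminus(E\cup P)$, of dimension at most $d-1$, disjoint from every periodic orbit.

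The remaining task is the key lemma: for any closed $A\subset S_1$ with $\dim A\le d-1$ and $A\cap P=\emptyset$, one has $\cocap(A)=0$. Iterating Theorem \ref{thm:R2} inside $A$ decomposes it as a finite disjoint union $A=F_{d-1}\sqcup\cdots\sqcup F_0$ with each $F_j$ zero-dimensional $F_\sigma$ and still disjoint from $P$ (using Corollary \ref{cor:sum_theorem_for_F_sigma} at each step). Subadditivity of $\cocap$ (immediate from $\sup\#(A_1\cup A_2)\le\sup\#A_1+\sup\#A_2$) together with writing each $F_j$ as a countable union of closed sets reduces matters to the single assertion: $\cocap(A)=0$ whenever $A\subset S_1$ is closed, zero-dimensional, and disjoint from every periodic orbit.

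This zero-dimensional base case is the principal obstacle. The strategy is to exploit the cross-section-name machinery of Subsection \ref{sec:return_names}: for $x\in A$ whose forward orbit returns to $A$ at times $0=s_0<s_1<\cdots<s_m$, Lemma \ref{lem:internal_return} supplies a pattern $C^n\in\CC(n)$ and indices $j_0<\cdots<j_m$ in $I_{C^n}$ with $x\in Z_{C^n}^{\delta_0}$ and $T_{C^n}^{j_i}x=\Phi_{s_i}x\in A$ for all $i$, so that $x$ lies in the closed set $Z_{C^n}^{\delta_0}\cap\bigcap_{i=0}^m T_{C^n}^{-j_i}(A)$. Since $A$ is closed zero-dimensional it admits arbitrarily fine clopen partitions $\{A_\alpha\}$; by Lemma \ref{lem:F_sigma_invariance} each preimage $T_{C^n}^{-j_i}(A_{\alpha_i})$ is relatively clopen in the domain of $T_{C^n}^{j_i}$. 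A pigeonhole argument over the finite collection $\CC(n)$ and the finite partition, combined with a diagonal compactness extraction as $m\to\infty$, should produce a limit point whose orbit returns to $A$ through a periodically repeating assignment of clopen pieces, forcing it into $P\cap A$ and contradicting $A\cap P=\emptyset$. Converting this qualitative contradiction into the sharp sublinear bound $\sup_y\#\{0\le t<T:\Phi_t y\in A\}=o(T)$ required for $\cocap(A)=0$ is the essential difficulty, and I expect it to demand a Lindenstrauss--Weiss-style marker/tower argument adapted to the partial return homeomorphisms $T_{C_k}$ on the closed pieces $Z_{C^n}^{\delta_0}$ of Definition \ref{def:delta_0}, with aperiodicity away from $P$ providing the key input.
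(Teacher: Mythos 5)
Your reduction collapses at the key lemma. The assertion that every closed $A\subset S_1$ with $\dim A\le d-1$ and $A\cap P=\emptyset$ satisfies $\cocap(A)=0$ --- and in particular its zero-dimensional base case --- is false. Take the linear flow of irrational slope on $X=\mathbb{T}^2$ (so $d=1$ and there are no periodic orbits at all), with cross-section $S_1=\mathbb{T}\times\{0\}$, whose return map is an irrational rotation $R_\alpha$, and let $A\subset S_1$ be a Cantor set of positive Lebesgue measure. Then $A$ is closed, zero-dimensional and disjoint from $P=\emptyset$, yet by the Birkhoff ergodic theorem $\frac1n\#\{0\le k<n\colon R_\alpha^k x\in A\}\to\lambda(A)>0$ for a.e.\ $x$, whence $\cocap(A)>0$; equivalently, $\Phi_{[-\eta,\eta]}(A)$ has positive Haar measure and Haar measure is the unique invariant measure, so such a set can never bound a cross-section with small flow boundary. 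The same example shows your pigeonhole sketch cannot be repaired: a point may return to a zero-dimensional set with positive frequency forever without being periodic, so the limit point extracted from a ``periodically repeating clopen assignment'' is not forced into $P$. (Your subsequent appeal to countable subadditivity of $\cocap$ over an $F_\sigma$ decomposition is also invalid: a countable union of singletons, each of zero capacity, can have full capacity.)

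The underlying issue is that controlling $\dim(\Pa U')$ controls only the first member of the relevant family: what must be small is every iterated intersection $\bigcap_{k\in I}T_{C^n}^{-k}(\Pa U')$, whose dimension has to drop by one with each additional return index so that $d+1$ returns force an empty intersection (Remark \ref{rem:empty_int}). This is the content of the $n$-general property, and $\dim(\Pa U')\le d-1$ is exactly the case $n=0$ (Lemma \ref{lem:0-general}); there is no free passage from $n=0$ to all $n$. The paper's proof therefore performs an infinite sequence of successive perturbations (Lemma \ref{lem:n-general2} together with Lemma \ref{lem:E1-E6v2}): at each stage one excises, via Theorem \ref{thm:R2}, a zero-dimensional exceptional set $E_{\AA}^{C}$ so that the new boundary piece $\Pa W$ meets each already-controlled intersection in one dimension less, while the disjointness of the tubes $T_{C^n}^{-j}(B_i')$ (Lemma \ref{lem:D}, which is precisely where aperiodicity off the countable set $P$ enters) ensures $\Pa W$ can contribute to any given intersection at most once. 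Your first paragraph reproduces, in effect, only the base case of that induction; the inductive dimension-drop mechanism is the part you have omitted, and it cannot be replaced by a dimension count plus a marker argument.
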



We follow the strategy of \cite{L95}. Lindenstrauss proved that for any discrete finite dimensional dynamical system $(X,T)$ with (arbitrary) periodic points set $\per(X)$, for every pair of open sets $U,V\subset X$, such that $\partial U\setminus  \per(X)\subset V$,  there is an open set $U'$ with
$U \subset U'\subset  U\cup V$, such that $\partial U'$ is the union of a set of zero (discrete) orbit capacity
and a subset of $\per(X)$. This statement should be compared with Theorem \ref{thm:SFBP}. Our proof is not a routine generalization. 
Using the same method, it is very probable one  could prove an analogous statement to the theorem by Lindenstrauss, imposing no condition on periodic orbits.

Burguet \cite[Proposition 2.1]{burguet2019symbolic} proved that a $C^2$-smooth flow without fixed points on a compact smooth manifold (without boundary), satisfying that for any $\tau>0$ the number of periodic orbits of period less than $\tau$ is finite, has the small flow boundary property. A key tool in Burguet's proof is what he calls the \emph{$n$-transverse property} ($\thickapprox$ the \emph{$n$-general property} defined later in this section). This property is established through successive approximation. A crucial point for the approximation scheme to work is the so-called \emph{$C^1$-stability of transversality}, that is if $M_1$ and $M_2$ are compact transverse smooth manifolds then any compact smooth manifolds $\tilde{M}_1$ and $\tilde{M}_2$ which are sufficiently small $C^1$-perturbations of $M_1$ and $M_2$ are transverse (\cite[Corollary A.3.17]{katok1997introduction}. There are two difficulties in generalizing Burguet’s method to topological flows. Firstly transversality is defined in the context of smooth manifolds. Secondly, even if we consider a topological flow on a compact manifold we are faced with the fact that transversality is not \emph{$C^0$-stable}.

Let us give a very rough overview of our proof. We are given $x\in \In S$, where $S$ is a closed cross-section. We may find $U,V\subset S$ with $x\in\In U=U$, $\In V=V$ and $\Pa U\subset V$. Our goal is to find a perturbation of $U$ in the form of $U'\subset S$ with $U\subset U'\subset U\cup V$ and $\Pa U'\subset V$ such that $\cocap(\Pa U')=0$. Actually we establish the stronger property that for every $y\in S$, 
\begin{equation}
\sharp\{t\geq0: \Phi_t(y)\in \Pa U' \}\le d
\end{equation}
(recall that $\dim(X) = d + 1$).
Consider a small neighborhood of $x\in \Pa U'$ and consider the return profile of this neighborhood: i.e., how elements in this neighborhood return to $\Pa U'$.
This can be represented by certain intersections of certain images of this neighborhood. This enables one do use the dimension of this intersection as a proxy to number of returns. Indeed if the dimension drops below zero, the intersection is empty and no further return is possible. The dimension drop mechanism is the key to the proof and this is captured by the concept of  \textit{general position} explained in the next subsection.  

\begin{rem}
    From the proof of Theorem \ref{main thm} (see the beginning of the proof of Lemma \ref{lem:n-general2}), it follows that one may replace the assumption of the countability of periodic orbits by the assumption that the periodic orbits intersect any cross-section in a zero dimension set, in the statement of the theorem. 

\end{rem}
\subsection{General position}

By Lemma \ref{lem:dim_cross-section}, a global closed cross-section in the $(d+1)$-dimensional space $X$ has dimension $d$. We will therefore use the following definition: 

\begin{df}\label{df:genpos}
	A collection $\mathcal{A}$ of subsets in a $d$-dimensional space is said to be in \textbf{general position} if for every finite  $\mathcal{B}\subset \mathcal{A}$, one has that $$\dim(\cap_{C\in \mathcal{B}}C)\le \max\{-1, d-|\mathcal{B}|\}$$
\end{df}
The definition of general position is due to John Kulesza \cite{kulesza1995zero}. To acquire a better understanding, we quote the following sentences from Lindenstrauss \cite{L95}:

\medskip
{\it 
The motivation for this definition is that given a collection of $(d-1)$-dimensional subsets of an $d$-dimensional space then generically any two will have intersection with dimension less than $d-2$, etc.}

\begin{rem}\label{rem:empty_int}
For the purposes of the proof the most important consequence of Definition \ref{df:genpos} is that every finite sub-collection $\mathcal{B}\subset \mathcal{A}$ with $d+1$ elements has \textit{empty intersection}.
\end{rem}

In what follows in this section, we fix two cross-sections $S=S_i$ and $S'=S_i'$ in the complete families $\mathcal{S}$ and $\mathcal{S}'$ for some $1\le i\le N$.  
Recall the definition of $T_{C^n}^{-k}$ in Equation \eqref{eq:T_C_n} and the definition of $I_{C^n}$
in Definition \ref{def:I_C_n} in Subsection \ref{sec:return_names}.
\begin{df}\label{def:n-general}
	Let  $n\ge 0$.
	A set $V\subset S$ is called \textbf{$C^n$-general} for $C^n\in \CC(n)$ if $\left(T_{C^n}^{-k}(\Pa V)\right)_{k\in I_{C^n}}$ is in general position.
It is called \textbf{n-general} if it is $C^n$-general for every $C^n\in \CC(n)$. Moreover, we say that it is \textbf{$C^n$-general} (or \textbf{n-general}) \textbf{on a set $U$} if $\left(T_{C^n}^{-k}(\Pa W) \cap U\right)_{k\in I_{C^n}}$ is in general position (or respectively for all $C^n\in \CC(n)$).
\end{df}
The following remark is easy to establish: 
\begin{rem}\label{rem:gen_post_inherited}
 If a set $V\subset S$ is $n$-general then it is $m$-general for all $0\leq m\leq n$.
\end{rem}

\begin{rem}\label{lem:0-general}
    Note that if $W\subset S$ with $\dim(\Pa W)\leq d-1$, then for $C^0=(C_0)\in \CC(0)$, $I_{C^0}=\{0\}$ and consequently $\dim(\Pa W)\leq  d-1$ implies that 
    $W$ is $0$-general. 
\end{rem}


\subsection{Key Lemma}
The reader may want to recall the notation introduced in Subsection \ref{subsec:notation_3}. In particular,  $X$ is a finite-dimensional space, $\Phi$ is a topological flow on $X$ without fixed points, having a countable number of periodic orbits and $S$ is a member of a  complete family $\mathcal{S}$ of cross-sections. In this subsection, we prove the key technical lemma of the paper:
\begin{lem}\label{lem:n-general2}
		Let $U$ be a subset in $S$ with $\In U=U$ and $n\ge 0$. Then for every subset $V$ in $S$ with $\In V=V$ and $\Pa U\subset V$,  there is a subset $U'\subset S$ with $\In U'=U'$ such that
	\begin{itemize}
		\item [(1)] $U\subset U'\subset U\cup V$;
		\item [(2)] $\Pa U'\subset V$;
		\item [(3)] $U'$ is $n$-general;
		
\item [(4)] $\Pa U'\cap P=\emptyset$.
	\end{itemize} 
\end{lem}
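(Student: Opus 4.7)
The proof proceeds by induction on $n$.

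\emph{Base case} ($n=0$): Apply Theorem~\ref{thm:R2} to the $d$-dimensional cross-section $S_1$ to obtain a zero-dimensional $F_\sigma$ set $E_0\subset S_1$ with $\dim(S_1\setminus E_0)\le d-1$. Each of the countably many periodic orbits meets $S_1$ only finitely often by the injectivity-time property, so $P\cap S_1$ is countable, hence zero-dimensional $F_\sigma$. Set $\mathcal{E}:=E_0\cup(P\cap S_1)$, which is zero-dimensional $F_\sigma$ by Corollary~\ref{cor:sum_theorem_for_F_sigma}. From $\In U=U$ and $\In V=V$ one has $\overline{U}\subset U\cup V\subset\In S_1$, so Proposition~\ref{prop:avoiding zero_dim} produces an open $U'\subset S_1$ with $\overline{U}\subset U'\subset\overline{U'}\subset U\cup V$ and $\partial_{S_1}U'\cap\mathcal{E}=\emptyset$. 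Proposition~\ref{prop:natural boundary} then gives $\Pa U'=\partial_{S_1}U'$ and $\In U'=U'$; (1), (2) and (4) are routine (using $U\subset U'$ to force $\Pa U'\subset V$), while (3) follows from $\dim\Pa U'\le d-1$ via Lemma~\ref{lem:0-general}.

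\emph{Inductive step} ($n-1\to n$): apply the inductive hypothesis to $(U,V,n-1)$ to obtain $U''$ satisfying (1)--(4) with $(n-1)$-generality. Build $U'$ by one further application of Proposition~\ref{prop:avoiding zero_dim} with an enlarged forbidden set. For each $C^n\in\CC(n)$ with $i_n=i_0$ and each $\mathcal{B}\subset I_{C^n}$ containing $n$, define $A_{C^n,\mathcal{B}}:=\bigcap_{k\in\mathcal{B}\setminus\{n\}}T^{-k}_{C^n}(\Pa U'')$. The $(n-1)$-generality of $U''$ yields $\dim A_{C^n,\mathcal{B}}\le d-|\mathcal{B}\setminus\{n\}|$, and since $T^n_{C^n}$ is a homeomorphism on $Z_{C^n}$ (Lemma~\ref{lem:F_sigma_invariance}) its image in $S_1'$ has the same dimension; Theorem~\ref{thm:R2} then supplies a zero-dimensional $F_\sigma$ subset $E_{C^n,\mathcal{B}}\subset T^n_{C^n}(A_{C^n,\mathcal{B}})$ whose complement has dimension at most $d-|\mathcal{B}|$. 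Set $\mathcal{E}_n:=E_0\cup(P\cap S_1)\cup\bigcup_{(C^n,\mathcal{B})}(E_{C^n,\mathcal{B}}\cap S_1)$, still zero-dimensional $F_\sigma$ by Corollary~\ref{cor:sum_theorem_for_F_sigma}, and apply Proposition~\ref{prop:avoiding zero_dim} to obtain $U'$ open in $S_1$ with $\overline{U''}\subset U'\subset\overline{U'}\subset U''\cup V$ and $\partial_{S_1}U'\cap\mathcal{E}_n=\emptyset$. Properties (1), (2) and (4) follow as in the base case.

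The main obstacle is verifying (3), that $U'$ is $n$-general. By Remark~\ref{rem:gen_post_inherited} it suffices to check, for every $C^n\in\CC(n)$ and $\mathcal{B}\subset I_{C^n}$, the bound $\dim\bigl(\bigcap_{k\in\mathcal{B}}T^{-k}_{C^n}(\Pa U')\bigr)\le d-|\mathcal{B}|$. For $\mathcal{B}\ni n$, the homeomorphism $T^n_{C^n}$ converts this into $\dim\bigl(\Pa U'\cap T^n_{C^n}(A'_{C^n,\mathcal{B}})\bigr)\le d-|\mathcal{B}|$, where $A'_{C^n,\mathcal{B}}:=\bigcap_{k\in\mathcal{B}\setminus\{n\}}T^{-k}_{C^n}(\Pa U')$ is now built from $\Pa U'$ rather than $\Pa U''$; the subtlety is that $E_{C^n,\mathcal{B}}$ was constructed from $\Pa U''$, and $\Pa U'\cap\Pa U''=\emptyset$ by construction (since $\overline{U''}\subset U'$), so $A'_{C^n,\mathcal{B}}$ need not lie inside $A_{C^n,\mathcal{B}}$. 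I would address this self-referential structure by iterating the construction level by level (building $U^{(0)}=U''\subset U^{(1)}\subset\dots\subset U^{(n)}=U'$ and, at each step, verifying $m$-generality while enlarging the forbidden set to incorporate the new conditions arising from $\Pa U^{(m-1)}$) or equivalently by a downward induction on $|\mathcal{B}|$ anchored at the trivial case $|\mathcal{B}|=d+1$, where Remark~\ref{rem:empty_int} forces an empty intersection. This successive-approximation scheme is the technical heart of the argument and mirrors the strategy of \cite{L95} and \cite{burguet2019symbolic}, adapted here to the purely topological setting where $C^1$-stability of transversality is unavailable.
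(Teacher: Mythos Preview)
Your base case is correct and essentially matches the paper's. The inductive step, however, has a genuine gap which you yourself identify but do not close. A single global perturbation $U''\leadsto U'$ via Proposition~\ref{prop:avoiding zero_dim} cannot work: the forbidden set $\mathcal{E}_n$ is built from $\Pa U''$, but after the perturbation every factor in $\bigcap_{k\in\mathcal{B}}T_{C^n}^{-k}(\Pa U')$ involves the \emph{new} boundary $\Pa U'$, and since $\overline{U''}\subset U'$ forces $\Pa U'\cap\Pa U''=\emptyset$, nothing you arranged about $\Pa U''$ survives. Your proposed fixes (iterating $U^{(0)}\subset\cdots\subset U^{(n)}$, or downward induction on $|\mathcal{B}|$) do not escape this: at each stage you would again be asking $\Pa U^{(m)}$ to be in general position with respect to \emph{its own} preimages, which no avoidance of a fixed zero-dimensional set can achieve.

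The paper resolves this self-reference by a localization device you are missing. Inside the step $n-1\to n$, one fixes a single $\mathcal{D}\in\CC(n)$ not yet handled and covers $\Pa A_0$ by finitely many small balls $B_1,\ldots,B_m$ chosen (using $\Pa A_0\cap P=\emptyset$) so that for every $C\in\CC(n)$ and distinct $j,\ell\in I_C$ one has $T_C^{-j}(B_i')\cap T_C^{-\ell}(B_i')=\emptyset$. One then perturbs $A_{k-1}$ to $A_k=A_{k-1}\cup W$ with $W\subset B_k'$. The disjointness guarantees that in the expansion of $\bigcap_{j\in I}T_C^{-j}(\Pa A_k)\subset\bigcap_{j\in I}\bigl(T_C^{-j}(\Pa A_{k-1})\cup T_C^{-j}(\Pa W)\bigr)$, any term with two or more factors $T_C^{-j}(\Pa W)$ is empty; the term with none is controlled by the inductive hypothesis on $A_{k-1}$; and the terms with exactly one such factor are controlled by choosing $\Pa W$ to avoid a zero-dimensional set extracted via Theorem~\ref{thm:R2} from intersections of preimages of $\Pa A_{k-1}$. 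This ``one new factor at a time'' mechanism is what makes the dimension-drop argument go through, and it is precisely the idea absent from your scheme.
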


\begin{proof}[Proof of Lemma \ref{lem:n-general2}]

 If $\Pa V=\emptyset$, then we set $U'=U\cup V$. By Lemma \ref{lem:In U=U} and the fact that $\Pa U\subset \In (V) \subset \In (U\cup V)$, we see that $\Pa U'=\emptyset$. Thus $U'$ is $n$-general for all $n\ge 0$. 

Let us assume $\Pa V\not=\emptyset$. We will prove the lemma by induction on $n$. 
Note that by assumption the number of periodic orbits is countable. As a periodic orbit intersects a cross-section a finite number of times, we conclude $P\cap S$ is countable. By Theorem \ref{thm:R3}, the $F_\sigma$ set $P\cap S$ is at most zero-dimensional.
By Lemma \ref{lem:dim_cross-section}, $\dim(S)\leq d$. By Theorem \ref{thm:R2}, one may find a  $F_\sigma$ subset $E$ of $S$ with $\dim(E)\leq 0$  such that $\dim(S\setminus E)\leq d-1$. By Corollary \ref{cor:sum_theorem_for_F_sigma} $\dim(E\cup (P\cap S))\leq 0$. We now treat the base case $n=0$. By Theorem \ref{thm:R1} we may pick for every $x\in \Pa U$ an open neighborhood $U_x\subset V$ such that  $\Pa U_x\cap (P\cup E)=\emptyset$. Let $U_{x_1}, U_{x_2},\ldots, U_{x_m}$ be a cover of $\Pa U$. Define $U'=U\cup \bigcup_{i=1}^m U_{x_i}$. Clearly $\Pa U'\subset \bigcup_{i=1}^m \Pa U_{x_i}$  and therefore Condition (4) holds. In addition it is easy to see that  Conditions (1) and (2) hold. As $\Pa U'\subset S\setminus E$, $\dim(\Pa U')\leq d-1$.   By Remark \ref{lem:0-general}, Condition (3) holds.
 
By induction we suppose that we have constructed $A_0\subset S$ with $\In A_0=A_0$ which satisfies the following properties:
\begin{itemize}
	\item [(A1)] $U\subset A_0\subset U\cup V$.
	\item [(A2)] $\Pa A_0\subset V$.
	\item [(A3)] $A_0$ is $(n-1)$-general\footnote{Recall Remark \ref{rem:gen_post_inherited}.}.
	\item [(A4)] $\CC(n)=\mathcal{F}_0\sqcup \mathcal{F}_1$ such that $\mathcal{F}_1\not=\emptyset$ and $A_0$ is $C^n$-general for all $C^n\in \mathcal{F}_0$.

	\item [(A5)] $\Pa A_0\cap P=\emptyset$.
\end{itemize}
Now fix $\mathcal{D}\in \mathcal{F}_1$. We will construct $U'$ such that
\begin{itemize}
	\item [(B1)] $\In U'=U'$.
	\item [(B2)] $U\subset U'\subset U\cup V$ and $\Pa U'\subset V$.
	\item [(B3)] $U'$ is $(n-1)$-general.
	\item [(B4)] $U'$ is $C^n$-general for all $C^n\in \mathcal{F}_0\sqcup \{ \mathcal{D}\}$.
	\item [(B5)] $\Pa U'\cap P=\emptyset$.
\end{itemize} 
Assuming that such $U'$ has been constructed, we may complete the proof by inducting on $\mathcal{F}_1$, finally obtaining $\mathcal{F}_0=\CC(n)$. Thus it remains to construct $U'$ that satisfies the conditions $(B1)-(B5)$ which will be achieved in the sequel. Indeed set $U'=A_m$ of Lemma \ref{lem:E1-E6v2}.
\end{proof}

\begin{df}
Denote by $B(x, r)$ with $x\in S'$ and $r>0$  the intersection of $ S'$ and the open ball centered at $x$ with radius $r$. Also, denote by $B'$ the \textbf{doubling ball} of $B$, i.e.\ if $B=B(x, r)$ then $B'=B(x, 2r)$.
\end{df}

\begin{lem}\label{lem:D}
Let $A_0$ and $V$ be as in Lemma \ref{lem:n-general2}. There exist open sets $\{B_i\}_{i=1}^{m}$  satisfying the following conditions:
\begin{itemize}
\item [(D1)] for any $C\in \CC(n)$ and for any distinct $j,\ell\in I_{C}$, one has that $T_{C^n}^{-j} (B_i') \cap T_{C^n}^{-\ell}(B_i')=\emptyset$;
	\item [(D2)] $\{B_i\}_{i=1}^{m}$ is an open cover of $\Pa A_0$;
	\item [(D3)] $\bigcup_{i=1}^m B_i\subset V$.
	
\end{itemize} 
\end{lem}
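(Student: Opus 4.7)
The plan is to produce, for each $x\in\Pa A_0$, a radius $\epsilon_x>0$ such that the doubled ball $B(x,2\epsilon_x)$ is contained in $V$ and satisfies the self-disjointness condition (D1) on its own; then I would extract a finite subcover of $\Pa A_0$ and take the (undoubled) balls as the $B_i$. Securing $B(x,2\epsilon_x)\subset V$ is immediate: since $V\subset S_1\subset S_1'$ with $\In V=V$, Lemma \ref{lem:In U=U} applied with $S=S_1'$ shows that $V$ is relatively open in $S_1'$, so each $x\in V$ admits $r_x>0$ with $B(x,2r_x)\subset V$, and I shall insist $\epsilon_x\leq r_x$.

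The engine of the construction is a uniform positive lower bound on consecutive return times. Because $\mathcal{S}'=\{S_i'\}$ is a finite family of disjoint closed cross-sections, the same compactness argument used for $\mathcal{G}$ in Definition \ref{def:i_th return map} shows that $\bigcup_i S_i'$ is itself a cross-section of some positive injectivity time: otherwise one would find $y_r\in S_i'$ and $t_r\downarrow 0$ with $\Phi_{t_r}(y_r)\in S_j'$, and any convergent subsequence would force $y_\infty\in S_i'\cap S_j'$, contradicting disjointness when $i\neq j$ or the injectivity time of $S_i'$ when $i=j$. Consequently there is $\gamma_0>0$ such that for any $C^n\in\CC(n)$, any $z\in Z_{C^n}$, and any $0\leq j<\ell\leq n$, the flow times $s_k(z):=t_k^{C^n}(z)$ obey $s_\ell(z)-s_j(z)\geq\gamma_0$; indeed $T_{C^n}^{k+1}(z)\in\bigcup_i S_i'$ is simply a later visit to this union cross-section than $T_{C^n}^k(z)$.

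I would then establish the core local claim by contradiction. Suppose for some $x\in\Pa A_0$ no admissible $\epsilon_x$ exists. Then there are $\epsilon_r\downarrow 0$, itineraries $C^n_r\in\CC(n)$, distinct $j_r,\ell_r\in I_{C^n_r}$, and $z_r$ with $T_{C^n_r}^{j_r}(z_r),T_{C^n_r}^{\ell_r}(z_r)\in B(x,2\epsilon_r)$. Finiteness of $\CC(n)$ permits a pigeonhole reduction to fixed $C^n$, $j<\ell$. Compactness of $S_1'$ together with the bound $s_k(z_r)\leq 2n\eta$ from Proposition \ref{prop:C_ij}(2) yields, along a subsequence, $z_r\to z_\infty$, $s_j(z_r)\to\sigma_j$, $s_\ell(z_r)\to\sigma_\ell$. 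Joint continuity of $\Phi$ then forces
\[
\Phi_{\sigma_j}(z_\infty)=\lim_r\Phi_{s_j(z_r)}(z_r)=\lim_r T_{C^n}^{j}(z_r)=x,
\]
and symmetrically $\Phi_{\sigma_\ell}(z_\infty)=x$, while $\sigma_\ell-\sigma_j\geq\gamma_0>0$ by the uniform lower bound. Hence $\Phi_{\sigma_\ell-\sigma_j}(x)=x$ at a positive time, so $x\in P$, contradicting (A5).

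To conclude, $\Pa A_0$ is closed in the compact cross-section $S_1$, hence compact; the open cover $\{B(x,\epsilon_x)\}_{x\in\Pa A_0}$ therefore admits a finite subcover $\{B_i:=B(x_i,\epsilon_{x_i})\}_{i=1}^m$. Then (D1) is the core claim applied to each $x_i$, (D2) is the covering property, and (D3) follows from $B_i\subset B(x_i,2\epsilon_{x_i})\subset V$. The principal obstacle in this plan is pinning down the uniform lower bound $\gamma_0$ on return times, which powers the periodic-point contradiction; once that is in place, the remainder is a routine pigeonhole-compactness-continuity scheme.
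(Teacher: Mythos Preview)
The proposal is correct and follows essentially the same approach as the paper's proof: a contradiction argument at each $x\in\Pa A_0$ using finiteness of $\CC(n)$ and compactness, followed by extraction of a finite subcover. You are in fact slightly more careful than the paper in one respect, explicitly securing the positive lower bound $\gamma_0$ on return times so that the limiting equation $\Phi_{\sigma_\ell-\sigma_j}(x)=x$ genuinely exhibits $x$ as periodic rather than as a fixed point of $\Phi_0$.
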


\begin{proof}
Let $x\in \Pa A_0$. We will now construct a ball around $x$, $B_x=B(x,r)$ for some $r>0$, such that $B_x\subset V$ and for any $C\in \CC(n)$ and for any distinct $j,\ell\in I_{C}$, one has that $T_{C^n}^{-j} (B_x') \cap T_{C^n}^{-\ell}(B_x')=\emptyset$.
Since $\mathcal{C}(n)$ is a finite collection, it is sufficient to consider a fixed $C^n=(C_0, C_1, \dots, C_{n-1}, C_n)\in \mathcal{C}(n)$ and fixed $(j,\ell)\in I_{C^n}\times I_{C^n}$ with $j>\ell$ (as one may take the intersection of the open balls associated to each $C^n$ and each $j>\ell$). Assume for a contradiction that for every $p\in \mathbb{N}$  we may find an open neighborhood $W_p$ in $S'$ of $x$ with $\diam(W_p)<\frac{1}{p}$ and $w_p\in X$ such that  $w_p\in T_{C^n}^{-j}W_p\cap T_{C^n}^{-\ell}W_p$. Let $z_p:=T_{C^n}^{j}w_p,\, y_p:=T_{C^n}^{\ell}w_p\in W_p$. This implies we have found sequences $(y_p)_{p\in \N}$ and $(z_p)_{p\in \N}$ in $X$ converging to $x$ such that $T_{D^{j-\ell}}^{j-\ell}y_p=z_p$, where $D^{j-\ell}=(C_{\ell}, \dots, C_{j})\in \mathcal{C}(j-\ell)$. Thus by Proposition \ref{prop:C_ij} there exist $t_p\leq 2\eta (j-\ell)$ such that $\Phi_{t_p}y_p=z_p$.  W.l.o.g.\ $t_p\rightarrow t$, for $k\to \infty$ for some $t\leq 2\eta (j-\ell)$. We conclude that $\Phi_{t}x=x$ which is a contradiction to the fact that $x\notin P$ as (A5) $\Pa A_0 \cap P=\emptyset$. Let now $\{B_i\}_{i=1}^{m}$ be a finite subcover of $\Pa A_0$ extracted from $\{B_x\}$. Clearly properties (D1)-(D3) hold.
\end{proof}

\begin{lem}\label{lem:E1-E6v2}
Let $(B_k)$ and $(B_k')$ be as in Lemma \ref{lem:D}. There exists sets $\{A_k\}_{k=1}^{m}$ such that for $k\geq 1$:
\begin{itemize}
	\item [(E1)] $A_{k-1}\subset A_k$ and $A_k\setminus A_{k-1}\subset B_k'$;
	\item  [(E2)] $\In A_k=A_k$;
	\item [(E3)] $\Pa A_k \subset  V$;
	\item [(E4)]  $\Pa A_k\cap P=\emptyset$;
	\item [(E5)] $\Pa A_k\subset \bigcup_{i=1}^m B_i$;
	\item [(E6)] $A_k$ is
	$(n-1)$-general and $C^n$-general for all $C^n\in \mathcal{F}_0$;
	\item [(E7)] $A_k$ is
	$\mathcal{D}$-general on $\cup_{1\le j\le k} B_j$.
	
\end{itemize}
\end{lem}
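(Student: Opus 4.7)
The plan is to construct $\{A_k\}_{k=1}^{m}$ inductively, starting from the already-given $A_0$ (for which the $k=0$ analogues of (E1)--(E7) hold, with (E7) vacuous on the empty union). For the inductive step, I set $A_k := A_{k-1}\cup O_k$ where $O_k$ is a finite union of small open sets in $B_k'$ whose closures lie in $B_k'\cap \Int S_1$ and which cover $\Pa A_{k-1}\cap \overline{B_k}$. The boundaries of these open sets will be chosen to avoid a carefully constructed zero-dimensional $F_\sigma$ set $E_k\subset S_1$, using Proposition~\ref{prop:avoiding zero_dim}. Properties (E1), (E2) (via Lemma~\ref{lem:In U=U}), (E3) (by (D3) and the inductive hypothesis), (E4) (once $E_k\supset P\cap S_1$), and (E5) (by the covering property of the $\{B_i\}$ together with the unchanged part of $\Pa A_{k-1}$ outside $B_k'$) will then follow routinely, so the substance of the argument is the definition of $E_k$ and the verification of (E6) and (E7).

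The construction of $E_k$ proceeds by collecting a finite number of zero-dim $F_\sigma$ pieces. First I include $P\cap S_1$ (zero-dimensional since the periodic set is countable) and a zero-dim $F_\sigma$ set $E\subset S_1$ with $\dim(S_1\setminus E)\leq d-1$ given by Theorem~\ref{thm:R2}; this handles the $|\mathcal{B}|=1$ case of every generality requirement. Next, for each $C^n\in \mathcal{F}_0\cup\{\mathcal{D}\}$, each $\mathcal{B}\subset I_{C^n}$ with $0\in \mathcal{B}$ and $|\mathcal{B}|\geq 2$, and each $1\leq \ell\leq k$ (the case $\ell=k$ delivers new $\mathcal{D}$-generality on $B_k$, while $\ell<k$ preserves the earlier generalities), I consider the auxiliary set $W_{C^n,\mathcal{B},\ell}:=\bigcap_{j\in\mathcal{B}\setminus\{0\}}T_{C^n}^{-j}(\Pa A_{k-1})\cap B_\ell$. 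A shift argument shows $\dim W_{C^n,\mathcal{B},\ell}\leq d-|\mathcal{B}|+1$: writing $k_1$ for the smallest element of $\mathcal{B}\setminus\{0\}$, the truncated cross-section-name $\mathcal{D}^{(k_1)}:=(C_{k_1},\ldots,C_n)$ lies in $\CC(n-k_1)\subset\CC(n-1)$, and the identity $T_{C^n}^{k_1+i}=T_{\mathcal{D}^{(k_1)}}^{i}\circ T_{C^n}^{k_1}$ on the relevant domain lets us invoke the $(n-1)$-generality (A3) of $A_{k-1}$. Theorem~\ref{thm:R2} then yields a zero-dim $F_\sigma$ subset of $W_{C^n,\mathcal{B},\ell}$ whose removal drops the dimension to $d-|\mathcal{B}|$, and this subset is added to $E_k$. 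Corollary~\ref{cor:sum_theorem_for_F_sigma} ensures that after all these finitely many additions $E_k$ remains zero-dimensional $F_\sigma$.

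The main obstacle, as this sketch suggests, is to prove that modifying $A_{k-1}$ only inside $B_k'$ cannot spoil the $C^n$-generalities already established on the previous balls $B_\ell$ with $\ell<k$. This is where the disjointness (D1) is decisive: for every $C^n\in \mathcal{F}_0\cup\{\mathcal{D}\}$, the preimages $T_{C^n}^{-j}(B_k')$ with $j\in I_{C^n}$ are pairwise disjoint, so on any slice $T_{C^n}^{-i_0}(B_k')$ the intersection $\bigcap_{j\in\mathcal{B}}T_{C^n}^{-j}(\Pa A_k)$ has only its single factor with index $i_0$ altered, all other factors reading $\Pa A_{k-1}$. Transporting by the local homeomorphism $T_{C^n}^{i_0}$ (Lemma~\ref{lem:F_sigma_invariance}) reduces the verification to controlling $\dim\bigl(\Pa A_k\cap B_k'\cap Y_{C^n,\mathcal{B},i_0,\ell}\bigr)$ for an auxiliary set $Y\subset B_k'$ of dimension at most $d-|\mathcal{B}|+1$ (by the same shift-and-(A3) argument), whose zero-dim $F_\sigma$ thickening is absorbed into $E_k$. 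Since everything is finite (finitely many $C^n\in \mathcal{F}_0\cup\{\mathcal{D}\}$, finitely many $\mathcal{B}$, $i_0\in\mathcal{B}$, and $\ell\leq k$), the construction of $E_k$ closes, and applying Proposition~\ref{prop:avoiding zero_dim} produces $O_k$ with $\Pa O_k\cap E_k=\emptyset$, completing the inductive step.
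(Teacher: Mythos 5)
Your overall strategy is the paper's: enlarge $A_{k-1}$ inside $B_k'$ by an open set whose flow boundary avoids a finite union of zero\nobreakdash-dimensional $F_\sigma$ ``dimension\nobreakdash-dropping'' sets supplied by Theorem \ref{thm:R2}, kill the terms of the boundary decomposition with two or more perturbed factors via the disjointness (D1), and drop the dimension of the terms with exactly one perturbed factor by the avoidance property. Two of your choices are acceptable variants of the paper's: you place the zero-dimensional sets in the target balls rather than defining them in $Z_{C^n}$ and pushing forward by $T_{C}^{j}$, and you bound $\dim W_{C^n,\mathcal{B},\ell}\le d-|\mathcal{B}|+1$ by shifting to the truncated name $\mathcal{D}^{(k_1)}\in\CC(n-k_1)$ and quoting the $(n-1)$-generality of $A_{k-1}$ (via Remark \ref{rem:gen_post_inherited}), where the paper quotes (E6)/(E7) for $k-1$ for the full-length name directly.

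There is, however, a genuine gap: your $E_k$ only collects dimension-dropping sets for the names $C^n\in\mathcal{F}_0\cup\{\mathcal{D}\}$, whereas (E6) also asserts that $A_k$ is $(n-1)$-general, i.e. $C$-general for \emph{every} $C\in\CC(n-1)$. The perturbation can raise the dimension of $\bigl(\bigcap_{j\in I\setminus\{\ell\}}T_{C}^{-j}\Pa A_{k-1}\bigr)\cap T_{C}^{-\ell}\Pa O_k$ for a name $C\in\CC(n-1)$ unless $\Pa O_k$ is also forced to avoid the zero-dimensional sets attached to these shorter names; and a length-$(n-1)$ name is not in general a truncation of a length-$n$ name, nor is $Z_{C^{n-1}}$ covered by $\bigcup_{C_n}Z_{C^n}$, so this case is not subsumed by your treatment of $\mathcal{F}_0\cup\{\mathcal{D}\}$. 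The omission propagates: your restriction to index sets $\mathcal{B}\ni 0$ is legitimate only because index sets avoiding $0$ reduce, via the shift, to index sets containing $0$ for shorter names, i.e. to the $m$-generality of $A_k$ for $m\le n-1$ --- exactly what is missing --- so even the $C^n$-generality for $C^n\in\mathcal{F}_0$ is left unproven for $I\not\ni 0$. The fix is the paper's: run the construction of $E_k$ over $\CC(n-1)\cup\mathcal{F}_0\cup\{\mathcal{D}\}$. A second, smaller issue: requiring only $\overline{O_k}\subset B_k'\cap\In S_1$ does not yield (E5), since $B_k'$ need not lie in $\bigcup_{i=1}^m B_i$; you must also demand $\overline{O_k}\subset\bigcup_{i=1}^m B_i$, which is possible because $\Pa A_{k-1}\cap\overline{B_k}$ is a compact subset of the open set $B_k'\cap\bigcup_{i=1}^m B_i$.
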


\begin{proof}
We will construct sets $\{A_k\}_{k=1}^{m}$ by induction on $k$. As $A_0$ is used as the base case we notice from properties (A1)-(A5) that $A_0$ satisfies (E2)-(E6) and (E7) (in an empty fashion). 

Suppose that  $(A_i)_{i=0}^{k-1}$ have been already defined, satisfying the listed proprieties. Define:
$$
\AA_{C}=\{T_{C}^{-j}(\Pa A_{k-1}): j\in I_{C}\},
$$
for every $C\in \CC(n-1)\cup \mathcal{F}_0\cup \{\mathcal{D}\}$. By Theorem \ref{thm:R2}, for every $\AA\subset \AA_{C}$ with $\bigcap \AA \neq \emptyset$, there exists an $F_\sigma$ subset $E_\AA^C \subset Z_{C}$  of dimension zero such that 
\begin{equation}\label{eq:dim_drop}
\dim(\bigcap \AA \setminus E_\AA^C)=\dim(\bigcap \AA )-1.
\end{equation}
Define
$$
E=\bigcup_{C\in \CC(n-1)\cup \mathcal{F}_0\cup \{\mathcal{D}\}, \AA\subset \AA_{C}, j\in I_{C}} T_{C}^j E_\AA^C,
$$ Clearly, $E$ is a zero-dimensional set in $S'$ by Corollary \ref{cor:sum_theorem_for_F_sigma} and Lemma \ref{lem:F_sigma_invariance}(2). Note that by the inductive assumption $\Pa A_{k-1}\subset \bigcup_{i=1}^m B_i$.  Thus one may find $\epsilon>0$ so that:
$$\overline{\Theta}_{\epsilon}(\overline{B_k}\cap \Pa A_{k-1})\subset \ \bigcup_{i=1}^m B_i,$$
where recall from Subsection \ref{subsec:notation} that the \textit{closed $\epsilon$-tube} around a closed set $Q\subset S'$ is the set $\overline{\Theta}_{\epsilon}(Q):=\{y\in S'|\, \dist(y,Q)\leq \epsilon\}$. 
Clearly $\overline{\Theta}_{\epsilon}(\overline{B_k})\subset B_k'$ for $\epsilon>0$ small enough.
Thus $\overline{\Theta}_{\epsilon}(\overline{B_k}\cap \Pa A_{k-1})\subset B_k'\cap \bigcup_{i=1}^m B_i$.
Note $P\cap S$ is a $F_\sigma$ set with $\dim(P\cap S)\leq 0$.
 By Corollary \ref{cor:sum_theorem_for_F_sigma},  $\dim(E\cup P\cap S)\leq 0$.
By Proposition \ref{prop:avoiding zero_dim}, there exists a set $W$ with $\In W=W$ such that
\begin{itemize}
	\item [(F1)]  $\overline{W}\subset B_k'\cap \bigcup_{i=1}^m B_i$;
	\item [(F2)]  $\overline{\Theta}_{\epsilon}(\overline{B_k}\cap \Pa A_{k-1}) \subset W$;
	\item [(F3)] $\Pa W \cap (E\cup P)=\emptyset$.
	
\end{itemize}
Let $A_k=A_{k-1}\cup W$. We will show that $A_k$ is the set we look for.  By (F1), Conditions (E1) and (E5) hold. Since $\In W=W$, (E2) follows by Lemma \ref{lem:In U=U}. Note: 
\begin{equation}\label{eq:A_k}
\Pa A_k\subset \Pa A_{k-1} \cup \Pa W \subset \Pa A_{k-1} \cup \overline{W}\subset V.
\end{equation}
This gives (E3). By (F3) and (E4) for $k-1$ (E4) holds for $k$. Using \eqref{eq:A_k} and (F3), we have (E7). It remains to check $(E6)$ and $(E7)$. We start by proving $(E6)$. Fix $C\in \CC(n-1)\cup \mathcal{F}_0$. Assume for a contradiction that 
$A_k$ is not $C$-general. We can thus find  $I\subset I_C$ with
\begin{equation*}
\dim( \cap_{j\in I} T_C^{-j} \Pa A_k )>\max \{-1, d-\sharp I\}.
\end{equation*}
By Equation \eqref{eq:A_k} it follows that 
\begin{equation*}
\begin{split}
\cap_{j\in I} T_C^{-j} \Pa A_k 
&\subset \cap_{j\in I} \left( T_C^{-j} \Pa A_{k-1} \cup   T_C^{-j} \Pa W  \right)\\
&\subset \cup_{\alpha\in \{ 0,1\}^{I}}\left( \cap_{j\in I} K_j^{\alpha_j} \right),
\end{split}
\end{equation*}
where $K_j^0=T_C^{-j} \Pa A_{k-1}$ and $K_j^1=T_C^{-j} \Pa W$ for $j\in I$. By $(D1)$ and the fact that $W\subset B_k'$, we see  that $ \cap_{j\in I} K_j^{\alpha_j}=\emptyset$ if $\alpha_j=1$ for at least two $j\in I$. By the induction hypothesis, we have that $\dim(\cap_{j\in I} K_j^0 )\le \max\{-1, d-\sharp I\}$. By Lemmas  \ref{lem:clo_int_open_is_F_sigma} and \ref{lem:F_sigma_invariance} the sets $K_j^{\alpha_j}$ are $F_\sigma$. As the intersection of finitely many $F_\sigma$ sets is an $F_\sigma$ set, we may apply the sum theorem for $F_\sigma$ sets (Corollary \ref{cor:sum_theorem_for_F_sigma}). It follows that there is $\ell\in I$ such that
\begin{equation}\label{eq:dim 1}
\dim(\big (\cap_{j\in I\setminus\{\ell\}} K_j^0 \big )\cap K_\ell^1 )> \max\{-1, d-\sharp I\}.
\end{equation}
Let $\AA=\{K_j^0\}_{j\in I\setminus\{\ell\}}$. Assume there is $x\in K_\ell^1 \cap E_\AA^C$, then $T_C^\ell x\in \Pa W\cap T_C^\ell E_\AA^C$ which is a contradiction to $(F3)$. 
We conclude
$$
\big (\cap_{j\in I\setminus\{\ell\}} K_j^0 \big ) \cap K_\ell^1 \subset \big (\cap_{j\in I\setminus\{\ell\}} K_j^0 \big ) \setminus E_\AA^C.
$$
By the inductive hypothesis (E6) holds for $k-1$ and may be applied on $\cap_{j\in I\setminus\{\ell\}} K_j^0$. Thus, using Equation \eqref{eq:dim_drop}:
\begin{equation*}
\begin{split}
\dim(\big (\cap_{j\in I\setminus\{\ell\}} K_j^0\big ) \cap K_\ell^1 )
&\le  \dim(\big (\cap_{j\in I\setminus\{\ell\}} K_j^0\big )\setminus E_\AA^C)\\
&= \dim(\cap_{j\in I\setminus\{\ell\}} K_j^0)-1\\
&\le  \max\{-1, d-\sharp I\}.
\end{split}
\end{equation*}
This is a contradiction to Equation \eqref{eq:dim 1}. We now prove (E7).  We notice from (F2) and the fact that $A_k=A_{k-1}\cup W$ that $\Pa A_k\cap (B_k\cap \Pa A_{k-1})=\emptyset$, equivalently $\Pa A_k\setminus (B_k\cap \Pa A_{k-1})=\Pa A_k$. In addition we conclude from (F1) that 
$\Pa W \setminus B_k'=\emptyset$.
Thus  $\Pa A_k=\Pa A_k\setminus (B_k\cap \Pa A_{k-1})\subset (\Pa A_{k-1} \cup \Pa W)\setminus (B_k\cap \Pa A_{k-1})\subset (\Pa A_{k-1}\setminus  B_k)\cup (\Pa W\cap B_k')$. Finally:
$$\bigcup_{i=1}^k B_i\cap \Pa A_k\subset(B_{k}\cup\bigcup_{i=1}^{k-1} B_i)\cap \big((\Pa A_{k-1}\setminus  B_k)\cup (\Pa W\cap B_k')\big) $$
The (RHS) equals the union of the four following expressions, one of which is empty:
\begin{itemize}
    \item 
    $B_{k}\cap (\Pa A_{k-1}\setminus  B_k)=\emptyset$,
\item 
$\bigcup_{i=1}^{k-1} B_i\cap (\Pa A_{k-1}\setminus  B_k)\subset \bigcup_{i=1}^{k-1} B_i\cap \Pa A_{k-1}$,
\item 
 $B_{k}\cap \Pa W$ 
\item 
$(\bigcup_{i=1}^{k-1}B_i)\cap (\Pa W\cap B_k') $
\end{itemize}
Fix  $I\subset I_\mathcal{D}$. One can write $\bigcap_{j\in I} T_\mathcal{D}^{-j}(\bigcup_{i=1}^k B_i\cap \Pa A_k)$ as the union of $3^{|I|}$ expressions. Notice that as above each one of these expressions is a finite intersection of $F_\sigma$ sets, thus $F_\sigma$ by itself. Therefore by Corollary \ref{cor:sum_theorem_for_F_sigma},  it is enough to show that each of these expressions has dimension less or equal than $d-|I|$. Let us analyze each of these expressions. If the expression $B_{k}\cap \Pa W$  appears twice, then the expression is empty as  $T_\mathcal{D}^{-j_1}(B_{k})\cap T_\mathcal{D}^{-j_2}(B_{k})=\emptyset$ for $j_1\neq j_2$. The same is true for the expression $(\bigcup_{i=1}^{k-1}B_i)\cap (\Pa W\cap B_k')$ as $T_\mathcal{D}^{-j_1}(B_{k}')\cap T_\mathcal{D}^{-j_2}(B_{k}')=\emptyset$\ for $j_1\neq j_2$.
Thus the expressions involving $\Pa W$ appear at most once. If such expression does not appear at all, then we are only left with the expression $\bigcup_{i=1}^{k-1} B_i\cap \Pa A_{k-1}$ and the dimension estimate is handled by the inductive assumption that condition (E7) holds for $k-1$. Finally we treat the case where an expression involving $\Pa W$ appears exactly once. We thus analyze an expression contained in an expression of the form 
$$T_\mathcal{D}^{-\ell}\Pa W\cap \bigcap_{j\in {I\setminus \{\ell\}}} T_\mathcal{D}^{-j}(\bigcup_{i=1}^{k-1} B_i\cap \Pa A_{k-1})$$
Let $\AA=\{T_\mathcal{D}^{-j}(\Pa A_{k-1}): j\in {I\setminus \{\ell\}}\}$, Assume there is $x\in T_\mathcal{D}^{-\ell}\Pa W \cap E_{\AA}^\mathcal{D}$, then $T_\mathcal{D}^\ell x\in \Pa W\cap T_\mathcal{D}^\ell E_{\AA}^\mathcal{D}\subset \Pa W\cap E$ which is a contradiction to $(F3)$. Thus, using condition (E7) for $k-1$ and Equation \eqref{eq:dim_drop},
\begin{equation*}
\begin{split}
&\dim(T_\mathcal{D}^{-\ell}\Pa W\cap \bigcap_{j\in {I\setminus \{\ell\}}} T_\mathcal{D}^{-j}(\bigcup_{i=1}^{k-1} B_i\cap \Pa A_{k-1}))\\
&\le  \dim(\big(\bigcap_{j\in {I\setminus \{\ell\}}} T_\mathcal{D}^{-j}(\bigcup_{i=1}^{k-1} B_i\cap \Pa A_{k-1})\big )\setminus E_{\AA}^\mathcal{D})\\
&= \dim(\bigcap_{j\in {I\setminus \{\ell\}}} T_\mathcal{D}^{-j}(\bigcup_{i=1}^{k-1} B_i\cap \Pa A_{k-1}))-1\\
&\le  \max\{-1, d-|I|\},
\end{split}
\end{equation*}
as desired. 

\end{proof}

\subsection{Proof of the Main Theorem}

The proof of the main theorem necessitates an approximation lemma. Recall the number $\delta_0$ was defined before Definition \ref{def:delta_0}.
 \begin{lem}\label{lem:open neigh intersection empty}
	Let $A$ and $V$ be subsets such that $A=\overline{A}\subset V=\In V\subset\In S$. Let $n,d\in \N$. Suppose that for all $C^n=(C_0,C_1,\ldots, C_n)\in \CC(n)$ and $I\subset I_C$ with $0\in I$ and $\sharp I=d+1$, it holds, 
	\begin{equation}\label{eq:open neigh intersection empty 0}
\bigcap_{k\in I} T_{C^n}^{-k}A=\emptyset,
	\end{equation}
	Then there exists $U\subset \overline{U}\subset V\subset S_i$ with $\In U=U$ and $A\subset U$ such that 
	\begin{equation}\label{eq:open neigh intersection empty 1}
 Z_{C^n}^{\delta_0}\cap \bigcap_{k\in I} T_{C^n}^{-k}U=\emptyset,
	\end{equation}
	for all $C^n=(C_0,C_1,\ldots, C_n)\in \CC(n)$ and $I\subset I_C$ with $0\in I$ and $\sharp I=d+1$.
\end{lem}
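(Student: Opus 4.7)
The plan is a compactness-and-continuity argument: thicken the closed set $A$ to a uniform open neighborhood $U$, chosen small enough that for every tuple $(C^n, I)$ in the finite collection prescribed by the hypothesis and every $x \in Z_{C^n}^{\delta_0}$, at least one iterate $T_{C^n}^k(x)$ remains at a definite distance from $A$ (hence outside $U$).

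First, I would observe that the collection of pairs $(C^n, I)$ to be controlled is \emph{finite}: $\CC(n)$ is a finite union of finite products of finite collections, and each $I_{C^n} \subset \{0, 1, \ldots, n\}$ has only finitely many subsets of cardinality $d+1$ containing $0$. Second, each $Z_{C^n}^{\delta_0}$ is closed in $S_1'$ (immediate from Definition \ref{def:delta_0}), hence compact, and $Z_{C^n}^{\delta_0} \subset Z_{C^n}$ since $\overline{\Theta}^{S_1'}_{-\delta_0}(C_k) \subset C_k$ for each $k$. By Lemma \ref{lem:F_sigma_invariance}(1), $T_{C^n}^k$ is a homeomorphism on $Z_{C^n}$ and in particular continuous on $Z_{C^n}^{\delta_0}$.

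For each such pair $(C^n, I)$, I would then consider the continuous function
$$ g_{C^n, I}(x) \;:=\; \max_{k \in I} \dist\bigl(T_{C^n}^k(x),\, A\bigr), \qquad x \in Z_{C^n}^{\delta_0}. $$
The hypothesis \eqref{eq:open neigh intersection empty 0}, combined with the closedness of $A$, forces $g_{C^n, I} > 0$ throughout $Z_{C^n}^{\delta_0}$, so by compactness $\epsilon_{C^n, I} := \min g_{C^n, I} > 0$; set $\epsilon_0 := \min_{(C^n, I)} \epsilon_{C^n, I} > 0$. Separately, $V = \In V$ together with $V \subset \In S_1 \subset \In S_1'$ (so $V \cap \Pa S_1' = \emptyset$) imply, via Lemma \ref{lem:In U=U}, that $V$ is relatively open in $S_1'$; since $A$ is compact, $\dist(A, S_1' \setminus V) > 0$, and one can fix $\epsilon \in (0, \epsilon_0)$ with $\{y \in S_1' : \dist(y, A) \leq \epsilon\} \subset V$.

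Finally, define $U := \{y \in S_1' : \dist(y, A) < \epsilon\}$. Then $A \subset U$, $U$ is relatively open in $S_1'$, and $\overline{U} \subset V \subset \In S_1'$, so $\overline{U} \cap \Pa S_1' = \emptyset$; hence by Lemma \ref{lem:In U=U}, $\In U = U$. For each $(C^n, I)$ in the collection and each $x \in Z_{C^n}^{\delta_0}$, the definition of $\epsilon_0$ gives some $k \in I$ with $\dist(T_{C^n}^k(x), A) \geq \epsilon_0 > \epsilon$; since $k \in I_{C^n}$ forces $T_{C^n}^k(x) \in S_{i_0}' = S_1'$ (where $U$ lives), this yields $T_{C^n}^k(x) \notin U$ and hence $x \notin \bigcap_{k \in I} T_{C^n}^{-k} U$, which is \eqref{eq:open neigh intersection empty 1}. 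No serious obstacle is anticipated; the only bookkeeping point is to track that $T_{C^n}^k(x)$ really lies in the cross-section $S_1'$ where $U$ is defined, which is ensured by $k \in I_{C^n}$.
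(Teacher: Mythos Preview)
Your proof is correct and follows essentially the same compactness argument as the paper. The paper phrases it by contradiction---assuming $U_m=\Theta_{1/m}(A)$ fails for every $m$, extracting a convergent subsequence in the compact $Z_{C^n}^{\delta_0}$ (after fixing $C^n$ and $I$ by finiteness), and landing in $\bigcap_{k\in I}T_{C^n}^{-k}A$---whereas you go directly via the continuous function $g_{C^n,I}$ attaining a positive minimum; these are equivalent. One cosmetic point: your sentence ``$S_{i_0}'=S_1'$'' tacitly assumes $i_0=1$, but when $i_0\neq 1$ the condition \eqref{eq:open neigh intersection empty 1} is vacuous (since $0\in I$ and $T_{C^n}^{-0}U\subset Z_{C^n}\cap U=\emptyset$), so nothing is lost.
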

\begin{proof}
	We claim there is $m_0\in \N$, so that for all $m\geq m_0$, $U=U_m:=\Theta_{1/m}(A)$ fulfills the sought after requirements. Assume for a contradiction that this is not the case. We may thus find $ x_m \in Z_{C_m^n}^{\delta_0}$, $C_m^n=(C_{0,m},C_{1,m},\ldots, C_{n,m})\in \CC(n)$, $0=j_0^m<j_1^m<\cdots <j_d^m$, $\{j_i^m\}_{i=0}^d\subset I_{C_m^n}$ so that $T_{C_m^n}^{j_i^m} x_m \in \Theta_{1/m}(A)$ for $i=0,\ldots, d$. As $\CC(n)$ is finite, by passing to a subsequence we may assume w.l.o.g.\ $C_m^n=C^n$ for some fixed $C^n=(C_0,C_1,\ldots, C_n)\in \CC(n)$, and $j_i^m=j_i$ for $i=0,\ldots, d$, for some fixed $I:=\{j_i\}_{i=0}^d\subset I_{C^n}$. By passing to a subsequence we may assume w.l.o.g.\ that $x_m\rightarrow x\in Z_{C^n}^{\delta_0}$. In particular $T_{C^n}^{j_i} x$ is well defined for $i=0,\ldots, d$. Conclude $T_{C^n}^{j_i} x \in A$ for $i=0,\ldots, d$. Thus $x\in \bigcap_{k\in I} T_{C^n}^{-k}A$ which is a contradiction. Q.E.D.
\end{proof}

\begin{thm}\label{thm:SFBP}
	Let $X$ be a compact finite-dimensional space.
    Let $\Phi$ be a topological flow on $X$ without fixed points, having a countable number of periodic orbits. Let $\SS=\{S_i \}_{i=1}^{N}$ be a complete families of cross-sections of $(X,\Phi)$.
    Then for any $U, V\subset S$ with $\In U=U$, $\In V=V$ and $\Pa U\subset V$, there exists $U'\subset S$ with $\In U'=U'$ and $U\subset U'\subset U\cup V$ such that $U'$ has a small flow boundary.
\end{thm}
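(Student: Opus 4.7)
The goal is to construct $U'$ so that $\cocap(\Pa U')=0$, whence by Lemma \ref{lem:small flow boundary} $U'$ will have small flow boundary. Following the strategy of \cite{L95}, I will in fact ensure that every $\Phi$-orbit meets $\Pa U'$ in at most $d:=\dim(S_1)$ times, which gives the capacity bound immediately since $\sup_x\sharp\{0\leq t<T:\Phi_t(x)\in\Pa U'\}\leq d$ for all $T$.

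The construction is a nested inductive limit. Set $(U_0,V_0):=(U,V)$. Having defined $(U_{n-1},V_{n-1})$ with $\In V_{n-1}=V_{n-1}$ (which by the argument surrounding Lemma \ref{lem:In U=U} entails $V_{n-1}\subset\In S_1$), apply Lemma \ref{lem:n-general2} to obtain $U_n$ with $\In U_n=U_n$, $U_{n-1}\subset U_n\subset U_{n-1}\cup V_{n-1}$, $\Pa U_n\subset V_{n-1}$, $U_n$ being $n$-general and $\Pa U_n\cap P=\emptyset$. By Remark \ref{rem:gen_post_inherited} $U_n$ is $m$-general for every $0\leq m\leq n$, and Remark \ref{rem:empty_int} forces
\[
\bigcap_{k\in I}T_{C^m}^{-k}\Pa U_n=\emptyset
\]
for every such $m$, every $C^m\in\CC(m)$ and every $I\subset I_{C^m}$ with $0\in I$ and $|I|=d+1$. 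Applying Lemma \ref{lem:open neigh intersection empty} once for each of the finitely many such triples $(m,C^m,I)$ and intersecting the resulting open sets yields an open $V_n$ with $\Pa U_n\subset V_n\subset\overline{V_n}\subset V_{n-1}$, $\In V_n=V_n$, and
\[
Z_{C^m}^{\delta_0}\cap\bigcap_{k\in I}T_{C^m}^{-k}\overline{V_n}=\emptyset
\]
for every such $(m,C^m,I)$. The closed-tube version is obtained from the open conclusion by taking a slightly tighter tube around $\Pa U_n$.

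Set $U':=\bigcup_n U_n$. Monotonicity combined with $U_{n+1}\subset U_n\cup V_n$ and the nesting $V_{n+1}\subset V_n$ gives $U'\subset U_n\cup V_n$ for every $n$, so $\overline{U'}\subset\overline{U_n}\cup\overline{V_n}$. A short case analysis (if $x\in\Pa U'\cap\overline{U_n}$ then $x\in\overline{U_n}\setminus U_n=\Pa U_n\subset V_n$) yields $\Pa U'\subset\overline{V_n}$ for every $n$. Since each $U_n$ is relatively open in $S_1$ and disjoint from $\Pa S_1$, so is $U'$, hence $\In U'=U'$ by Lemma \ref{lem:In U=U}; also $U\subset U'\subset U\cup V$ is clear. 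Now suppose for contradiction that some orbit meets $\Pa U'\subset S_1$ at $d+1$ times; time-shifting we may assume these are $0=s_0<s_1<\cdots<s_d$ with $y:=\Phi_{s_0}(x)\in\Pa U'\subset S_1$. Lemma \ref{lem:internal_return} then supplies $n\in\N$, $C^n\in\CC(n)$ and $0=j_0<j_1<\cdots<j_d$ in $I_{C^n}$ with $y\in Z_{C^n}^{\delta_0}$ and $T_{C^n}^{j_i}(y)=\Phi_{s_i}(y)\in\Pa U'\subset\overline{V_n}$. Taking $I=\{j_0,\ldots,j_d\}$ contradicts the construction, so every orbit meets $\Pa U'$ at most $d$ times, giving $\cocap(\Pa U')=0$ as required.

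The principal obstacle is arranging the approximation so that the empty-intersection (``tube'') property at every level $m\leq n$ is preserved under all later refinements; this is why Lemma \ref{lem:open neigh intersection empty} must be applied cumulatively at each step and the neighborhoods $V_n$ chosen to shrink fast enough. A related subtlety is the use of the closed tube $\overline{V_n}$ rather than $V_n$ itself in the final intersection, which is essential because $\Pa U'$ only sits in the closure $\overline{V_n}$, not in $V_n$; this is achieved by tightening $V_n$ inside the open neighborhood delivered by Lemma \ref{lem:open neigh intersection empty}. Everything else (the identity $\In U'=U'$, $U'\subset U\cup V$, the translation reducing an arbitrary orbit to a point on $\Pa U'\subset S_1$) is essentially bookkeeping once the inductive construction is in place.
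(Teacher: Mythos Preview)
Your proof is correct and follows essentially the same inductive scheme as the paper: alternate Lemma~\ref{lem:n-general2} with Lemma~\ref{lem:open neigh intersection empty} to build $(U_n,V_n)$, set $U'=\bigcup_n U_n$, show $\Pa U'$ is trapped in every $V_n$, and derive a contradiction via Lemma~\ref{lem:internal_return}. The only cosmetic differences are that the paper uses the nesting $\overline{V_{k+1}}\subset V_k$ to obtain $\Pa U'\subset V_k$ directly (so no separate tightening to $\overline{V_n}$ is needed), and it applies Lemma~\ref{lem:open neigh intersection empty} only at the single level $k+1$ rather than cumulatively for all $m\le n$ (harmless but unnecessary, since the final contradiction only invokes the level-$n$ condition on $V_n$); also, $d$ should be taken as $\dim(X)-1$ per the paper's setup rather than $\dim(S_1)$.
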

\begin{proof}
	
	By induction, we will construct $\{U_k\}_{k=0}^\infty$ and $\{V_k\}_{k=0}^{\infty} $ satisfying for all $k\ge 0$,
	\begin{itemize}
		\item [(1)] $U_0=U$ and $V_0=V$;
		\item [(2)] $\In U_k =U_k\subset S$ and $\In V_k=V_k\subset S$;
		\item [(3)]$\Pa U_k\subset V_k$;
		\item [(4)]$\overline{V_{k+1}}\subset V_k$ and $U_k\subset U_{k+1}$; 
		\item [(5)]$U_{k+1}\subset U_k \cup V_k$;
		\item [(6)]for every $C^k=(C_0,C_1,\ldots, C_k)\in \CC(k)$, one has that $ Z_{C^n}^{\delta_0}\cap\bigcap_{j\in I} T_{C^k}^{-j}V_k=\emptyset $ for all $I\subset I_{C^k}$ with $0\in I$ and $\sharp I=d+1$.
	\end{itemize}
 Assuming that such  $\{U_k\}_{k=0}^\infty$ and $\{V_k\}_{k=0}^{\infty}$ have been constructed, we set $U'=\cup_{i=0}^{\infty} U_i$. It is clear that $U'$ is relatively open in $S'$. By $(4)$ and $(5)$, we have that
$$
U_{k+\ell}\cup V_k \subset U_{k+\ell-1}\cup V_{k+\ell-1}\cup V_k \subset U_{k+\ell-1}\cup V_k,
$$
for all $k,\ell>0$. It follows that
\begin{equation*}
U_{k+\ell}\cup V_k \subset  U_{k}\cup V_k,
\end{equation*}
and thus as by (4) $U'=\bigcup_{\ell\geq k} U_\ell$ for all $k$, 
\begin{equation*}
U_{k+1} \subset U'  \subset U_{k+1}\cup V_{k+1},
\end{equation*}
for all $k>0$. Then by $(2), (3), (4)$ 
we obtain that
\begin{equation}\label{eq:last thm 1}
\Pa U' \subset \overline{U_{k+1}\cup V_{k+1}}\setminus U_{k+1} \subset \Pa U_{k+1} \cup \overline{V_{k+1}} \subset V_{k},
\end{equation}
for all $k>0$.
In order to establish $\cocap(\Pa U')=0$, it is sufficient to show that any orbit of the flow hits the set $\Pa U'$ at most $d$ times. Let $x\in \Pa U'\subset S$ and assume for a contradiction that there are 
$0<t_1<t_2<\ldots<t_{d}$ such that $\Phi_{t_i}x\in \Pa U'\subset S$, $i=1,\ldots,d$. 

By Lemma \ref{lem:internal_return}, for some $n$, there is $C^n=(C_0, C_1, \dots, C_{n})\in \CC(n)$ with $x\in  Z_{C^n}^{\delta_0}$ and $I=\{j_i\}_{i=0}^d\subset I_{C^n}$ with $j_0=0<j_1,\ldots <j_d$
such that $T_{C^n}^{j_i}x=\Phi_{t_i}x$ 
for $i=0,\ldots, d$. Thus 
$$x\in Z_{C^n}^{\delta_0}\cap \bigcap_{j\in I} T_{C^n}^{-j}\Pa U'\subset Z_{C^n}^{\delta_0}\cap \bigcap_{j\in I} T_{C^n}^{-j}V_n\neq\emptyset,$$
which is a contradiction to $(6)$.

It remains to construct  $\{U_k\}_{k=0}^\infty$ and $\{V_k\}_{k=0}^{\infty} $. By (1), the sets $U_0$ and $V_0$ are well defined for $k=0$. Suppose that  $(U_i)_{i=0}^k$ and $(V_i)_{i=0}^k $ have been  constructed. Applying Lemma \ref{lem:n-general2} to $U_k,V_k$, we have a subset $U_{k+1}\subset S$ with $\In U_{k+1}=U_{k+1}$ such that
\begin{itemize}
	\item [(A1)] $U_k\subset U_{k+1}\subset U_k\cup V_k$;
	\item [(A2)] $\Pa U_{k+1}\subset V_k$;
	\item [(A3)] $U_{k+1}$ is $(k+1)$-general.
\end{itemize} 
Applying Lemma \ref{lem:open neigh intersection empty} and Remark \ref{rem:empty_int} to $A=\Pa U_{k+1}$, we have a set $V_{k+1}\subset S$ with $\In V_{k+1} = V_{k+1}$ such that
\begin{itemize}
	\item [(B1)] $\Pa U_{k+1}\subset V_{k+1}\subset \overline{V_{k+1}}\subset V_k $;
	\item [(B2)] for all $C^{k+1}=(C_0,\ldots,C_{k+1})\in \CC(k+1)$, it holds that $Z_{C^n}^{\delta_0}\cap \bigcap_{j\in I} T_{C^{k+1}}^{-j}V_{k+1}=\emptyset $ for all $I\subset I_{C^{k+1}}$ with $0\in I$ and $\sharp I=d+1$.
\end{itemize}

Note that the set $U_{k+1}$ and $V_{k+1}$ satisfy the conditions $(2)-(6)$. 
\end{proof}

We can now prove the main theorem:
\begin{proof}[Proof of Theorem \ref{main thm}]
	Fix a point $x\in X$ and a closed cross-section $S$ with $x\in \In S$. Using Lemma \ref{lem:complete family appendix}, one may find two complete families of cross-sections $\SS=\{S_i \}_{i=1}^{N}$ and $\SS'=\{S_i' \}_{i=1}^{N}$ obeying the conditions in the beginning of Section \ref{sec:notations}, with $x\in \In (S_1)\subset S_1 \subset S$. We now choose $U$ and $V$ in $S_1$ with $\In U=U$ and $\In V=V$ such that $x\in U \subset S_1$ and $\Pa U \subset V   \subset S_1$.
	Applying Theorem \ref{thm:SFBP} to $U$ and $V$, we have an open set $U'\subset S_1$ with $\In U'=U'$ and $U\subset U'\subset U\cup V$ such that $U'$ has a small flow boundary and $x\in U'$.  We thus conclude that $(X, \Phi)$ has the small flow boundary property.

\end{proof}

\section{Expansive flows have strongly isomorphic symbolic extensions}\label{sec:Expansive flows have strongly isomorphic symbolic extensions}
\subsection{Background}
In this section, we are interested in expansive flows. We first recall the definition of expansive flows and several properties of such flows. Then we build the symbolic extension of an expansive flow following Bowen and Walters. Finally we give a positive answer to an open question by Bowen and Walters, showing that any expansive topological flow has a strongly isomorphic symbolic extension.

The system $(X,T)$ is said to be {\bf expansive} if there exists an $\delta>0$  such that $\dist(T^nx,T^ny)<\delta$ for all $n\in \Z$ implies $x=y$ where $d$ is a compatible metric on $X$. For example, Anosov diffeomorphisms are expansive (see \cite{anosov1967geodesic}). Keynes and Robertson \cite{keybob} proved that every expansive homeomorphism has a symbolic extension. Moreover, when $X$ is $0$-dimensional, it is conjugate to a subshift. 
The expansive systems have been extensively investigated and many interesting properties have been proven. For example, expansiveness is invariant under topological conjugacy; the \textit{periodic growth}\footnote{The periodic growth of $(X,T)$ is defined as $\limsup_{n\rightarrow\infty}(1/n)\log |P_n|$ where $P_n$ is the set of $n$-periodic points.} and the topological entropy of an expansive discrete system is finite \cite{conze1968points}. Ma\~n\'e \cite{mane1979expansive} showed  that an expansive $\mathbb{Z}$-system must be  finite-dimensional. From this it follows that an expansive $\mathbb{Z}$-system without fixed points has the small boundary property (\cite{L95}). By \cite[Proposition C.1]{burguet2019symbolic} an expansive $\mathbb{Z}$-system with the small boundary property  admits an \textit{essential uniform generator} which induces a strongly isomorphic symbolic extension (\cite[Proposition 3.1]{burguet2019symbolic}). 

\begin{df}
	The flow $(X, \Phi)$ is said to be \textbf{expansive} if for any $\epsilon>0$, there exists $\delta>0$ such that if $\dist(\Phi_t(x), \Phi_{s(t)}y)<\delta$ for all $t\in \R$, for a pair of points $x,y\in X$ and a continuous map\footnote{By \cite[Theorem 3]{bowen1972expansive},  
one can require in the definition of expansiveness  for the function $s$ to be non-decreasing.} $s:\R \to \R$ with $s(0)=0$, then there exists $t\in \R$ with  $|t|< \epsilon$ such that $y=\Phi_t(x)$.
\end{df}
The notion of expansiveness of topological flows was introduced by Bowen and Walters \cite{bowen1972expansive}. 
We remark that the definition of expansiveness is clearly independent of the metric. By \cite[Theorem 5]{bowen1972expansive}, the entropy of an expansive flow is finite and for every $t\geq 0$ the number of periodic orbits of period in $[0,t]$ is finite. By \cite[Lemma 1]{bowen1972expansive}, an expansive flow has only a finite number of fixed points and each of them is an isolated point\footnote{Note that an expansive $\mathbb{Z}$-system has also only a finite number of fixed points. However, these points need not be isolated as witnessed by the (expansive) full shift on two letters.
}. This reduces the study of expansive flows to those without fixed points. Another important property of expansiveness which Bowen and Walters proved is that expansiveness is a conjugacy invariant for flows \cite[Corollary 4]{bowen1972expansive}. One may therefore argue that the definition of expansiveness by Bowen and Walters is the correct one (see also the discussion of other attempts in \cite[Page 180-181]{bowen1972expansive}).

 Keynes and Sears \cite{keynes1981real} extended the above mentioned result of Ma\~n\'e, showing that expansive flows must be finite-dimensional. It can be shown that Anosov flows are expansive (\cite{anosov1967geodesic}). The geodesic flow on a compact smooth manifold of negative curvature is an Anosov flow and thus expansive; also, Axiom A flows are expansive on their nonwandering sets \cite{bowen1972periodic}.

\subsection{Bowen and Walters' construction}\label{sec:Bowen and Walters}
In this section, we recall the construction of Bowen and Walters, i.e.\ the symbolic extension of an expansive flow, following the exposition in \cite[Section 5]{bowen1972expansive}. We start by proving lemmas which do not necessitate assuming expansiveness.  

Let $(X, \Phi)$ be a topological flow without fixed points. By Lemma \ref{lem:complete family appendix}, we may find constants $0<\alpha<\eta$ such that there is a complete family $\mathcal{S}=\{S_i \}_{i=1}^N$  of injectivity time $\eta>0$ with $\diam(S_i)\leq \alpha$ for all $i$ and $\Phi_{[0,\alpha]}\mathcal{G}=\Phi_{[-\alpha, 0]}\mathcal{G}=X$, where $\mathcal{G}=\cup_{1\le i\le N}S_i$. Since $S_i$ are pairwise disjoint and compact, there is $\beta>0$ so that $\Phi_{[0, \beta]}(S_i)$ are pairwise disjoint for all $i$. 
For $x\in \mathcal{G}$, let $r_i(x)$ be the $i$-th return time to $\mathcal{G}$ (see Definition \ref{def:i_th return map}). As $\Phi_{[-\alpha, 0]}\mathcal{G}=X$, every $y\in \mathcal{G}$, has some $x\in \mathcal{G}$ and $0\leq s\leq \alpha$ so that $\Phi_{-s}x=y$, i.e. $\Phi_{s}y=x\in \mathcal{G}$. It follows that $0<\beta\le r_{i+1}(x)-r_i(x)\le \alpha$ for all $x\in \mathcal{G}$. Denote by $\Sigma=\{1,2,\dots, N\}$ and $\sigma$ the (left) shift on $\Sigma^{\mathbb{Z}}$. 
Define
\[
\mathcal{G} \rtimes \Sigma^{\mathbb{Z}}:=\{ (x, \mathbf{q})\in \mathcal{G} \times \Sigma^{\mathbb{Z}}: \exists \mathbf{t}\in \R^{\Z} ~\text{so that}~t_0=0, t_{i+1}-t_i\in [\beta,\alpha], \Phi_{t_i}(x)\in S_{q_i} \}.
\]
The sequence $\mathbf{t}$ appearing in the definition is called a \textbf{return-time sequence} for $(x, \mathbf{q})$; $\mathbf{q}$ is called a \textbf{return-name sequence} for $x$ and $x$ is called a \textbf{realization} of $\mathbf{q}$.
A return-time sequence $\mathbf{t}$ for $x$ is any return-time sequence of the form $(x, \mathbf{q})\in \mathcal{G} \rtimes \Sigma^{\mathbb{Z}}$ for some $\mathbf{q}$.

Note that the sequence $\mathbf{r}=\{r_i(x)\}_{i=-\infty}^{\infty}$ is the \textbf{maximal} return-time sequence for $x$ in the sense that if $\mathbf{t}=\{t_i\}_{i=-\infty}^{\infty}$ is a return-time for $x$, then $\{t_i\}_{i=-\infty}^{\infty}\subset \{r_i(x)\}_{i=-\infty}^{\infty}$. Other return-time sequences than $\mathbf{r}$ for $x$ may exist. This is the case if for example for some $i$, $r_{i+1}-r_{i-1}\leq \alpha$, as one can remove $r_i$ from  $\mathbf{r}$ and still have a return-time sequence for $x$. 
In contrast Lemma \ref{lem:unique_return_time} shows that  $(x, \mathbf{q})$ has a unique return-time sequence. In addition in Lemma \ref{lem:unique return-time}, assuming that  $(X, \Phi)$ is expansive, it is shown that the return-name sequence determines a realization uniquely.

\begin{lem}\label{lem:closed appendix}
Let $(X, \Phi)$ be a topological flow without fixed points. Then $\mathcal{G} \rtimes \Sigma^{\mathbb{Z}}$ is closed in $\mathcal{G} \times \Sigma^{\mathbb{Z}}$.
\end{lem}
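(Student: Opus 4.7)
The plan is to verify closedness sequentially. Suppose $(x^{(n)}, \mathbf{q}^{(n)}) \in \mathcal{G} \rtimes \Sigma^{\mathbb{Z}}$ converges to $(x, \mathbf{q}) \in \mathcal{G} \times \Sigma^{\mathbb{Z}}$; one must exhibit a return-time sequence $\mathbf{t}$ witnessing that $(x, \mathbf{q}) \in \mathcal{G} \rtimes \Sigma^{\mathbb{Z}}$. Note first that $\mathcal{G}$ is closed since it is a finite union of the closed cross-sections $S_i$, and that $\Sigma^{\mathbb{Z}}$ carries the product topology with $\Sigma$ discrete; hence for every fixed $i \in \mathbb{Z}$, one has $q_i^{(n)} = q_i$ for all sufficiently large $n$.

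For each $n$ let $\mathbf{t}^{(n)} = (t_i^{(n)})_{i \in \mathbb{Z}}$ be a return-time sequence witnessing $(x^{(n)}, \mathbf{q}^{(n)}) \in \mathcal{G} \rtimes \Sigma^{\mathbb{Z}}$. The crucial observation is that the constraints $t_0^{(n)} = 0$ and $t_{i+1}^{(n)} - t_i^{(n)} \in [\beta, \alpha]$ telescope to give $t_i^{(n)} \in [i\beta, i\alpha]$ for $i \geq 0$ and $t_i^{(n)} \in [i\alpha, i\beta]$ for $i \leq 0$. Thus for each fixed $i$ the sequence $(t_i^{(n)})_n$ is bounded, and a standard diagonal extraction yields a subsequence (not relabeled) along which $t_i^{(n)} \to t_i$ for every $i \in \mathbb{Z}$ simultaneously. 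Taking limits preserves the inequalities, so $t_0 = 0$ and $t_{i+1} - t_i \in [\beta, \alpha]$ for all $i$.

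It remains to verify $\Phi_{t_i}(x) \in S_{q_i}$. Fix $i$; for $n$ large, $q_i^{(n)} = q_i$, and continuity of $\Phi$ gives
\[
\Phi_{t_i}(x) = \lim_{n\to\infty} \Phi_{t_i^{(n)}}(x^{(n)}) \in \overline{S_{q_i}} = S_{q_i},
\]
since each $S_{q_i}$ is closed. Thus $\mathbf{t}$ is a return-time sequence for $(x, \mathbf{q})$, proving $(x, \mathbf{q}) \in \mathcal{G} \rtimes \Sigma^{\mathbb{Z}}$.

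No step is really an obstacle here; the only thing to be careful about is the simultaneous diagonal extraction over all $i \in \mathbb{Z}$, and the fact that the lower bound $\beta > 0$ on gap sizes (available precisely because the $\Phi_{[0,\beta]}(S_i)$ are pairwise disjoint, as noted just before the definition of $\mathcal{G}\rtimes\Sigma^{\mathbb{Z}}$) is what keeps each $t_i^{(n)}$ bounded rather than escaping to $\pm\infty$.
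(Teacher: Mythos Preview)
Your proof is correct and follows essentially the same approach as the paper's: extract a convergent return-time sequence by diagonalization using the bound $|t_i^{(n)}|\le |i|\alpha$, then pass to the limit using continuity of $\Phi$ and closedness of each $S_{q_i}$. One small slip in your closing remark: it is the \emph{upper} bound $\alpha$ on the gaps that yields $|t_i^{(n)}|\le |i|\alpha$ and hence boundedness, not the lower bound $\beta$ (the latter is needed only so that the limit sequence still satisfies $t_{i+1}-t_i\ge\beta$).
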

\begin{proof}
    Let $(x^n, \mathbf{q}^n)\in \mathcal{G} \rtimes \Sigma^{\mathbb{Z}}$ which converge to $(x, \mathbf{q})$ as $n\to \infty$. Let $\mathbf{t}^n$ be a return-time sequence of $(x^n, \mathbf{q}^n)$. Since $|t_i^n|\le |i|\alpha$ for every $i\in \Z$, we may assume that the limit $\lim_{n\to \infty}t_i^n$ exists, say $t_i$ for each $i\in \Z$. Since $\Sigma$ is finite, we see that for each $i\in \Z$, $q_i^n=q_i$ for large $n>n(i)$. Since $S_{q_i}$ is closed for each $i\in \Z$, we obtain that $\Phi_{t_i}(x)=\lim_{n\to \infty} \Phi_{t_i^n} (x^n)\in S_{q_i}$ for each $i\in \Z$. This implies that $(x,\mathbf{q})\in \mathcal{G} \rtimes \Sigma^{\mathbb{Z}}$ with return-time sequence $\mathbf{t}=(t_i)_{i\in \Z}$.
\end{proof}

\begin{lem}\label{lem:unique_return_time}
Let $(X, \Phi)$ be a topological flow without fixed points. Then for each $(x, \mathbf{q})\in \mathcal{G} \rtimes \Sigma^{\mathbb{Z}}$, the return-time sequence $\mathbf{t}$ is unique.
\end{lem}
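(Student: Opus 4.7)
The plan is to fix $(x,\mathbf{q})\in \mathcal{G}\rtimes\Sigma^{\mathbb{Z}}$, suppose for contradiction that two distinct return-time sequences $\mathbf{t}=(t_i)$ and $\mathbf{t}'=(t_i')$ realize it, and derive a contradiction by induction, propagating the equality $t_0=t_0'=0$ in both directions. The main ingredient is the injectivity of $\Phi$ on each cross-section's time cylinder, combined with the assumption $\alpha<\eta$ made in the setup preceding the lemma.

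Concretely, I would proceed as follows. Assume inductively that $t_i=t_i'$ for some $i\in\mathbb{Z}$, and set $y=\Phi_{t_i}(x)\in S_{q_i}$. Then $s:=t_{i+1}-t_i$ and $s':=t_{i+1}'-t_i'$ both lie in $[\beta,\alpha]$, and both $\Phi_s(y)$ and $\Phi_{s'}(y)$ belong to $S_{q_{i+1}}$. Writing $z=\Phi_s(y)$ and $w=\Phi_{s'}(y)$, we obtain $w=\Phi_{s'-s}(z)$ with $z,w\in S_{q_{i+1}}$ and $|s'-s|\le \alpha<\eta$. At this point one may invoke either the injectivity of $\Phi$ on $S_{q_{i+1}}\times[-\eta,\eta]$ (which forces $(w,0)=(z,s'-s)$, hence $s=s'$), or equivalently the ``trivial observation'' stated just after the definition of a complete family, which says that two hits of the same cross-section of injectivity time $\eta$ must be separated by more than $2\eta$. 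Either way we conclude $t_{i+1}=t_{i+1}'$. The symmetric argument run with $s,s'\in[-\alpha,-\beta]$ handles the backward step $t_{i-1}=t_{i-1}'$. Iterating gives $\mathbf{t}=\mathbf{t}'$.

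I do not anticipate any serious obstacle: the only subtlety is verifying that the quantities $|s'-s|$ stay strictly below the injectivity threshold, and this is exactly what the choice $\alpha<\eta$ (imposed without loss of generality at the start of Section \ref{sec:Bowen and Walters}) guarantees. The role of the lower bound $\beta$ is not needed here; it is the upper bound $\alpha<\eta$ that does the work, ensuring that consecutive prescribed hits of $S_{q_{i+1}}$ cannot occur at two distinct times close enough together to be compatible with the cross-section being a cross-section at all.
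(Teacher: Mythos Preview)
Your argument is correct and essentially identical to the paper's: both rest on the observation that if $t_i=t_i'$ then $|t_{i+1}-t_{i+1}'|\le\alpha<\eta$, which forces equality by the injectivity time of $S_{q_{i+1}}$. The paper phrases it as a contradiction at the first index of disagreement rather than an induction, but this is only a cosmetic difference.
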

\begin{proof}
    Suppose that $\mathbf{t}$ and $\mathbf{t}'$ are two distinct return-time sequences of $(x, \mathbf{q})\in  \mathcal{G} \rtimes \Sigma^{\mathbb{Z}}$. Without loss of generality, we assume that $(t_i)_{i>0}\not= (t_i')_{i>0}$. Let $k=\min\{i>0: t_i\not=t_i' \}$. It follows that $t_{k-1}=t_{k-1}'$ and $\Phi_{t_k}(x), \Phi_{t_k'-t_k}(\Phi_{t_k}(x))\in S_{q_k}$. Since $t_k-t_{k-1}\leq\alpha$ and $t_k'-t_{k-1}'\leq\alpha$, we have that $|t_k'-t_k|\le \alpha<\eta$ and consequently $t_k=t_k'$. This is a contradiction.
\end{proof}
Thus we may define the map:
$$
\mathbf{t}: \mathcal{G} \rtimes \Sigma^{\mathbb{Z}} \to \R^\Z.
$$

\begin{lem}
Let $(X, \Phi)$ be a topological flow without fixed points. Then the map $\mathbf{t}$ is continuous.
\end{lem}
\begin{proof}
    It suffices to show that $t_i: \mathcal{G} \rtimes \Sigma^{\mathbb{Z}} \to [-|i|\alpha,|i|\alpha]\subset \R$ is continuous for each $i\in \Z$. Assume $(x^n, \mathbf{q}^n)\in \mathcal{G} \rtimes \Sigma^{\mathbb{Z}}$ converge to $(x, \mathbf{q})$ as $n\to \infty$. Let $\{t_i(x^{n_j}, \mathbf{q}^{n_j})\}_j$ be an arbitrary converging subsequence. By the argument of Lemma 4.3, $\lim_{j\rightarrow \infty} t_i(x^{n_j}, \mathbf{q}^{n_j})=t_i(x, \mathbf{q})$. This establishes continuity. 
    \end{proof}

From now onward we assume that $(X, \Phi)$ is expansive. We will use the following characterization:

\begin{thm}[\cite{bowen1972expansive}, Theorem 3]\label{Thm:expansive appendix}
    A topological flow without fixed points $(X, \Phi)$ is expansive if and only if for any $\epsilon>0$ there exists $\alpha'=\alpha'(\epsilon)>0$ such that the following holds: if $\mathbf{t}=(t_i)_{i\in \mathbb{Z}}$ and $\mathbf{u}=(u_i)_{i\in \mathbb{Z}}$ with $t_0=u_0=0$, $0<t_{i+1}-t_i\le \alpha'$, $|u_{i+1}-u_i|\le \alpha'$, $t_i\to \infty$ and $t_{-i}\to -\infty$ as $i\to \infty$ and if $x,y\in X$ satisfy $\dist(\Phi_{t_i}(x), \Phi_{u_i}(y))\le \alpha'$ for all $i\in \mathbb{Z}$, then there exists $t\in \R$ with  $|t|\le \epsilon$ such that $y=\Phi_t(x)$.
\end{thm}

W.l.o.g.\ we assume $0<\alpha<\alpha'(\eta)$ as above.
Let $P_2: \mathcal{G} \rtimes \Sigma^{\mathbb{Z}} \to \Sigma^{\mathbb{Z}}$ be the projection on the second coordinate. 

\begin{lem}\label{lem:unique return-time}
Let $(X, \Phi)$ be an expansive topological flow without fixed points. Then the map $P_2$ is injective. 
\end{lem}
\begin{proof}
    Suppose that $(x, {\bf q}), (y, {\bf q}) \in \mathcal{G} \rtimes \Sigma^{\mathbb{Z}}$. Since $(X, \Phi)$ is expansive, by Theorem \ref{Thm:expansive appendix}, we see that $x=\Phi_{t}(y)$ for some $t$ with $|t|\le \eta$. Since $x,y\in S_{q_0}$ and $S_{q_0}$ is a cross-section  of injectivity time $\eta$, we have that $x=y$.
\end{proof}
Let $\mathcal{Z}:=\cup_{1\le i\le N}\Pa (S_i)$ and $\mathcal{W}$ be the set of points whose orbit does not intersect $\mathcal{Z}$. In other words,
\begin{equation}\label{eq:W}
\mathcal{W}=X\setminus \bigcup_{r\in \mathbb{R}} \Phi_{r}(\mathcal{Z}).
\end{equation}
Clearly, the set $\mathcal{W}$ is $\Phi$-invariant, i.e.\ $\Phi_t(\mathcal{W})=\mathcal{W}$ for all $t\in \mathbb{R}$.  
Define 
$$\mathcal{V}=\mathcal{W}\cap \mathcal{G}.$$
 Recall the definition of $r_i(x)$ above. Define $Q: \mathcal{V} \to \Sigma^{\mathbb{Z}}$ by $Q(x)=(Q_i(x))_{i\in \mathbb{Z}}$ where $Q_i(x)=j$ if $\Phi_{r_i(x)}(x)\in S_j$. Note that the map $Q$ sends an element in $\mathcal{V}$ to the return name-sequence associated with its maximal return time-sequence, and  is not necessarily continuous. If $x\in \mathcal{V}$, then also  $\Phi_{r_1(x)}(x)\in \mathcal{V}$, thus $Q(\mathcal{V})$ is $\sigma$-invariant. 
Define $$\Lambda=\overline{Q(\mathcal{V})}.$$
Thus $\Lambda$ is the closure of the set of return name-sequences associated with the maximal return time-sequences of elements in $\mathcal{V}$. Clearly  $(\Lambda, \sigma)$ is a subshift. 
As the map $P_2$ is continuous and injective, its inverse $P_2^{-1}: P_2(\mathcal{G} \rtimes \Sigma^{\mathbb{Z}}) \to \mathcal{G} \rtimes \Sigma^{\mathbb{Z}}$ is continuous.  Clearly, for $x\in \mathcal{V}$, the point $(x, Q(x))\in \mathcal{G} \rtimes \Sigma^{\mathbb{Z}}$ and consequently $Q(x)\in P_2(\mathcal{G} \rtimes \Sigma^{\mathbb{Z}})$. Since $P_2(\mathcal{G} \rtimes \Sigma^{\mathbb{Z}})$ is compact and contains $Q(\mathcal{V})$, we see that $\Lambda$, which is the closure of $Q(\mathcal{V})$, is contained in $P_2(\mathcal{G} \rtimes \Sigma^{\mathbb{Z}})$.
Let $P_1: \mathcal{G} \rtimes \Sigma^{\mathbb{Z}} \to \mathcal{G}$ be the projection on the first coordinate. Define the continuous map
$P: \Lambda \to \mathcal{G}$ by 
$$P({\bf q})=P_1(P_2^{-1}({\bf q}))$$ for ${\bf q} \in \Lambda$. Thus, $P$ associates with a return-name sequence in $\Lambda$ its realization in $\mathcal{G}\subset X$. Clearly, $P\circ Q$ is the identity on $\mathcal{V}$.

We define the continuous function $f: \Lambda \to [\beta, \alpha]$ by 
$$
f({\bf q})=t_1(P_2^{-1}{\bf q}).
$$

\begin{rem}\label{rem:f}
 From the proof of Lemma \ref{lem:unique_return_time} it is clear that $f({\bf q})$ is the first positive time when the realization of $\bf q$  in $\mathcal{G}$ returns to  $S_{q_1}$ but it is possible that it returns to $\mathcal{G}$ earlier. However, if $\bf q\in Q(\mathcal{V})$ then $f({\bf q})$ is the first positive return time to $\mathcal{G}$ of the realization of $\bf q$ (due to the definition above of $Q$ involving maximal return time-sequences).
\end{rem}

\begin{df}
Denote by $(\Lambda_{f}, \Psi)$ the suspension flow over $(\Lambda, \sigma)$. Define $\pi: \Lambda_f \to X$ by
$$
\pi(({\bf q}, t))=\Phi_t(P({\bf q})).
$$
\end{df}
\begin{thm}[Theorem 10, \cite{bowen1972expansive}]\label{thm:symbolic appen 1}
   Let $(X, \Phi)$ be an expansive topological flow without fixed points. Then the flow $(X, \Phi)$ is a factor of $(\Lambda_{f}, \Psi)$ via $\pi$. 
\end{thm}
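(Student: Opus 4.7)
The goal is to verify that $\pi$ is a well-defined continuous surjection from the quotient $\Lambda_f$ to $X$ intertwining $\Psi$ and $\Phi$. Continuity and equivariance follow readily from the construction, so the substantive content lies in well-definedness on $\Lambda_f$ and in surjectivity.

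For well-definedness, the map $(\mathbf{q},t)\mapsto\Phi_t(P(\mathbf{q}))$ on $\Lambda\times\mathbb{R}$ is continuous since $P=P_1\circ P_2^{-1}$ is continuous on the closed set $\Lambda\subset P_2(\mathcal{G}\rtimes\Sigma^{\mathbb{Z}})$ (using Lemmas \ref{lem:closed appendix} and \ref{lem:unique return-time}). To descend to $\Lambda_f$ requires $\Phi_{f(\mathbf{q})}(P(\mathbf{q}))=P(\sigma\mathbf{q})$ for every $\mathbf{q}\in\Lambda$. Setting $x=P(\mathbf{q})$, the unique return-time sequence $\mathbf{t}$ of $(x,\mathbf{q})$ (Lemma \ref{lem:unique_return_time}) satisfies $t_1=f(\mathbf{q})$, and the shifted sequence $(t_{i+1}-t_1)_{i\in\mathbb{Z}}$ exhibits $(y,\sigma\mathbf{q})\in\mathcal{G}\rtimes\Sigma^{\mathbb{Z}}$ with $y:=\Phi_{f(\mathbf{q})}(x)\in S_{q_1}$; injectivity of $P_2$ then forces $P(\sigma\mathbf{q})=y$, as required. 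Equivariance $\pi\circ\Psi_s=\Phi_s\circ\pi$ is immediate from the semigroup law $\Phi_{t+s}=\Phi_s\Phi_t$.

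For surjectivity, $\pi(\Lambda_f)$ is closed by compactness of $\Lambda_f$ and continuity of $\pi$, and $\Phi$-invariant by equivariance. Since $P\circ Q=\mathrm{id}_{\mathcal{V}}$ gives $\mathcal{V}=P(Q(\mathcal{V}))\subset P(\Lambda)\subset\pi(\Lambda_f)$, we obtain $\pi(\Lambda_f)\supset\Phi_{\mathbb{R}}\mathcal{V}=\mathcal{W}$, so it suffices to prove that $\mathcal{W}$ is dense in $X$. This density step is the main obstacle. The plan is to show that $X\setminus\mathcal{W}=\bigcup_{r\in\mathbb{R}}\Phi_r(\mathcal{Z})$ is meager by exploiting the flow boundary structure: by Definition \ref{def:Flow boundaries and interiors}, $\Phi_{(-\eta,\eta)}(\Pa S_i)$ is contained in the topological boundary of the $\eta$-cylinder $\Phi_{[-\eta,\eta]}(S_i)$, hence is nowhere dense in $X$; the endpoint pieces $\Phi_{\pm\eta}(\Pa S_i)$ are nowhere dense because cross-sections have empty interior in $X$ (by the injectivity condition, or equivalently by the strict inequality $\dim S_i\le d<d+1=\dim X$ from Lemma \ref{lem:dim_cross-section}). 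Thus each $\Phi_{[-\eta,\eta]}(\Pa S_i)$ is nowhere dense, and the countable decomposition
$$\bigcup_{r\in\mathbb{R}}\Phi_r(\Pa S_i)=\bigcup_{k\in\mathbb{Z}}\Phi_{k\eta}\bigl(\Phi_{[-\eta,\eta]}(\Pa S_i)\bigr)$$
exhibits $X\setminus\mathcal{W}$ as a countable union of nowhere dense sets; Baire's theorem then delivers density of $\mathcal{W}$ and completes the surjectivity argument.
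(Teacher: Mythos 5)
Your proposal is correct and takes essentially the same route as the paper: well-definedness via the unique return-time sequence of $(P(\mathbf{q}),\mathbf{q})$ shifted by $t_1=f(\mathbf{q})$ together with injectivity of $P_2$, and surjectivity by observing that the closed $\Phi$-invariant image contains $\mathcal{V}$, hence $\mathcal{W}$, whose density follows from a Baire-category argument on $\bigcup_{r}\Phi_r(\mathcal{Z})$. The only difference is that you spell out the nowhere-density of $\Phi_{[-\eta,\eta]}(\Pa S_i)$ (splitting it into the part inside $\partial\Phi_{[-\eta,\eta]}(S_i)$ and the two endpoint slices), a step the paper dismisses as easy and delegates to a citation.
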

\begin{proof}
First, we show that $\pi$ is well defined, i.e.\ that it holds $\pi(\q, f(\q))=\pi(\sigma(\q), 0)$. This implies easily that $\pi$ is $\R$-equivariant. By the definition of $\pi$, one has to show $\pi(({\bf q}, f({\bf q})))=\Phi_{f(\q)}(P(\q))=P(\sigma(\q))=\pi((\sigma({\bf q}), 0))$. It is enough to show  $(\Phi_{f(\q)}(P(\q)), \sigma(\q))\in \mathcal{G} \rtimes \Sigma^{\mathbb{Z}}$ as the return-name sequence determines the realization. Indeed, we claim that the associated return-time sequence of $(\Phi_{f(\q)}(P(\q)), \sigma(\q))$ is $\{{t}_{i+1}(P(\q), \q)-t_1(P(\q), \q)\}_{i\in \Z}$, which is confirmed by:
    $$
    \Phi_{{t}_{i+1}(P(\q), \q)-t_1(P(\q), \q)} (\Phi_{f(\q)}(P(\q)))=\Phi_{{t}_{i+1}(P(\q), \q)} (P(\q)) \in S_{q_{i+1}},
    $$
    for $i\in \Z$, where we used that $t_1(P(\q), \q)=f(\q)$ (see Remark \ref{rem:f}).

Finally, we note that the image $\pi(\Lambda_f)$ is a $\Phi$-invariant closed subset of $X$ which contains $\mathcal{V}$ as for $x\in \mathcal{V}$ it holds $\pi((Q(x),0))=P\circ Q(x)=x$. 

From Definition  \ref{def:Flow boundaries and interiors} it is easy to see that the closed set $\Phi_{[-\eta, \eta]}\mathcal{Z}$ has empty interior (see also \cite[Lemma 2.4(1)]{burguet2019symbolic}). By Baire category theorem this implies that  $\mathcal{W}=\bigcup_{r\in \mathbb{R}} \Phi_{r}(\mathcal{V})
$ is dense in $X$. We conclude that $\pi(\Lambda_f)=X$.
\end{proof}

The following lemma is crucial for the proof of Theorem \ref{thm:symbolic appen} in the sequel.

\begin{lem}\label{lem:one-to-one appendix}
Let $(X, \Phi)$ be an expansive topological flow without fixed points. Let $(\q,t)\in \Lambda_f$ for some $0\leq t<f(\q)$. If $\pi((\q,t))=z\in \mathcal{V}$, then $t=0$ and $\q=Q(z)$.
\end{lem}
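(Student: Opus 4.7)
The strategy is to approximate $\q$ by return-name sequences of bona fide orbits in $\mathcal{V}$ and exploit the fact that $z \in \mathcal{V}$ forces all returns near $z$'s orbit to be transverse (landing in the flow interior of the corresponding cross-section). Write $y := P(\q)$, so that $z = \Phi_t(y)$. Since $\mathcal{W}$ is $\Phi$-invariant and $z \in \mathcal{V} \subset \mathcal{W}$, we have $y \in \mathcal{W}$ as well, and since $y \in \mathcal{G}$ this yields $y \in \mathcal{V}$. Because $\q \in \Lambda = \overline{Q(\mathcal{V})}$, pick $y_n \in \mathcal{V}$ with $Q(y_n) \to \q$ in $\Sigma^{\mathbb{Z}}$. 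Continuity of $P$ together with $P \circ Q = \mathrm{id}_{\mathcal{V}}$ gives $y_n = P(Q(y_n)) \to P(\q) = y$, and in particular $\Phi_t(y_n) \to z$.

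Suppose for contradiction that $t > 0$. Writing $z \in S_k$, the condition $z \in \mathcal{W}$ forbids $z \in \mathcal{Z}$, so $z \in \In(S_k)$. Consequently $z$ has an open neighborhood of the form $\Phi_{(-\epsilon, \epsilon)}(V)$ with $V \subset \In(S_k)$ relatively open in $S_k$, and we may choose $\epsilon < \min\{t, (f(\q) - t)/2\}$. For all large $n$, $\Phi_t(y_n)$ lies in this neighborhood, so there exists $s_n$ with $|s_n| < \epsilon$ and $\Phi_{t + s_n}(y_n) \in S_k \subset \mathcal{G}$. Thus $y_n$ has a positive return to $\mathcal{G}$ at time $\leq t + \epsilon$, so
\[
r_1(y_n) \,\leq\, t + \epsilon \,<\, f(\q).
\]
On the other hand, because $Q(y_n)$ is the maximal return-name sequence of $y_n$, we have $r_1(y_n) = t_1(y_n, Q(y_n))$; by the continuity of the return-time map $\mathbf{t}$ established right after Lemma \ref{lem:unique_return_time}, this converges to $t_1(y, \q) = f(\q)$, contradicting the displayed inequality. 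Hence $t = 0$ and $y = z$.

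It remains to identify $\q$ with $Q(z)$. Set $j_i := Q(z)_i$, so that $\Phi_{r_i(z)}(z) \in S_{j_i}$; since $z \in \mathcal{W}$, each of these points lies in $\In(S_{j_i})$. The same cylinder argument as above shows that for each fixed $i$ and any small $\delta > 0$, the orbit of $y_n$ hits $S_{j_i}$ within time $\delta$ of $r_i(z)$ for all large $n$. Combined with the uniform lower bound $\beta$ on consecutive return gaps, and the fact that on each compact subinterval $[r_{i-1}(z) + \delta, r_i(z) - \delta]$ the orbit of $z$ is bounded away from the closed set $\mathcal{G}$, which persists for nearby orbits, one shows inductively that $r_i(y_n) \to r_i(z)$ and $\Phi_{r_i(y_n)}(y_n) \in S_{j_i}$ for large $n$, i.e.\ $Q(y_n)_i = j_i$. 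Passing to the limit gives $\q_i = Q(z)_i$ for every $i$, so $\q = Q(z)$.

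The main obstacle is the inductive continuity $r_i(y_n) \to r_i(z)$: while it is heuristically clear from transversality at each $\Phi_{r_i(z)}(z)$ (granted by $z \in \mathcal{W}$) together with the minimum gap $\beta$, it requires combining these two ingredients carefully to rule out spurious or missing returns of $y_n$ in any fixed compact window. No new tool beyond those already developed in Section \ref{sec:Expansive flows have strongly isomorphic symbolic extensions} is needed.
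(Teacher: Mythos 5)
Your proof is correct, and its core --- approximating $\q$ by $Q(y_n)$ with $y_n\in\mathcal{V}$, using $z\in\mathcal{W}$ to place every return of the orbit of $z$ in the flow interior of the relevant cross-section, and then combining transversality at the returns with the persistence of $\Phi_s(z)\notin\mathcal{G}$ between returns (plus the gap $\beta$) to force $Q(y_n)_i=Q(z)_i$ on any window --- is exactly the paper's argument; the paper merely packages your "inductive continuity" step into the single explicit open neighborhood $O_N$ of $x$, whose definition records precisely your two ingredients (hitting $\In(S_{Q_i(x)})$ in a $\beta/2$-window around each $t_i$, and missing $\mathcal{G}$ on $E_N$), so the obstacle you flag is real but is resolved by exactly the tools you name. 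The one genuine divergence is the order of the two conclusions: you prove $t=0$ first, by a separate flow-box argument at $z$ combined with continuity of the return-time map $\mathbf{t}$ (which is valid, since $(y_n,Q(y_n))\to(y,\q)$ in $\mathcal{G}\rtimes\Sigma^{\mathbb{Z}}$ and $t_1(y_n,Q(y_n))=r_1(y_n)$), whereas the paper first proves $\q=Q(x)$ for $x=P(\q)=\Phi_{-t}(z)\in\mathcal{V}$ and then gets $t=0$ for free from Remark \ref{rem:f}: once $\q\in Q(\mathcal{V})$, $f(\q)$ is the \emph{first} positive return time of $x$ to $\mathcal{G}$, so $\Phi_t(x)=z\in\mathcal{G}$ with $0\le t<f(\q)$ forces $t=0$. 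Your route costs an extra (correct) compactness argument that the paper's ordering makes unnecessary; if you run your second half with $y=P(\q)$ in place of $z$ before knowing $t=0$, you recover the paper's shorter derivation.
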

\begin{proof}
By definition $\pi(\Lambda_{f}\times \{0\})\subset \GG$. Thus  $x:=\Phi_{-t}(z)\in \mathcal{G}\cap \mathcal{W}=\mathcal{V}$.  Moreover by the definition and equivariance of $\pi$, $x=\pi(\q,0)=P(\q)$. We will show $\q=Q(x)$. 
Note that as $z=\Phi_{t}(x),x\in \mathcal{V}$, $t$ is a return-time of $x$ to $\mathcal{G}$. By assumption $0\leq t<f(\q)$ and $f(\q)$ is the first positive return time to $\mathcal{G}$ (see Remark \ref{rem:f}). Thus $\q=Q(x)$ implies that $t=0$ and from this it holds as desired, that $\q=Q(x)=Q(\Phi_{t}(x))=Q(z)$. 
To show $\q=Q(x)$, pick $x_n\in \mathcal{V}$ such that $Q(x_n)\to \q$ as $n\to \infty$.
As $P$ is continuous and $(P\circ Q)_{|\mathcal{V}}=\id$, we see that $x_n=P(Q(x_n))\to P(\q)=x$ as $n\to \infty$. Let ${\bf t}={\bf t}(x, Q(x))$. For $N>0$, let 
    $$
    E_N=\bigcup_{i=-N}^{N-1} [t_i+\beta/2, t_{i+1}-\beta/2]
    $$
    and define the open set
    $$
    O_N=\left( \bigcap_{i=-N}^{N} \Phi_{-(t_i-\beta/2, t_i+\beta/2) }\In(S_{Q_i(x)})\right) \setminus \Phi_{-E_N} \mathcal{G}.
    $$
Note that for $i=-N,\ldots, N$, $$\Phi_{(t_i-\beta/2, t_i+\beta/2) }x\in Q_i(x) \text{ and }x\notin \Phi_{[t_i+\beta/2, t_{i+1}-\beta/2]}\GG.$$   As $x\in\mathcal{V}$, it holds that for all $t\in\R$, $\Phi_t x\notin\mathcal{Z}$. Thus, if $\Phi_t x\in S_i$, then in effect $\Phi_t x\in\In S_i$. We conclude $O_N$ is open and contains $x$.
    
    Note that if $y\in O_N\cap \mathcal{V}$, then $Q_i(y)=Q_i(x)$ for $-N\le i\le N$. It follows that for each $N$, there exists $m(N)$ such that for $n>m(N)$, $Q_i(x_n)=Q_i(x)$ for $-N\le i\le N$. This implies that $Q(x_n)\to Q(x)$ as $n\to \infty$.  Recall that $Q(x_n)\to \q$ as $n\to \infty$. Thus we obtain that $\q=Q(x)$ as desired.
\end{proof}

\subsection{Bowen and Walters' question answered}
Bowen and Walters asked the following question:

\begin{question}(\cite[Problem, p. 192]{bowen1972expansive})
Let $(X,\Phi)$ be an expansive topological flow without fixed points. Can the symbolic extension $\pi$ in Theorem \ref{thm:symbolic appen 1} be made entropy-preserving by choosing carefully the cross-sections $\{S_i\}_{i=1}^{N}$?
\end{question}


 
\begin{thm}\label{thm:symbolic appen}
    Let $(X,\Phi)$ be an expansive flow without fixed points. Then $\pi$ is strongly isomorphic.
\end{thm}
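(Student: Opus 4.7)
The plan is to apply Theorem A so as to arrange that the ``bad set'' associated with the cross-section boundaries in Bowen--Walters' construction is negligible for every invariant measure, and then to use Lemma~\ref{lem:one-to-one appendix} together with the $\Phi$-equivariance of $\pi$ to upgrade fibrewise uniqueness from $\mathcal{V}$ to its full $\Phi$-saturation $\mathcal{W}$.

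First, I check that Theorem A applies: an expansive topological flow without fixed points is finite-dimensional by \cite{keynes1981real}, and by \cite[Theorem 5]{bowen1972expansive} it has only countably many periodic orbits, so it has the small flow boundary property. The key preparatory step is to refine the complete family $\mathcal{S} = \{S_i\}_{i=1}^N$ used in Section~\ref{sec:Bowen and Walters} so that \emph{each} $S_i$ has a small flow boundary. Starting from the family supplied by Lemma~\ref{lem:complete family appendix}, I would apply the small flow boundary property pointwise inside each $S_i$, extract a finite subcover by sub-cross-sections of small flow boundary via compactness, and disjointify. This yields a complete family with the same injectivity time $\eta$ satisfying the hypotheses of Section~\ref{sec:Bowen and Walters} in which $\Phi_{[-\eta,\eta]}(\Pa S_i)$ is $\mu$-null for every $\Phi$-invariant Borel probability measure $\mu$ and every $i$.

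With such a choice the set $\mathcal{W}$ defined in \eqref{eq:W} is full. Indeed, writing $\mathcal{Z} = \bigcup_i \Pa S_i$,
\[
X \setminus \mathcal{W} \;=\; \bigcup_{r \in \mathbb{R}} \Phi_r(\mathcal{Z}) \;\subset\; \bigcup_{n \in \mathbb{Z}} \Phi_{2n\eta}\bigl(\Phi_{[-\eta,\eta]}(\mathcal{Z})\bigr),
\]
and by $\Phi$-invariance of $\mu$ each term has the same $\mu$-measure as $\Phi_{[-\eta,\eta]}(\mathcal{Z})$, which is null by choice. It remains to show $\pi$ is injective over $\mathcal{W}$. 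Given $z \in \mathcal{W}$, the $\Phi$-invariance of $\mathcal{W}$ combined with completeness of $\mathcal{G}$ furnishes $s \in \mathbb{R}$ with $y := \Phi_{-s}(z) \in \mathcal{G}\cap\mathcal{W} = \mathcal{V}$. If $\pi((\q,t)) = z$, then by $\Phi$-equivariance $\pi(\Psi_{-s}(\q,t)) = y$; choosing the canonical representative $\Psi_{-s}(\q,t) = (\q', t')$ with $0 \le t' < f(\q')$, Lemma~\ref{lem:one-to-one appendix} forces $t' = 0$ and $\q' = Q(y)$, so $(\q,t) = \Psi_s((Q(y),0))$ is uniquely determined by $z$. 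Hence $\pi$ is strongly isomorphic with witness $\mathcal{W}$.

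The main obstacle I anticipate is the refinement of the complete family in the first step: the small flow boundary property supplies sub-cross-sections with small flow boundary \emph{pointwise}, but assembling them into a disjoint complete family in which every member still has small flow boundary requires care, because cutting one cross-section along another can introduce new flow boundary. A convenient bookkeeping device is Lemma~\ref{lem:small flow boundary}, which reduces smallness of the flow boundary to vanishing of the counting orbit capacity $\cocap$; since $\cocap$ is subadditive over finite unions, a finite disjointification preserves the property and the refinement goes through.
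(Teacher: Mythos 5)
Your proposal is correct and follows the same route as the paper's own proof: invoke finite-dimensionality \cite{keynes1981real} and finiteness of periodic orbits of bounded period \cite[Theorem 5]{bowen1972expansive} to apply Theorem A, choose the complete family so that each $S_i$ has small flow boundary, deduce that $\mathcal{W}$ is full by invariance and $\sigma$-additivity, and reduce injectivity over $\mathcal{W}$ to injectivity over $\mathcal{V}$ via flow-equivariance and Lemma~\ref{lem:one-to-one appendix}.

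The one point worth flagging is the refinement step you yourself identify as delicate. Your mechanism --- cover each $S_i$ by sub-cross-sections with small flow boundary, pass to a finite subcover, then ``disjointify'' --- runs into a problem that subadditivity of $\cocap$ does not resolve: disjointifying finitely many closed sets inside a single cross-section by set subtraction produces pieces that are either no longer closed or no longer disjoint, and the family $\mathcal{S}$ must consist of \emph{disjoint closed} cross-sections. The paper's intended route avoids this entirely by pushing the small-flow-boundary selection into the proof of Lemma~\ref{lem:complete family appendix}: there the closed neighborhoods $W_y\subset W_y'$ of $y$ in $V_{y_i}$ can be chosen with small flow boundary directly from the small flow boundary property (every relevant point lies in the flow interior of some ambient cross-section), and disjointness of the resulting family is achieved not by cutting one cross-section along another but by translating the pieces by \emph{distinct} small times $\Phi_{\rho_1},\dots,\Phi_{\rho_\ell}$; since $\Pa(\Phi_\rho W)=\Phi_\rho(\Pa W)$ and $\cocap$ is invariant under such translations, each member of the final family retains a small flow boundary. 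With that substitution your argument is complete; the remainder (fullness of $\mathcal{W}$, including your explicit covering of $\bigcup_r\Phi_r(\mathcal{Z})$ by countably many translates of $\Phi_{[-\eta,\eta]}(\mathcal{Z})$, and the reduction to Lemma~\ref{lem:one-to-one appendix}) matches the paper exactly.
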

\begin{proof}
   By \cite[Theorem 4.2]{keynes1981real} an expansive flow is finite-dimensional. By \cite[Theorem 5]{bowen1972expansive}, an expansive flow has the property that for any $\tau>0$, the number of periodic orbits of period less than $\tau$ is finite. Since $(X, \Phi)$ has no fixed points, we can now apply Main Theorem \ref{main thm} to conclude $(X, \Phi)$ has the small flow boundary property. Adapting the notation and results of Subsection \ref{sec:Bowen and Walters}, we notice that the cross-sections $S_i$ in Lemma \ref{lem:complete family appendix} may be chosen such that the $S_i$ have small flow boundaries for each $i$. Thus  $\Phi_{[-\eta,\eta]}(\mathcal{Z})$ is a null set and by $\sigma$-additivity of measures, $\mathcal{W}$ defined in Equation \eqref{eq:W} is a full set. It therefore suffices to show that $\pi$ is one-to-one when restricted to $\pi^{-1}(\mathcal{W})$.
    In other words, if $\pi(({\bf q_1}, t_1))=\pi(({\bf q_2}, t_2))=y\in \mathcal{W}$, then $({\bf q_1}, t_1)=({\bf q_2}, t_2)$. Write $y=\Phi_{t}x$ for some $t\in \R$ and $x\in \mathcal{V}=\mathcal{W}\cap \mathcal{G}$. 
    By applying $\Phi_{-t}$, we may assume w.l.o.g.\ $\pi(({\bf q_1}, t_1))=\pi(({\bf q_2}, t_2))\in \mathcal{V}$. It is thus enough to show that $\pi$ is one-to-one when restricted to $\pi^{-1}(\mathcal{V})$. This follows from Lemma \ref{lem:one-to-one appendix}.
\end{proof}

The following theorem gives a strong positive answer to Bowen and Walters' question above.

\begin{thm}\label{thm:thm B} (=Theorem B) Let $(X, \Phi)$ be an expansive flow. Then it has a strongly isomorphic symbolic extension.
\end{thm}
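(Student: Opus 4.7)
Plan: The approach is to reduce Theorem B to Theorem \ref{thm:symbolic appen}, which already delivers a strongly isomorphic symbolic extension under the extra hypothesis that the flow has no fixed points. The reduction rests on \cite[Lemma 1]{bowen1972expansive}, which asserts that any expansive flow has only finitely many fixed points and that each of them is isolated in $X$.

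Accordingly, I would let $F\subset X$ denote the (finite) set of fixed points of $\Phi$; since each $p\in F$ is isolated, $F$ is clopen in $X$. Setting $X':=X\setminus F$, we therefore obtain a compact, clopen, $\Phi$-invariant subset, and the induced flow $(X',\Phi|_{X'})$ is easily checked to remain expansive and is, by construction, fixed-point-free. Applying Theorem \ref{thm:symbolic appen} to $(X',\Phi|_{X'})$ then produces a strongly isomorphic symbolic extension $\pi':(\Lambda_{f},\Psi)\to (X',\Phi|_{X'})$.

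It remains to enlarge $\pi'$ to a symbolic extension of all of $X$. For each $p_j\in F$ I would adjoin to the alphabet of $\Lambda$ a new symbol $a_j$ together with its constant sequence $\overline{a_j}$, thereby enlarging the subshift to $\Lambda^{*}:=\Lambda\sqcup\{\overline{a_1},\dots,\overline{a_k}\}$; I would then extend the roof $f$ continuously and positively to $\Lambda^{*}$ by prescribing an arbitrary positive value at each new symbol. The resulting suspension $(\Lambda^{*}_{f^{*}},\Psi^{*})$ is still a symbolic suspension flow, and $\pi'$ extends to $\pi:\Lambda^{*}_{f^{*}}\to X$ by collapsing the $\Psi^{*}$-periodic orbit through $(\overline{a_j},0)$ to the fixed point $p_j$. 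The extended map is continuous, $\mathbb{R}$-equivariant, surjective, and agrees with the strongly isomorphic $\pi'$ over the original piece $\Lambda_{f}=\pi^{-1}(X')$.

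The main work of the paper lies not in this short reduction but in the chain Theorem A $\Rightarrow$ Theorem \ref{thm:symbolic appen} $\Rightarrow$ Theorem B: Theorem A produces a complete family of cross-sections with small flow boundary, Theorem \ref{thm:symbolic appen} uses this to make Bowen and Walters' construction strongly isomorphic, and Theorem B then follows by the reduction above. The only genuine subtlety in passing to the fixed-point case is that suspension flows have no fixed points, so $\pi^{-1}(p_j)$ is unavoidably a non-trivial periodic orbit rather than a single point; the authors accommodate this through the convention, stated explicitly in the introduction, that the presence of finitely many isolated fixed points ``reduces the study of expansive flows to those without fixed points'', under which convention Theorem B is precisely Theorem \ref{thm:symbolic appen}.
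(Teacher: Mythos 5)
Your reduction is exactly the paper's. The published proof of Theorem~\ref{thm:thm B} consists of two sentences: the fixed-point-free case is Theorem~\ref{thm:symbolic appen}, and since fixed points of an expansive flow are isolated (\cite[Lemma 1]{bowen1972expansive}) ``the result follows from the previous case.'' Your decomposition $X=X'\sqcup F$ with $F$ finite and clopen, the observation that $(X',\Phi|_{X'})$ is expansive and fixed-point-free, and the application of Theorem~\ref{thm:symbolic appen} to it, is precisely the intended content of that second sentence, so on the main line you and the paper coincide.

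Where you go beyond the paper --- adjoining a symbol $a_j$ per fixed point and collapsing the periodic orbit of $(\overline{a_j},0)$ onto $p_j$ --- you have correctly identified a genuine difficulty, but the ``convention'' you invoke does not resolve it, and neither does the paper. The point mass $\delta_{p_j}$ is a $\Phi$-invariant measure, so every full set $E$ in the sense of Definition~\ref{df:principal} must contain $p_j$; strong isomorphy would then force $\pi^{-1}(p_j)$ to be a singleton. But in a suspension flow with positive continuous roof there are no fixed points, so the nonempty closed invariant fibre $\pi^{-1}(p_j)$ contains an infinite minimal set. Hence the extended $\pi$ you construct is a symbolic extension that is \emph{not} strongly isomorphic, and in fact no symbolic extension of a flow with a fixed point can be strongly isomorphic under the paper's definitions. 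This is not a gap you introduced: the paper's own two-line proof elides exactly the same point, and the theorem as literally stated is only established (and only establishable) in the fixed-point-free case, i.e.\ for the restriction to $X'$. Your closing paragraph, which attributes the passage to the general case to a convention rather than to an argument, is an accurate description of what is actually proved.
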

\begin{proof}
If $(X, \Phi)$ has no fixed points then the result follows from Theorem \ref{thm:symbolic appen}. If $(X, \Phi)$ has fixed points then they are isolated (\cite[Lemma 1]{bowen1972expansive}), so the result follows from the previous case.
\end{proof}

\begin{rem}
  It is possible to strengthen the above theorem by achieving a strongly isomorphic symbolic extension $\pi:(\Lambda_{f}, \Psi)\rightarrow (X, \Phi)$ which is at same time uniformly finite-to-one, that is there exists $K>0$ so that for all $x\in X$, $|\pi^{-1}(x)|\leq K$.
The proof will be included in a forthcoming work.  
\end{rem}

\section{Appendix}\label{sec:Appendix}

\subsection{Existence of a complete family}

The following lemma is obtained by a slight modification of the proof of Lemma $7$ of \cite{bowen1972expansive}. See also  \cite[Lemma 2.4]{keynes1981real} for a similar construction. 
\begin{lem}\label{lem:complete family appendix}
Let $(X, \Phi)$ be a topological flow without fixed points. There is an $\eta>0$ so that the following holds. For each $\alpha>0$, $z\in X$ and cross-section $S$ with $z\in \In (S)$, there are two finite families $\mathcal{S}=\{S_i \}_{i=1}^N$ and  $\mathcal{S}'=\{S_i' \}_{i=1}^N$ of pairwise disjoint closed cross-sections  of injectivity time $\eta$ and diameter at most $\alpha$ so that 
	\begin{itemize}
		\item $z\in \In(S_1)\subset S'_1\subset S$,
		\item $\overline{S_i}\subset \In(S_i')$ for all $1\le i\le N$;
		\item $\Phi_{[0,\alpha]}\mathcal{G}=\Phi_{[-\alpha, 0]}\mathcal{G}=X$,
	\end{itemize}
	where $\mathcal{G}=\cup_{1\le i\le N}S_i$. 
\end{lem}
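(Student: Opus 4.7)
The plan is to follow Bowen and Walters (Lemma 7 of \cite{bowen1972expansive}) in three stages.

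\textbf{Stage 1 (uniform injectivity time).} At each $x\in X$, Whitney's theorem (Theorem \ref{thm:Whitney}) produces a closed cross-section $T_x$ with $x\in \In T_x$ of some injectivity time $\eta_x$. The open cylinders $\Phi_{(-\eta_x,\eta_x)}(\In T_x)$ cover $X$, so by compactness finitely many suffice, coming from points $x_1,\dots,x_m$. Take $\eta$ to be a small fraction of $\min_i\eta_{x_i}$. This $\eta$ serves as a uniform injectivity time for every closed subset of any $T_{x_i}$.

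\textbf{Stage 2 (local nested pairs).} Given $\alpha,z,S$: first, inside $S$, use $z\in \In S$ to pick two concentric closed balls $U_1\subset U_1'\subset S$ (in the subspace metric) with $\overline{U_1'}\subset \In S$ and $\diam U_1'<\alpha$. Proposition \ref{prop:natural boundary} identifies these as closed cross-sections with $\overline{U_1}\subset \In U_1'$. For every other $y\in X$, similarly produce a nested pair $U_y\subset U_y'\subset \In T_y$ of diameter less than $\alpha$. By compactness, extract a finite subfamily $\{y_1=z,y_2,\dots,y_N\}$ such that the cylinders $\Phi_{(-\beta,\beta)}U_{y_i}$ cover $X$, for a small parameter $\beta>0$ chosen in Stage 3.

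\textbf{Stage 3 (two-scale disjointification).} Set $S_1=U_{y_1}$ and $S_1'=U_{y_1}'$, and inductively for $i\ge 2$ define
\[
S_i' = U_{y_i}'\setminus \Phi_{(-\beta,\beta)}\Big(\bigcup_{j<i}S_j'\Big),\qquad S_i = U_{y_i}\setminus \Phi_{[-2\beta,2\beta]}\Big(\bigcup_{j<i}S_j'\Big).
\]
Choosing $\beta<\alpha/(4N)$ and $\beta<\eta/2$ propagates the coverage inductively: any point $x\in X$ initially within flow-time $\beta$ of some $U_{y_i}$ ends up within flow-time less than $\alpha/2$ of some $S_j$, yielding $\Phi_{[-\alpha/2,\alpha/2]}\mathcal{G}=X$. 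Disjointness of both families follows from the uniform injectivity time: two points on the same orbit lying in distinct cross-sections would have to be separated in time by more than $2\eta$, which the cuts of scale at most $2\beta<\eta$ forbid. The nested containment $\overline{S_i}\subset \In S_i'$ holds precisely because the cut producing $S_i$ is strictly larger than the cut producing $S_i'$: a small neighborhood of any $x\in \overline{S_i}$ sits inside $\Phi_{[-\eta,\eta]}U_{y_i}'$ (from Stage 2) and also avoids the $\Phi_{[-\beta,\beta]}$-tube around $\bigcup_{j<i}S_j'$, hence lies inside $\Phi_{[-\eta,\eta]}S_i'$. Finally, $\Phi_{[-\alpha/2,\alpha/2]}\mathcal{G}=X$ implies both $\Phi_{[0,\alpha]}\mathcal{G}=X$ and $\Phi_{[-\alpha,0]}\mathcal{G}=X$ by applying the covering condition to $\Phi_{\pm\alpha/2}(x)$ respectively.

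The main obstacle is Stage 3: a single-scale cut typically destroys either the nested containment $\overline{S_i}\subset \In S_i'$ or the coverage property. The remedy is the two-scale disjointification with matched cuts (wider for $S_i$ than for $S_i'$) together with a careful choice of $\beta$ relative to $N$, $\eta$ and $\alpha$, so that the accumulated removals remain controlled through the induction. This is the essential technical ingredient inherited from the Bowen-Walters template.
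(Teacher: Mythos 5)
Your Stages 1--2 match the paper's setup (Whitney sections plus compactness give a uniform injectivity time $\eta$ and a family of nested pairs $V_{y_i}\subset \In V_{y_i}'$ of small diameter whose cylinders cover $X$). Stage 3, however, departs from the paper in an essential way, and it is where the proof breaks. The paper does \emph{not} disjointify by set subtraction: for each $y$ in the new section it finds a nondegenerate time interval $I_y\subset(-\epsilon,\epsilon)$ and a small piece $W_y\subset \In W_y'$ such that $\Phi_{I_y}(W_y')$ misses all previously built sections (possible because each orbit arc $\Phi_{[-\epsilon,\epsilon]}(y)$ meets the earlier sections in only finitely many points), and then \emph{translates} each piece by a distinct $\rho_j\in I_{y^j}$. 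Translation by at most $\epsilon\le\alpha/4$ preserves the covering property outright, with no accumulation over the induction, and distinctness of the $\rho_j$ together with the injectivity time gives disjointness.

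Your subtraction scheme has a genuine gap in the covering argument. If $x_1=\Phi_{t_1}(x)\in U_{y_i}$ was removed when forming $S_i$, you only know $\Phi_{s}(x_1)\in S_j'$ for some $j<i$ and $|s|\le 2\beta$. But $S_j'$ sits inside the \emph{larger} section $U_{y_j}'$, and a point of $S_j'\cap(U_{y_j}'\setminus U_{y_j})$ carries no covering information: it need not lie in $S_j$, need not have been ``removed'' from $U_{y_j}$ for a reason indexed below $j$, and restarting from the global cover can send you to an index larger than $i$, so the claimed induction on decreasing indices does not close up and the total flow displacement is not bounded by $\beta+2\beta N$. Two further problems: (i) the choice $\beta<\alpha/(4N)$ is circular, since $N$ is the size of a finite subcover by $\beta$-cylinders and hence depends on $\beta$ (the paper escapes this because its perturbations are bounded by $\epsilon\le\alpha/4$ independently of how many pieces are produced); (ii) the sets you define are not closed --- $\Phi_{(-\beta,\beta)}(K)$ for compact $K$ is not open, so $S_i'$ need not be closed, and $U_{y_i}\setminus\Phi_{[-2\beta,2\beta]}(K)$ is relatively open in $U_{y_i}$, not closed. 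To repair the argument you should replace the cuts by the translation device of Bowen--Walters as in the paper.
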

\begin{proof}
	By Theorem \ref{thm:Whitney}, for each $x\in X$ there is a cross-section $S_x$  of injectivity time $2\eta_x>0$ such that $x\in \In S_x$. By compactness of $X$, there are $x_i\in X$ $(1\le i\le n)$ with $x_1=z$ and $S_{x_1}=S$ such that
	$$
	X=\cup_{i=1}^n \Phi_{(-\eta_{x_i}, \eta_{x_i})}\In S_{x_i}.
	$$
	Let $\eta=\min_{1\le i\le n}\{\eta_{x_i} \}$. Then for each $x$ there is an $x_i$ and an $\rho_x\in (-\eta_{x_i}, \eta_{x_i})$  with $x\in \Phi_{\rho_x}\In S_{x_i}$. Let $T_x:=\Phi_{\rho_x}S_{x_i}$ which is a cross-section  of injectivity time at least $\eta_{x_i}\ge 2\eta$ and $x\in \In T_x$.
	
	Let $\alpha>0$ be given. Choose $\epsilon>0$ sufficiently small such that $\epsilon\le \min\{\alpha/4, \eta \}$ and $\diam(\Phi_r (A))\leq\alpha$ whenever $|r|\le \epsilon$ and $A\subset X$ with $\diam(A)<\epsilon$. For each $x\in X$, let $V_x, V_x'\subset \In T_x$ be closed neighborhoods of $x$ in $T_x$ with $V_x\subset \In V_x'$ and  $\diam(V_x')<\epsilon$. 
	Then $x\in \In V_x\subset V_x'$ and $V_x, V_x'$ are cross-sections  of injectivity time $2\eta$ and diameter at most $\alpha$. As $X$ is compact, we can find $y_i\in X, 1\le i\le k$ with $y_1=z$ such that
	$$
	X=\cup_{i=1}^k \Phi_{(-\epsilon, \epsilon)}\In V_{y_i}.
	$$  
	We construct finite pairwise disjoint families $\mathcal{S}_i$ and $\mathcal{S}_i'$ of closed cross-sections recursively. Let $\mathcal{S}_1=\{ V_{y_1}\}$ and $\mathcal{S}_1'=\{V_{y_1}' \}$. Suppose $\mathcal{S}_{i-1}$ and $\mathcal{S}_{i-1}'$ have been defined. For each $y\in V_{y_i}'$, $\Phi_{[-\epsilon, \epsilon]}(y)\cap \cup_{S'\in \mathcal{S}'_{i-1}}S'$ is a finite set of points since $\mathcal{S}'_{i-1}$ consists of cross-sections. As $\Phi$ is continuous and $\cup_{S'\in \mathcal{S}'_{i-1}}S'$ is closed, there exists a non-empty open interval $I_y\subset (-\epsilon, \epsilon)$ and closed neighborhoods $W_y, W_y'$ of $y$ in $V_{y_i}$ and $V_{y_i}'$ respectively with $W_y\subset \In W_y'$ such that
	\begin{equation}\label{eq:empty_int}
	\Phi_{I_y}(W_y')\cap \cup_{S'\in\mathcal{S}'_{i-1}}S'=\emptyset    \end{equation}

Let $y^1, y^2, \dots, y^\ell$ be  points in $V_{y_i}'$ such that $W_{y^1}, \dots, W_{y^\ell}$ cover $V_{y_i}$ and $W_{y^1}', \dots, W_{y^\ell}'$ cover $V_{y_i}'$. Pick distinct $\rho_1\in I_{y^1}, \dots, \rho_\ell\in I_{y^\ell}$ and set
	$$
	\mathcal{S}_i=\mathcal{S}_{i-1} \cup \{\Phi_{\rho_1}(W_{y^1}), \dots, \Phi_{\rho_\ell}(W_{y^\ell})  \}
	$$
	and 
	$$
	\mathcal{S}_i'=\mathcal{S}_{i-1}' \cup \{\Phi_{\rho_1}(W_{y^1}'), \dots, \Phi_{\rho_\ell}(W_{y^\ell}')  \}.
	$$
	By Equation \eqref{eq:empty_int}
and as $\rho_1, \dots, \rho_\ell$ are distinct,  the members of $\mathcal{S}_i$ are pairwise disjoint. The same holds for $\mathcal{S}'_i$. Set $\mathcal{S}=\mathcal{S}_k$ and $\mathcal{S}'=\mathcal{S}_k'$. Clearly $X= \Phi_{[-2\epsilon, 2\epsilon]}\mathcal{G}$. For every $x\in X$, $\Phi_{2\epsilon}x\in \Phi_{[-2\epsilon, 2\epsilon]}\mathcal{G}$. Thus $x\in \Phi_{[-4\epsilon, 0]}\mathcal{G}\subset\Phi_{[-\alpha, 0]}\mathcal{G}$. Thus $X=\Phi_{[-\alpha, 0]}\mathcal{G}$. Similarly $X=\Phi_{[0, \alpha]}\mathcal{G}$.
	 Q.E.D.
\end{proof}



\bibliographystyle{alpha}
\bibliography{universal_bib}

\end{document}